\documentclass[11pt,a4paper]{article}

\usepackage{amscd,amsmath}
\usepackage{amsfonts}
\usepackage{amssymb,amsthm}
\usepackage{longtable}
\usepackage{graphicx}
\usepackage{hyperref}
\usepackage{fancyvrb}

\topmargin=-5mm
\evensidemargin=0cm
\oddsidemargin=0cm
\textwidth=16cm
\textheight=22cm

\usepackage{url}
\usepackage{fancyheadings}

\theoremstyle{definition}
\newtheorem{Thm}{Theorem}[section]
\newtheorem{Def}{Definition}[section]
\newtheorem{Lem}[Thm]{Lemma}
\newtheorem{Cor}[Thm]{Corollary}
\newtheorem{prop}[Thm]{Proposition}

\newtheorem{Rem}{Remark}[section]

\newtheorem*{Thm 4.3.}{Theorem 4.3}
\newtheorem*{prop 4.4.}{Proposition 4.4}
\newtheorem*{prop 4.5.}{Proposition 4.5}
\newtheorem*{Lem 4.6.}{Lemma 4.6}
\newtheorem*{Lem 4.7.}{Lemma 4.7}
\newtheorem*{Lem 4.8.}{Lemma 4.8}
\newtheorem*{Lem 4.9.}{Lemma 4.9}
\newtheorem*{prop 4.10.}{Proposition 4.10}
\newtheorem*{prop 4.11.}{Proposition 4.11}

\begin{document}


\title{Fiberwise convexity of Hill's lunar problem}
\author{\bf{Junyoung Lee}
\\ sink21@snu.ac.kr \\ Department of Mathematics and Research Institute of Mathematics \\ Seoul National University}
\date{}
\maketitle


\begin{abstract}
In this paper, we prove the fiberwise convexity of the regularized Hill's lunar problem below the critical energy level. This allows us to see Hill's lunar problem of any energy level below the critical value as the Legendre transformation of a geodesic problem on $S^2$ with a family of Finsler metrics. Therefore the compactified energy hypersurfaces below the critical energy level have the unique tight contact structure on $\mathbb{R} P^3$. Also one can apply the systolic inequality of Finsler geometry to the regularized Hill's lunar problem.
\end{abstract}

\section{Introduction}
\label{intro}

Studying the motion of the moon has been a challenging problem for a long time. If we consider only the sun, the earth and the moon, this problem is the three body problem. One can agree that the three body problem is one of the hardest problems in classical mechanics. For this reason, many researchers have studied this with some restrictions which depend on the situation of the problem. One problem with reasonable and practical restrictions is the (circular planar) restricted three body problem. The restricted three body problem is obtained by assuming that two primary particles $P_1, P_2$ follow Keplerian circular motion and one massless particle $S$ does not influence these primaries. Namely, the masses of the particles have the relation $M_1, M_2>>m$ where $M_1, M_2$ are the masses of the two primaries $P_1, P_2$ respectively and $m$ is the mass of $S$. One can study the motion of the moon in this set-up. However the lunar theory is a limit case of the restricted three body problem since the sun is much heavier than the others and the distance between the sun and the earth is much longer than the distance between the earth and the moon.\footnote{Precisely, ${M_{Sun} \over M_{Earth}} \sim 333000$, ${M_{Earth} \over M_{Moon}} \sim 81.3$ and ${|P_{Sun}-P_{Earth}| \over |P_{Earth}-P_{Moon}|} \sim 388$}
One suggestive formulation for this situation was given by Hill in \cite{Hill}. Hill introduced this lunar problem in 1878 to study the stability of the orbit of the moon. This can be obtained by taking the limit for $\mu:={M_2 \over {M_1 + M_2}}$ in the restricted three body problem. If we take only $\mu \rightarrow 0$ on the restricted three body problem, then we get the so-called rotating Kepler problem. This has played an important role  as an ingredient to understand the restricted three body problem. On the other hand, Hill derived an approximation of the three body problem by considering the previously stated feature of the Sun-Earth-Moon. Before Hill the Sun-Earth-Moon system was regarded as a system of two uncoupled Kepler problems, namely Sun-Earth and Earth-Moon. This turns out to be a poor approximation. Hill introduced a simple model that treats the entire system by zooming in on the earth. In modern language, Hill's idea can be understood by taking a blow up of the coordinates near the earth to the power ${1 \over 3}$ of $\mu$ when one takes $\mu \rightarrow 0$. We will explain this procedure in section \ref{R3BP}. After Hill gave a new formulation for the lunar theory, many researchers have used Hill's lunar problem to get accurate motion of the moon. 

An important feature on studying Hamiltonian systems is the existence of integrals of the system. Integrals of given Hamiltonian system make this problem easier. The extreme case is a completely integrable system. A Hamiltonian system is called completely integrable if it possesses the maximal number of Poisson commuting independent integrals. If the dimension of the phase space is $2n$, then the maximal possible number of Poisson commuting independent integrals is $n$. In Hill's lunar problem case the integrability is equivalent to the existence of a second integral, which is independent of the first integral. The non-integrability of Hill's lunar problem has been determined by some authors.  Meletlidou, Ichtiaroglou and Winterberg in \cite{MIW} proved the analytic non-integrability of Hill's lunar problem. Morales-Ruiz, Sim\'o and Simon gave an algebraic proof of meromorphic non-integrability in \cite{MSS}. Recently, Llibre and Roberto in \cite{LR} discussed the $C^1$ integrability. With these results about non-integrability, Sim$\acute{o}$ and Stuchi in their numerical research \cite{SS} pointed out the importance of 'numerical methods guided by the geometry of dynamical systems' by observing the chaotic behavior in the Levi-Civita regularization of Hill's lunar problem. We show in this paper that the Hamiltonian flow of Hill's lunar problem can be interpreted as a Finsler flow. This will provide a geometric feature of Hill's lunar problem.

One difficulty in the study of this problem comes from collision. Namely, this problem has a singularity at the origin. However, two body collision can be regularized. One way to regularize this problem is Moser regularization. Moser introduced this regularization for the Kepler problem in \cite{Moser}. In this paper, he tells us that the Hamiltonian flow of the Kepler problem can be interpreted as a geodesic flow on the 2-sphere endowed with its standard metric by interchanging the roles of position and momentum. We will discuss this relation in section \ref{Moser reg}. If one replaces the standard metric by a Finsler metric, then this idea can be applied to other problems which admit two body collisions. To get a Finsler metric one needs fiberwise convexity. One recent result using this is given in \cite{CFvK}. They prove that the rotating Kepler problem is fiberwise convex and so can be regarded as the Legendre transformation of the 2-sphere endowed with a Finsler metric. As in the rotating Kepler problem, one can ask whether Hill's lunar problem has also this property or not. The main theorem of this paper is the following.

\begin{Thm}
\label{main theorem}
The bounded components of the regularized Hill's lunar problem are fiberwise convex for the energy level below the critical value.
\end{Thm}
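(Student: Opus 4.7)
The plan is to adapt the strategy that Cieliebak, Frauenfelder and van Koert used in \cite{CFvK} for the rotating Kepler problem, with the main new difficulty being Hill's anisotropic tidal term.

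First I would write down Hill's lunar Hamiltonian $H(q,p)$ explicitly in rotating coordinates, compute the associated effective potential, and describe the bounded component of the Hill region (the projection to configuration space of the bounded component of $H^{-1}(c)$) for energies $c$ strictly below the critical value. This region contains the earth at the origin, is cut out by a smooth piece of the critical equipotential curve, and is bounded away from the Lagrange points $L_1, L_2$. Away from the collision, $H$ is quadratic in $p$ with Hessian equal to the identity, so in the original coordinates the sublevel sets in each cotangent fiber are round disks and fiberwise convexity is trivial; the entire point of the regularization is to resolve the Keplerian singularity at the origin without destroying this convexity.

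Next I would perform Moser regularization: replace $H - c$ by $f = |q|(H-c)$ to absorb the $1/|q|$ singularity, swap position and momentum via $(q,p)\mapsto(-p,q)$, and apply stereographic projection to compactify the new base variable to $S^2$. This turns the bounded component of the energy level into a smooth compact hypersurface $\Sigma_c \subset T^*S^2$ bounding a domain $D_c$, with the Kepler collision corresponding to the north pole of $S^2$. Fiberwise convexity of $\Sigma_c$ then reduces to showing that a natural defining function of $D_c$ in fiber coordinates has positive definite Hessian at every boundary point. After the coordinate swap this Hessian splits into contributions coming from the Kepler, Coriolis and tidal terms: the Kepler part is manifestly positive, while the Coriolis and tidal pieces give indefinite quadratic forms that must be dominated by it.

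The main obstacle is precisely this domination estimate. The subcritical-energy hypothesis restricts the corresponding position $q$ to the bounded component of the Hill region, yielding an upper bound on $|q|$ strictly smaller than the distance to the Lagrange points; I expect this bound to be exactly what is needed to absorb the Coriolis and tidal contributions into the Kepler part. I would carry out the computation in two charts: the original chart covering all of $T^*S^2$ except the north pole, where one works directly with $q$ and $p$ and uses the $|q|$-bound to conclude positive definiteness, and a local stereographic chart around the north pole (the collision locus), where $1/|q|$ becomes a smooth function whose Taylor expansion produces a large positive Hessian contribution that easily swallows the perturbation. Combining the two charts yields strict fiberwise convexity of $\Sigma_c$ everywhere.
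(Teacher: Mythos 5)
Your setup (Moser regularization $K_c=|q|(H+c)$, the swap $(q,p)\mapsto(-p,q)$, stereographic compactification of the new base to $S^2$, and verification that the fiber at the north pole is convex by a Kepler-type limit) matches the paper's, but the core of your argument — "showing that a natural defining function of $D_c$ in fiber coordinates has positive definite Hessian at every boundary point" — cannot work, and the paper explicitly points this out. After the swap, the fiber coordinate is $q$, and the Hessian in $q$ of the defining function $H_{c,p}(q)$ equals
$$\mathcal{H}(q)={1 \over |q|^5} \begin{pmatrix} -2|q|^5+|q|^2-3q_1^2 & -3q_1 q_2 \\ -3q_1 q_2 & |q|^5+|q|^2-3q_2^2 \end{pmatrix},$$
which the paper shows has $\det\mathcal{H}<0$ throughout the Hill region, hence one positive and one negative eigenvalue. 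There is no "domination" of the indefinite tidal contribution by a positive Kepler part: the Hessian of $-1/|q|$ in $q$ is itself indefinite (eigenvalues $1/|q|^3$ and $-2/|q|^3$), and the Coriolis term $p^t Jq$ is linear in $q$, so it contributes nothing to the Hessian at all — so even the decomposition of the Hessian "into Kepler, Coriolis and tidal terms" misidentifies where the difficulty lies.

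What the paper must do instead is prove positivity of the \emph{tangential} Hessian, i.e.\ evaluate $\mathcal{H}(q)$ on the single tangent direction $J\nabla H_{c,p}(q)$ of the level curve, and show $(J\nabla H_{c,p}(q))^t\,\mathcal{H}(q)\,(J\nabla H_{c,p}(q))>0$ subject to the constraint $H_{c,p}(q)=0$. The $p$-dependence enters only through the tangent vector, not the Hessian, and the interplay between the two is what makes the problem hard: near the critical point the tangent vector degenerates at the same rate as the available radius of the $s$-disk, which is why the paper needs a four-dimensional parameter reduction, a supporting-tangent-line trick (Lemmas 4.6–4.9, Propositions 4.10–4.11), a blow-up of the corner at $(3^{-1/3},0)$, and computer-verified estimates. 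Your phrase "I expect this bound to be exactly what is needed to absorb the Coriolis and tidal contributions" skips over this entire technical core; as written, the step you hope to be routine would fail at the first line, since the quadratic form you propose to show positive definite is genuinely indefinite.
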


To understand the meaning of fiberwise convexity below the critical value, we need to see the Hamiltonian of Hill's lunar problem.
$$H_{HLP} : \mathbb{R}^2 \times (\mathbb{R}^2- \{(0, 0) \}) \rightarrow \mathbb{R},$$
$$H_{HLP} (q, p)={1\over 2}|p|^2-{1 \over |q|}-q_1^2+{1 \over 2}q_2^2+p_1q_2-p_2q_1$$
Here $q$ is the position variable and $p$ is the momentum variable. This Hamiltonian has one critical value. We can introduce the effective potential
$$U(q_1, q_2):=-{1 \over \sqrt{q_1^2+q_2^2}}-{3\over2}q_1^2$$
in order to see the critical points easily. In fact, we can write the Hamiltonian
$$H_{HLP}(q, p)={1 \over 2}((p_1+q_2)^2+(p_2-q_1)^2)+U(q_1, q_2)$$
using the effective potential. Since the other term is of degree 2, the critical points of $H_{HLP}$ correspond to the critical points of $U$. It means the correspondence
$$\pi (Crit (H_{HLP}))=Crit(U)$$
where $\pi$ is the projection to the $q$-coordinate. We can compute the critical points and the critical value of the effective potential $U$
$$Crit(U)=(\pm 3^{-1 \over 3}, 0), \quad U(\pm 3^{-1 \over 3}, 0)=-{{3^{4 \over 3}} \over 2}=:-c_0.$$
Then one can get the critical point of $H_{HLP}$
$$Crit(H_{HLP})=\{ (3^{-1 \over 3}, 0, 0, 3^{-1 \over 3}), (-3^{-1 \over 3}, 0, 0, -3^{-1 \over 3}) \}  \in \mathbb{R}^2 (q) \times \mathbb{R}^2 (p)$$
and the critical value
$$H_{HLP}((\pm 3^{-1 \over 3}, 0, 0, \pm 3^{-1 \over 3}))=-c_0.$$
We are interested in the energy level below this critical value $-c_0$ in order to prove the Theorem \ref{main theorem}. Thus we will assume for the energy level that
$$-c<-c_0 \iff c>c_0={{3^{4 \over 3}} \over 2}$$
throughout this paper. With these $c$, we define the Hamiltonians $K_c$ 
$$K_c(q, p)=|q|(H_{HLP}(q, p)+c)$$
for the regularization of this problem. It will be proven in section \ref{interpretation} that $\pi(K^{-1}_c(0))$, the projection of the zero level set of $K_c$ to the $q$-coordinate, has one bounded component and two unbounded components. Let us denote by $\Sigma_c$ the component of $K^{-1}_c(0)$ which projects to the bounded component. Namely $\Sigma_c$ is a connected component of $K^{-1}_c(0)$ and $\pi(\Sigma_c)$ is bounded. By the symplectomorphism $(q, p) \rightarrow (p, -q)$, we can think of $p$ as a position variable and of $q$ as a momentum variable. In this situation $p$ can be regarded as a value in $\mathbb{C}$ and so $\Sigma_c \subset T^* \mathbb{C}$. We can regard $T^* \mathbb{C}$ as a subset of $T^* S^2$ by the one point compactification of $\mathbb{C}$. Then we can think of $\Sigma_c$ as a subset of $T^*S^2$ using the stereographic projection. In this situation Theorem \ref{main theorem} can be rephrased as follows.

\begin{itemize}
\item \hypertarget{F1}{$\bold{(F1)}$} \label{F1} The closure $\overline { { \Sigma  }_{ c } }$ of $\Sigma_c$ in $T^* S^2$ is a submanifold of $T^* S^2$ for all $c>c_0$.

\item \hypertarget{F2}{$\bold{(F2)}$} For any fixed $p \in S^2$ and $c>c_0$, $\overline { { \Sigma  }_{ c }} \cap T^*_p S^2$ bounds a convex region which contains the origin in the cotangent plane $T^*_p S^2$.
\end{itemize}
In short, the connected component $\Sigma_{c}$ of the energy hypersurface $H_{HLP}^{-1}(-c)$ with energy $-c<-c_0$ can be symplectically embedded into $T^{*} S^2$ as a fiberwise convex hypersurface after compactification. By proving the above statements \hyperlink{F1}{$\bold{(F1)}$}, \hyperlink{F2}{$\bold{(F2)}$}, we can show that the regularized Hill's lunar problem can be regarded as Legendre dual to a geodesic problem in $S^2$ with Finsler metric. With this definition of fiberwise convexity, we have one obvious Corollary of Theorem \ref{main theorem}. 

\begin{Cor}
The bounded component of the regularized Hill's lunar problem has a contact structure for the energy level below the critical value. Moreover, this contact structure is the unique tight contact structure on $\mathbb{R}P^3$ up to contact isotopy.
\end{Cor}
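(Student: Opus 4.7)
The plan is to identify the contact manifold $(\overline{\Sigma_c}, \xi_c)$ with the unit cotangent bundle of $S^2$ via the Liouville flow, and then to invoke Eliashberg's classification of tight contact structures on $\mathbb{R}P^3$. First, I would use \hyperlink{F2}{$\mathbf{(F2)}$}: fiberwise convexity with the origin in the interior of each fiber implies fiberwise starshapedness, so the standard Liouville vector field $Y$ on $T^*S^2$ (radial in the fiber direction) is transverse to $\overline{\Sigma_c}$ at every point. Combined with the smoothness asserted by \hyperlink{F1}{$\mathbf{(F1)}$} and the identity $\iota_Y\, d\lambda = \lambda$, this gives that $\alpha_c := \lambda|_{\overline{\Sigma_c}}$ is a contact form, and I set $\xi_c := \ker \alpha_c$.

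Next, I would use the Liouville flow $\phi_t$, which satisfies $\phi_t^* \lambda = e^t \lambda$, to produce a diffeomorphism from $\overline{\Sigma_c}$ to the unit cotangent bundle $ST^*S^2$ (taken with respect to any auxiliary Riemannian metric on $S^2$) by flowing each point of $\overline{\Sigma_c}$ until it first meets $ST^*S^2$; fiberwise starshapedness ensures that this flow time is well defined and smooth. The rescaling property of $\phi_t$ shows that this diffeomorphism is actually a contactomorphism between $(\overline{\Sigma_c}, \xi_c)$ and $(ST^*S^2, \xi_{\mathrm{std}})$, where $\xi_{\mathrm{std}} = \ker \lambda|_{ST^*S^2}$ is the canonical contact structure on the unit cotangent bundle.

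Finally, it is classical that $ST^*S^2$ is diffeomorphic to $SO(3) \cong \mathbb{R}P^3$, and $\xi_{\mathrm{std}}$ is tight: the disk bundle $D^*S^2$ is a Liouville filling of $(ST^*S^2, \xi_{\mathrm{std}})$, and Liouville fillability implies tightness by the Gromov--Eliashberg theorem. Eliashberg's classification then says that there is a unique tight contact structure on $\mathbb{R}P^3$ up to contact isotopy, so $\xi_c$ is isotopic to $\xi_{\mathrm{std}}$ and the corollary follows. The only step requiring real care is the smoothness of $\overline{\Sigma_c}$ near the point at infinity of $S^2$, which is exactly the content of \hyperlink{F1}{$\mathbf{(F1)}$}; beyond this, the corollary is a soft consequence of Theorem \ref{main theorem}.
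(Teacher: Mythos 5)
Your proof is correct and follows essentially the same route as the paper: fiberwise starshapedness gives a contact form via the restriction of the Liouville one-form, the hypersurface is diffeomorphic to $\mathbb{R}P^3$, fillability implies tightness by Gromov--Eliashberg, and Eliashberg's classification finishes the argument. The only cosmetic difference is that you first construct an explicit contactomorphism to $ST^*S^2$ via the Liouville flow and then quote fillability of the standard unit cotangent bundle, whereas the paper simply observes that $\overline{\Sigma_c}$ itself bounds a strong filling inside $T^*S^2$.
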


It is clear that fiberwise convexity implies fiberwise starshapedness with respect to the origin for all $T^*_p S^2$. This implies that the restriction of the Liouville 1-form on $T^* S^2$ to $\overline{\Sigma_c}$ gives a contact form. Since the unit sphere bundle $S^* S^2$ of $T^* S^2$ with respect to any Finsler metric is diffeomorphic to $\mathbb{R}P^3$ and the contact structure given by the Liouville 1-form is strongly fillable, we have the strongly fillable contact structure on $\overline{\Sigma_c}$ which has diffeomorphism type $\mathbb{R}P^3$. From the criterion due to Eliashberg and Gromov in \cite{Eli1} and \cite{Gro}, any symplectically fillable contact 3-manifold is tight. Moreover $\mathbb{R}P^3$ admits a unique tight contact structure up to isotopy by the result of Eliashberg \cite{Eli2}.

We will prove Theorem \ref{main theorem} in section \ref{preparation and strategy} and \ref{proof} . As one can see in section \ref{proof}, by the complexity of computation, it seems hard to take further computations about the corresponding geodesic problem in spite of our knowledge of the existence of a corresponding Finsler metric. However, this correspondence itself gives us information about closed characteristics.

\begin{Cor}
The Conley-Zehnder indices of the closed characteristics of the regularized Hill's lunar problem below the critical energy level is nonnegative.
\end{Cor}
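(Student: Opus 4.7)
The plan is to combine Theorem \ref{main theorem} with the now standard comparison between Conley-Zehnder indices of closed characteristics on fiberwise convex hypersurfaces in cotangent bundles and Morse indices of the Legendre-dual closed geodesics. By Theorem \ref{main theorem} together with properties (F1) and (F2), the compactified hypersurface $\overline{\Sigma_c}$ sits in $T^*S^2$ as a fiberwise convex hypersurface enclosing the zero section; hence Legendre duality exhibits $(\overline{\Sigma_c},\lambda|_{\overline{\Sigma_c}})$ as the unit cotangent bundle of some Finsler metric $F_c$ on $S^2$, and intertwines the Reeb flow on $\overline{\Sigma_c}$ with the (reparametrized) geodesic flow of $F_c$. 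In particular, closed characteristics of $\overline{\Sigma_c}$ are in bijection with closed $F_c$-geodesics.

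Along such a closed orbit $\gamma$, the linearized Hamiltonian flow on the symplectic normal bundle can be compared, via the vertical-horizontal splitting $T(T^*S^2)|_\gamma \cong \gamma^*TS^2 \oplus \gamma^*T^*S^2$, with the Jacobi equation along the corresponding geodesic. The outcome of that comparison, carried out for instance in \cite{CFvK} for the analogous rotating Kepler problem and transferring verbatim to our setting, is the equality
$$\mu_{CZ}(\gamma) = \mathrm{ind}_{\mathrm{Morse}}(\gamma),$$
where the Conley-Zehnder index is computed in the symplectic trivialization induced by the vertical distribution along $\gamma$. The Finsler Morse index theorem then expresses $\mathrm{ind}_{\mathrm{Morse}}(\gamma)$ as the number of conjugate points along $\gamma$ counted with multiplicity, which is manifestly a nonnegative integer. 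This yields $\mu_{CZ}(\gamma)\geq 0$ for every closed characteristic of $\overline{\Sigma_c}$ and so proves the corollary.

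The delicate point, and essentially the only nontrivial content of the argument, is the choice of trivialization. Since $\overline{\Sigma_c}\cong\mathbb{R}P^3$ has fundamental group $\mathbb{Z}/2$, a closed characteristic need not be contractible in the hypersurface, so there is no homotopy-class-independent Conley-Zehnder index; one has to fix a reference framing. The vertical distribution coming from the cotangent bundle structure provides the natural such framing, and it is precisely this framing that makes the identification with the Morse index hold. Once this convention is fixed, no further analytic difficulty arises: the smoothness and strict convexity of $F_c$ that are needed to invoke Finsler Morse theory follow from the smoothness and strict fiberwise convexity of $\overline{\Sigma_c}$ established in proving Theorem \ref{main theorem}.
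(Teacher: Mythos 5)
Your proposal follows the same route as the paper: identify $\overline{\Sigma_c}$ as the unit cotangent bundle of a Finsler metric via the fiberwise convexity of Theorem~\ref{main theorem}, equate Conley-Zehnder indices of closed characteristics with Morse indices of the dual Finsler geodesics, and conclude nonnegativity from the Morse index theorem. You are more explicit than the paper about the trivialization issue arising from $\pi_1(\mathbb{R}P^3)\neq 0$ and about the mechanism of the index comparison (citing \cite{CFvK}), which is a welcome elaboration of a point the paper asserts without comment.
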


The Conley-Zehnder indices of the closed characteristics of the Hamiltonian flow including collision orbits coincide with the Morse indices of the corresponding geodesics. Therefore, we know that all closed characteristics of the regularized Hill's lunar problem have nonnegative Conley-Zehnder indices. Of course, it is well-known that the Conley-Zehnder indices of closed characteristics of the unregularized Hill's lunar problem are nonnegative. Indeed, the Hamiltonian of the unregularized Hill's lunar problem is a magnetic Hamiltonian and the Conley-Zehnder indices are nonnegative for any magnetic Hamiltonian. However, this result is new for collision orbits. Moreover, thanks to the result in \cite{ABT} using the systolic inequality, we can ensure the existence of a closed characteristic whose action is less than a volume related constant. We refer the following Theorem.

\begin{Thm}
(\'Alvarez Paiva-Balacheff-Tsanev) There exists a constant $k(S^2)>0$ such that every fiberwise convex hypersurface $\Sigma \subset T^* S^2$ bounding a volume $V$ carries a closed characteristic whose action is less than $k(S^2) \sqrt{V}$. Here $k(S^2)$ does not depend on $\Sigma$.
\end{Thm}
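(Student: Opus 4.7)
The plan is to reduce the claim to a systolic inequality for Finsler metrics on $S^2$ via Legendre duality. Since $\Sigma$ is fiberwise convex and each slice $\Sigma \cap T^*_p S^2$ bounds a convex region containing the origin, one defines a positively $1$-homogeneous function $H$ on $T^*S^2$ away from the zero section by declaring $\Sigma = \{H = 1\}$; its fiberwise Legendre transform yields a (generally non-reversible) Finsler metric $F$ on $S^2$. Under this correspondence, the region bounded by $\Sigma$ is exactly the dual unit co-disk bundle $D^*F$, and the Reeb flow of $\lambda|_{\Sigma}$, with $\lambda$ the Liouville form, is conjugate, up to reparametrization, to the geodesic flow of $(S^2, F)$ restricted to $\{H=1\}$. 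In particular, closed characteristics of $\Sigma$ correspond bijectively to closed $F$-geodesics, and the action $\int_\gamma \lambda$ of a closed characteristic equals the Finsler length $\ell_F$ of the associated geodesic.

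Next, I would rewrite the enclosed symplectic volume as a Finsler invariant. By definition, the Holmes-Thompson volume of an $n$-dimensional Finsler manifold is $1/\mathrm{vol}(B^n)$ times the symplectic volume of its dual unit co-disk bundle; applied to $(S^2, F)$ this gives
$$V \;=\; \pi \cdot \mathrm{vol}_{HT}(S^2, F),$$
since $\mathrm{vol}(B^2) = \pi$. Hence the enclosed symplectic volume and the Holmes-Thompson volume of the associated Finsler $2$-sphere differ only by a universal constant.

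Then I would invoke the systolic inequality for Finsler $2$-spheres: there exists a universal constant $C > 0$ such that every (possibly non-reversible) Finsler metric $F$ on $S^2$ admits a closed geodesic $\gamma$ with
$$\ell_F(\gamma)^2 \;\le\; C \cdot \mathrm{vol}_{HT}(S^2, F).$$
Combining this with the identification of action and length and the volume formula above, the closed characteristic on $\Sigma$ corresponding to $\gamma$ has action at most $(C/\sqrt{\pi}) \sqrt{V}$, establishing the claim with $k(S^2) = C/\sqrt{\pi}$.

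The main obstacle is the Finsler systolic inequality itself in the non-reversible setting. Classical proofs on the Riemannian or reversible Finsler $S^2$, such as Croke's inequality or filling-radius methods, use the symmetry $F(-v) = F(v)$ crucially, either through reflected geodesic loops or through identifying Finsler length with a symmetric distance. For an asymmetric metric one must distinguish forward and reverse systoles and replace Gromov's filling inequality by a one-sided analogue adapted to asymmetric distances. This is the core analytic content of the work of \'Alvarez Paiva, Balacheff and Tsanev; once it is in hand, the symplectic translation outlined above is essentially formal.
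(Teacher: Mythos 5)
The theorem you are asked about is not proved in the paper at all: Lee states it as a quoted result from \cite{ABT} (``We refer the following Theorem''), so there is no ``paper's proof'' to compare against. Your write-up should therefore be judged purely on whether it constitutes a proof of the statement.

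Your Legendre-duality dictionary is set up correctly: a fiberwise convex hypersurface $\Sigma$ containing the zero section in its interior is the unit level set of a fiberwise $1$-homogeneous, fiberwise convex Hamiltonian $H$; the region it bounds is the dual unit co-disk bundle of a (generally non-reversible) Finsler metric $F$; by Euler's identity the Reeb flow of $\lambda|_\Sigma$ agrees with the characteristic flow and hence, after Legendre transform, with the cogeodesic flow of $F$; actions of closed characteristics equal $F$-lengths of the corresponding closed geodesics; and the enclosed symplectic volume equals $\pi\,\mathrm{vol}_{HT}(S^2,F)$ by the definition of the Holmes--Thompson area. All of this is accurate, though it is a standard change of language rather than a proof.

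The genuine gap is that you invoke, without proof, the statement ``there is a universal $C>0$ so that every Finsler $2$-sphere has a closed geodesic $\gamma$ with $\ell_F(\gamma)^2\le C\,\mathrm{vol}_{HT}(S^2,F)$.'' You yourself flag this as ``the core analytic content of the work of \'Alvarez Paiva, Balacheff and Tsanev,'' and indeed it is the \emph{entire} content of the theorem: after the dictionary above, that inequality is literally equivalent to the assertion you were asked to prove (up to the fixed factor $\sqrt{\pi}$ in $k(S^2)$). Nothing in your argument engages with why such a universal constant exists for an asymmetric Finsler metric on a non-essential surface like $S^2$; there is no filling argument, no adaptation of Croke's or Gromov's methods to the non-reversible setting, and no derivation of an explicit $C$ (the cited work produces $k(S^2)\le\sqrt{3\pi}\cdot 10^8$). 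As written, the proposal reduces the theorem to a restatement of itself in Finsler language, so it cannot stand as a proof.
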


The volume $V$ in here is the Holmes-Thompson volume that is the symplectic volume with the canonical symplectic form in the cotangent bundle. This coincides with the contact volume of $\overline{\Sigma_c}$ with the canonical contact form $\alpha:=\lambda | _{\overline{\Sigma_c}}$ where $\lambda$ is the Liouville one form of $T^* S^2$ by Stokes' Theorem. Moreover, it is known that the constant $k(S^2)$ is less than $\sqrt{3\pi}10^8$ and this constant is independent of $\Sigma$. In \cite{ABT}, they explained the beautiful relationship between contact and systolic geometry which allows to extend the result  of Gromov and Croke in systolic inequality on Riemannian manifolds. As an application of this Theorem, we can formulate the following Corollary.

\begin{Cor}
The regularized Hill's lunar problem has at least one periodic orbit, possibly a collision orbit, whose action is less than $k(S^2) \sqrt{Vol(\overline{\Sigma_c})}$.
\end{Cor}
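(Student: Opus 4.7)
The plan is to invoke the \'Alvarez Paiva-Balacheff-Tsanev theorem essentially as a black box, having already secured the geometric hypothesis through Theorem~\ref{main theorem}. By Theorem~\ref{main theorem} together with the reformulation \hyperlink{F1}{$\bold{(F1)}$}--\hyperlink{F2}{$\bold{(F2)}$}, the compactified energy surface $\overline{\Sigma_c}\subset T^{*}S^{2}$ is a smooth fiberwise convex hypersurface bounding a domain containing the zero section. This is precisely the input required by the \'APBT theorem, and applying it directly produces a closed characteristic $\gamma$ on $\overline{\Sigma_c}$ whose action is less than $k(S^{2})\sqrt{V}$, where $V$ is the Holmes-Thompson volume of the region enclosed by $\overline{\Sigma_c}$.

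Next I would identify $V$ with $\mathrm{Vol}(\overline{\Sigma_c})$, just as already remarked in the discussion preceding the corollary. Since $\overline{\Sigma_c}$ is fiberwise starshaped, the restriction $\alpha:=\lambda|_{\overline{\Sigma_c}}$ of the Liouville one form is a contact form, and Stokes' theorem gives
\[
V \;=\; \int_{\overline{\Sigma_c}} \alpha\wedge d\alpha \;=\; \mathrm{Vol}(\overline{\Sigma_c}),
\]
so the action bound on $\gamma$ is precisely $k(S^{2})\sqrt{\mathrm{Vol}(\overline{\Sigma_c})}$.

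The last step is to translate the characteristic $\gamma$ back into a periodic orbit of the regularized Hill's lunar problem. Under the symplectomorphism $(q,p)\mapsto(p,-q)$ and the stereographic compactification of $\mathbb{C}$ to $S^{2}$ used in the setup, closed characteristics of $\overline{\Sigma_c}$ correspond exactly to trajectories of the Hamiltonian flow of $K_c$ on $\Sigma_c$, since the flow on an energy level is a reparametrization of the Reeb flow of $\alpha$. If $\gamma$ avoids the fiber over the point at infinity, it descends to an ordinary periodic orbit of $H_{HLP}$ on $H_{HLP}^{-1}(-c)$; if instead it crosses this added fiber, the resulting trajectory is a collision orbit of the original problem. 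This is exactly the ``possibly a collision orbit'' caveat in the statement. I expect this interpretive step to be the only point demanding any care, because collision orbits are precisely the trajectories one gains by passing to the regularized problem and the Hamiltonian action of the orbit matches the contact action of $\gamma$ under the identifications above.
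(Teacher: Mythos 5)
Your proposal is correct and follows the same line the paper takes: apply Theorem~\ref{main theorem} (via \hyperlink{F1}{$\bold{(F1)}$}, \hyperlink{F2}{$\bold{(F2)}$}) to certify the hypotheses of the \'Alvarez Paiva--Balacheff--Tsanev theorem, identify the Holmes--Thompson volume with the contact volume of $\overline{\Sigma_c}$ via Stokes, and translate the closed characteristic back to a (possibly collision) periodic orbit of the regularized flow. The paper leaves this corollary without a spelled-out proof, relying on exactly the remarks you cite, so your write-up is essentially the intended argument made explicit.
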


One interesting question is what we can get from systolic geometry for our practical Hamiltonian problems which have contact structures. In particular, one can ask how the systolic capacity changes under a perturbation of the Hamiltonian, because Hill's lunar problem is a limit case of the restricted three body problem. Hopefully, if one can answer this question, then one might get insight into the restricted three body problem using this information and method in the proof.

\begin{Def}
A contact form on a compact 3-manifold is called dynamically convex, if its contractible periodic Reeb orbits of positive period have Conley-Zehnder index greater than 2 with respect to any filling disk.
\end{Def}

As a goal on the Hill's lunar problem, I want to mention dynamical convexity. A motivation of this paper is showing the dynamical convexity of the double cover of Hill's lunar problem. It is known that the rotating Kepler problem is dynamically convex and the restricted three body problem is also dynamically convex for some mass ratio and energy. It is still unknown if the Hill's lunar problem is dynamically convex. Because we know an energy hypersurface of the regularized Hill's lunar problem is tight $\mathbb{R}P^3$, its double cover has the contact structure of the unique tight structure on $S^3$. If the double cover of Hill's lunar problem is dynamically convex, then this double cover allows a disk-like global surface of section for the Hamiltonian vector field. This will simplify the problem dramatically. We will see the related result in section \ref{R3BP}.
\\ \\
$\bold{Acknowledgements : }$ I thank Urs Frauenfelder and Otto van Koert for encouragements and discussions. I am also grateful to the colleagues in Augsburg university for their helps that make me adapt well to the new surroundings in Augsburg. I wish to express my thanks to the referee of this article for many helpful comments. This research is supported by DFG-CI 45/6-1: Algebraic Structures on Symplectic Homology and Their Applications.

\section{Prerequisite}
\label{pre}

It is based on Moser regularization in \cite{Moser} to understand why the fiberwise convexity is helpful to study Hill's lunar problem in Hamiltonian dynamics. Moser regularization tells us the planar Kepler problem can be compactified to the geodesic problem on the standard 2-sphere. This argument can be improved for the case of a fiberwise convex hypersurface which corresponds to the geodesic problem on a 2-sphere with Finsler metric. On the other hand, we need to know how Hill's lunar problem can be derived from the restricted three body problem. Since Hill's lunar problem is a limit of the restricted three body problem, they have relationships with each other. For example, Meyer and Schmidt in \cite{MS} show that any non-degenerate periodic solution of Hill's lunar problem whose period is not a multiple of $2\pi$ can be lifted to the three body problem. This could be proven by observing the derivation of Hill's lunar problem from the three body problem. Thus, understanding the relation between Hill's lunar problem and the restricted three body problem will be helpful to get some ideas for the restricted three body problem from the result of Hill's lunar problem. For example, fiberwise convexity of the restricted three body problem is still open. We will review Moser regularization of the Kepler problem in section \ref{Moser reg} and the restricted three body problem with its relation to Hill's lunar problem in section \ref{R3BP}.

\subsection{Kepler problem and Moser regularization}
\label{Moser reg}

The differential equation of the Kepler problem is given by
$${d^2q \over dt^2}=-{q\over |q|^3}$$
using some normalization. Therefore the potential function $V : (\mathbb{R}^2)^* \rightarrow \mathbb{R}$ is $V(q)=-{1 \over |q|}$ and this induces the Hamiltonian of the Kepler problem as the total energy.
$$H : \mathbb{R}^2 \times (\mathbb{R}^2)^* \rightarrow \mathbb{R}, \quad H(p,q)={1\over 2}|p|^2-{1 \over |q|}$$
However, this is not so practical to analyze by geometric methods because this Hamiltonian has a singularity at $q=0$. One preferred way to remove this singularity is Moser regularization. For constant $c \in \mathbb{R}$, we define the Hamiltonian
$$K_c (p,q):=|q|(H(p, q)+c)$$
Then we can easily see that $K_c$ has no singularity and $H^{-1}(-c)=K_c^{-1}(0)$. However, these two Hamiltonian dynamics on this level set arising from $H$ and $K_c$ are not exactly same, but have the same Hamiltonian flow up to time reparametrization. We introduce a new time parameter $s=\int {dt \over |q|}$ for $K_c$ to make these equivalent problems.

We briefly explain Moser's paper \cite{Moser} which shows that this regularized Kepler problem is equivalent to the geodesic problem on standard 2-sphere. We consider the energy level $-c=-{1\over2}$. 
$$K_{1\over2}(p,q)={1\over2}|q|(|p|^2+1)-1 \implies K_{1\over2}^{-1}(0)=\{(p,q) \in \mathbb{R}^2 \times (\mathbb{R}^2)^* | {1 \over 2}(|p|^2+1)|q|=1 \}$$
Other energy levels can be proven analogously by simple rescaling. Note that $(p, q) \mapsto (q,-p)$ is symplectic and in our case this seems like interchanging the role of $p$ and $q$. We can see that ${1 \over 2}(|p|^2+1)|q|=1$ comes from energy hypersurface $F(x, y)=1$ of $T^* S^2$ where $F(x, y)={1 \over 2}|y|_{round}^2$ the Hamiltonian for free particle via the stereographic projection. The flow of the Hamiltonian for a free particle is the geodesic flow in general. Therefore the Hamiltonian flow of the Kepler problem corresponds to the geodesic problem on $S^2$ with the round metric. Precisely, Moser proved the following Theorem for the Kepler problem in $n$-dimensional space.

\begin{Thm}[Moser]
For a negative energy $-c<0$, the energy hypersurface $H^{-1}(-c)$ can be mapped bijectively into the unit tangent bundle of $S^n - \{ \textrm{north pole} \}$. Furthermore, the flow defined by the Kepler problem is mapped into the geodesic flow on the punctured sphere $S^n-\{ \textrm{north pole} \}$.
\end{Thm}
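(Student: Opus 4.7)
The plan is to follow the geometric sketch the paper gives for $n=2$: reduce to a single energy level by a scaling symmetry of the Kepler Hamiltonian, rewrite $K_{c}^{-1}(0)$ on a symplectomorphic copy of $T^{*}\mathbb{R}^{n}$ so that the Kepler momentum direction becomes the base direction, identify the resulting hypersurface with the unit cosphere bundle of the round sphere via stereographic projection, and finally relate the two flows via a time reparametrization.

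First I would normalize. The Kepler Hamiltonian is quasi-homogeneous under the rescaling $(p,q)\mapsto(\lambda p,\lambda^{-2}q)$: one checks $H(\lambda p,\lambda^{-2}q)=\lambda^{2}H(p,q)$, so this map carries $H^{-1}(-c)$ to $H^{-1}(-\lambda^{2}c)$ and sends the Hamiltonian flow to itself up to a constant time rescaling. Hence it suffices to treat $-c=-1/2$. Next I apply the symplectomorphism $\Phi:(p,q)\mapsto(q,-p)$ of $T^{*}\mathbb{R}^{n}$; the point of this swap is to reinterpret the Kepler momentum as the new base coordinate, which is what makes the identification with $T^{*}(S^{n}\setminus\{N\})$ via stereographic projection on the base meaningful. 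After the swap and relabeling, the set $K_{1/2}^{-1}(0)$ becomes the hypersurface $\tfrac{1}{2}|q|(|p|^{2}+1)=1$ in $T^{*}\mathbb{R}^{n}$, with $p$ now a base point and $q$ a covector.

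For the geometric identification I would use stereographic projection $\sigma:S^{n}\setminus\{N\}\to\mathbb{R}^{n}$ from the north pole $N$; its inverse pulls the round metric back to $g=4(1+|p|^{2})^{-2}\sum_{i}dp_{i}\otimes dp_{i}$, with cometric $g^{ij}=\tfrac{1}{4}(1+|p|^{2})^{2}\delta^{ij}$. The free-particle Hamiltonian on $T^{*}(S^{n}\setminus\{N\})$ therefore reads $F(p,q)=\tfrac{1}{8}(1+|p|^{2})^{2}|q|^{2}$ in this chart, and its level set $F=1/2$ is exactly $\tfrac{1}{2}(1+|p|^{2})|q|=1$. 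The cotangent lift of $\sigma^{-1}$ is a symplectomorphism $T^{*}\mathbb{R}^{n}\to T^{*}(S^{n}\setminus\{N\})$, and the computation above shows it carries $K_{1/2}^{-1}(0)$ bijectively onto the unit cosphere bundle $F^{-1}(1/2)$; combined with the swap, this yields the desired bijection $H^{-1}(-1/2)\to ST^{*}(S^{n}\setminus\{N\})$, where I identify the unit cotangent and unit tangent bundles via the round metric.

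Finally, to match the flows I would use the standard principle that two Hamiltonians sharing a common regular level set have pointwise proportional Hamiltonian vector fields on that set. Since $K_{1/2}=|q|(H+1/2)$, one obtains $X_{K_{1/2}}=|q|X_{H}$ on $K_{1/2}^{-1}(0)$, so introducing the new time $s$ with $ds=dt/|q|$ converts the Kepler flow into the flow of $K_{1/2}$, which after the symplectic identification above is the geodesic flow of the round metric. The step I expect to be most delicate is the bookkeeping of the cotangent lift combined with the swap: one must check that the conformal factor $(1+|p|^{2})^{2}/4$ produced by the pullback of the round cometric matches the Kepler factor exactly, so that the two level sets coincide on the nose rather than merely up to scale. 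Bijectivity onto the sphere bundle of the \emph{punctured} sphere is automatic because stereographic projection is a diffeomorphism; the missing fiber over $N$ corresponds to the collision locus, and its adjoining is the content of Moser's regularization proper, which goes beyond the statement of this theorem.
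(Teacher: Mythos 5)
Your argument is correct and follows the same route the paper sketches (and that Moser's original paper takes): fix the level $-1/2$ using the Kepler quasi-homogeneous scaling, apply the symplectic swap $(p,q)\mapsto(q,-p)$ to make the Kepler momentum the base variable, identify the resulting hypersurface $\tfrac12(1+|p|^2)|q|=1$ with the unit cosphere bundle of the round metric via the cotangent lift of stereographic projection, and convert the flow of $H$ on $H^{-1}(-1/2)$ to that of $K_{1/2}$ by the time change $ds=dt/|q|$. Your explicit computation of the conformal factor $\tfrac14(1+|p|^2)^2$ for the round cometric and the verification that the level set $F=1/2$ matches $K_{1/2}^{-1}(0)$ exactly fills in precisely the step the paper leaves implicit.
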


The above argument can be extended to the fiberwise convex case. In the case of Kepler problem case, amazingly, the trajectory of $q$ for fixed position $p \in S^2$ is exactly unit circle in the cotangent space $T^*_p S^2$ with the round metric. Thus, if an energy hypersurface of a Hamiltonian system is the unit cotangent bundle $S^*_g S^2$ of a metric $g$, then the Hamiltonian system on this hypersurface corresponds to the problem of geodesic on $S^2$ with the metric $g$. Moreover, if a problem has a level set trajectory of $q$ which encircles the convex region containing the origin for any $p \in S^2$, then this will be the geodesic problem on $S^2$ with Finsler metric by defining the position of $q$ in $T^*_p S^2$ to be the unit length. Therefore, we set up \hyperlink{F1}{$\bold{(F1)}$} to determine whether the energy hypersurface of Hill's lunar problem can be seen as a submanifold in $T^* S^2$ after regularization and changing the role of $q$ and $p$. Moreover, we set up \hyperlink{F2}{$\bold{(F2)}$} to determine if the hypersurface can define a Finsler metric on $T^* S^2$.

\subsection{The restricted three body problem, the rotating Kepler problem and Hill's lunar problem}
\label{R3BP}

We can derive the time-independent Hamiltonian of the restricted three body problem by introducing rotating coordinates with unit angular velocity. It is important to understand how one can derive Hill's lunar problem from the restricted three body problem not only to decide which problem can be effective with Hill's setup, but also to get intuitions to know closed characteristics of the restricted three body problem from Hill's lunar problem.

First, we explain the derivation of the Hamiltonian for the restricted three body problem briefly. We denote the masses $M_1, M_2$ of two primaries $P_1, P_2$. We define $\mu={M_2\over M_1+M_2}$ and assume that two primaries have the following motion.
$$P_1(t)=(-\mu \cos t, -\mu \sin t), \quad  P_2(t)=((1-\mu) \cos t, (1-\mu) \sin t)$$
We are interested in the motion of a massless particle $S(t) \in \mathbb{R}^2-\{ P_1(t), P_2(t)\}$ and we can easily derive the Hamiltonian
$$H^i (t,q^i, p^i)={1 \over 2}|p^i|^2-{\mu \over |q^i -P_2(t)|}-{1-\mu \over |q^i -P_1(t)|}$$
for the restricted three body problem in inertial system. We put index $i$ to emphasize that this Hamiltonian is taken in the inertial system. Note that $H^i$ is time-dependent. Now we consider the rotating system to make this Hamiltonian become time-independent. We express the positions of $P_1$ and $P_2$
$$A_1:=(-\mu, 0), A_2:=(1-\mu, 0) \implies P_1(t)=R_t A_1, P_2(t)=R_t A_2$$
by the rotation $R_t=\begin{pmatrix} \cos t & -\sin t \\ \sin t & \cos t\end{pmatrix}$ of two fixed points $A_1$ and $A_2$, respectively.
We define the rotation 
$$
\Psi_t:=R_t \oplus R_t=\begin{pmatrix}  \cos t & -\sin t  & 0 & 0 \\ \sin t & \cos t & 0 & 0 \\ 0 & 0 & \cos t & -\sin t \\ 0 & 0 & \sin t & \cos t  \end{pmatrix}
$$
on $T^* \mathbb{R}^2=\mathbb{R}^2 \times \mathbb{R}^2$. We can find the following Theorem in many books, for example, see \cite{Kim}.

\begin{Thm}
Let $H^r$ be the Hamiltonian in a rotating system which rotate by $\Psi_t$. Then $H^r=H^i \circ \phi_K^t - K$ where $K=q_1p_2-q_2p_1$ and $\phi_K^t$ are Hamiltonian diffeomorphisms generated by $K$. In particular $H^r$ is autonomous.
\end{Thm}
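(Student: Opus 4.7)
The plan is to treat the statement as a textbook computation of how a Hamiltonian transforms under a time-dependent symplectic change of frame, followed by a direct check that the resulting expression is autonomous. I would proceed in three short steps.

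First, I would identify the flow $\phi_K^t$ explicitly. Writing Hamilton's equations for $K=q_1p_2-q_2p_1$ gives $\dot q_1=-q_2,\ \dot q_2=q_1,\ \dot p_1=-p_2,\ \dot p_2=p_1$, whose time-$t$ flow is exactly the block-diagonal rotation $\Psi_t=R_t\oplus R_t$ on $T^*\mathbb{R}^2$. In particular, passing from rotating to inertial coordinates coincides with applying $\phi_K^t$, so the rotating-frame trajectory is $\gamma^r(t):=\phi_K^{-t}(\gamma^i(t))$ whenever $\gamma^i$ is an inertial trajectory of $H^i$.

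Second, I would differentiate and read off the new Hamiltonian. The time-dependent chain rule gives
$$\dot\gamma^r=-X_K(\gamma^r)+D\phi_K^{-t}\big|_{\gamma^i}\cdot X_{H^i(t,\cdot)}(\gamma^i).$$
Since $\phi_K^t$ is symplectic, the standard pullback identity $X_{H\circ\phi}(x)=(D\phi_x)^{-1}X_H(\phi(x))$ applied with $\phi=\phi_K^t$ and $x=\gamma^r$ rewrites the second term as $X_{H^i(t,\cdot)\circ\phi_K^t}(\gamma^r)$. Because $H\mapsto X_H$ is linear, $\gamma^r$ satisfies Hamilton's equations for $H^r:=H^i\circ\phi_K^t-K$, which is the claimed formula.

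Third, I would verify that $H^r$ carries no residual time dependence. The kinetic term $\tfrac12|p|^2$ is $R_t$-invariant, and the potential terms transform as
$$H^i\bigl(t,R_tq,R_tp\bigr)=\tfrac12|p|^2-\frac{1-\mu}{|R_tq-R_tA_1|}-\frac{\mu}{|R_tq-R_tA_2|},$$
which, by the isometry property of $R_t$ together with $P_j(t)=R_tA_j$, collapses to a function of $(q,p)$ alone. Since $K$ itself has no $t$-dependence, neither does $H^r$. The only place that requires any care is step two, where the time-dependent chain rule must be combined with the symplectic pullback identity in the correct direction; once $\phi_K^t$ has been recognised as the frame rotation $\Psi_t$, everything else is mechanical.
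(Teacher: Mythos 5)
The paper states this theorem as a known fact and refers the reader to a reference rather than proving it, so there is no proof in the paper to compare against. Your argument is correct: you correctly identify the flow $\phi_K^t$ with $\Psi_t$ by solving Hamilton's equations for $K$, then combine the time-dependent chain rule with the symplectic pullback identity $X_{H\circ\phi}=(D\phi)^{-1}\,(X_H\circ\phi)$ to obtain $H^r=H^i\circ\phi_K^t-K$, and finally verify autonomy from the orthogonality of $R_t$ together with $P_j(t)=R_tA_j$. This is the standard textbook derivation and is complete.
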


We have a time-independent Hamiltonian
$$H_1^{\mu} : \mathbb{R}^2 \times (\mathbb{R}^2- \{A_1, A_2 \}) \rightarrow \mathbb{R},$$
$$H_1^{\mu}(p, q)={1\over 2}|p|^2-{\mu \over |q-A_2|}-{1-\mu \over |q-A_1|}+p_1q_2-p_2q_1$$
for the restricted three body problem of mass ratio $\mu$ in the rotating coordinates. Also, we can get this  equivalent Hamiltonian
$$H_2^{\mu} : \mathbb{R}^2 \times (\mathbb{R}^2- \{(0, 0), (1, 0) \}) \rightarrow \mathbb{R},$$
$$H_2^{\mu}(p, q)={1\over 2}|p|^2-{\mu \over |q-(1,0)|}-{1-\mu \over |q|}+p_1q_2-p_2q_1-\mu p_2$$
by translating in $q$-coordinates.

Many important studies of global properties of the restricted three body problem have been done in the study of this time-independent Hamiltonian using symplectic geometry. Recently there was a remarkable result \cite{AFFHvK} which tells us the existence of a disk-like global surfaces of section for Hamiltonian vector field in the restricted three body problem for $\mu \in (\mu_0(c), 1)$ where $-c$ is the energy below the first Lagrange value. We mention the definition of the disk-like global surface of section.

\begin{Def}
Let $\Sigma$ be a smooth 3-manifold with a nowhere vanishing vector field $X$. A global disk-like surface of section for $X$ consists of a embedded closed disk $\mathcal{D} \in \Sigma$ having the following properties:
\begin{enumerate}
\item The boundary $\partial \mathcal{D}$ is a periodic orbit, called the spanning orbit.
\item The interior $int(\mathcal{D})$ of the disk is a smooth submanifold of $\Sigma$ and is transversal to the flow.
\item Every orbit except the spanning orbit intersects $int(\mathcal{D})$ in forward and backward time.
\end{enumerate}
\end{Def}

One can easily recognize from this definition that a global disk-like surface of section reduces the study of the dynamics on a 3-manifold to the study of the return map on the disk. The result about the existence of a global disk-like surface of section for the restricted three body problem in \cite{AFFHvK} based on the result of Hofer, Wysocki and Zehnder \cite{HWZ} which uses a pseudoholomorphic curve theory for an energy hypersurface in $\mathbb{R}^4$. In \cite{HWZ}, they prove that strict convexity of an energy hypersurface implies dynamical convexity and dynamical convexity implies the existence of global disk-like surfaces of section. As an application of this theory, in \cite{AFFHvK}, they found pairs of $(\mu, c)$ where the energy hypersurfaces $K_{\mu, c}^{-1}(0)$ of the regularized Hamiltonian for such pairs bound a strictly convex region. We define the regularized Hamiltonian $K_{\mu, c}$ for the precise statement in \cite{AFFHvK}. We introduce the Levi-Civita coordinates $(u, v)$ using a 2:1 symplectic map, up to a constant factor, $q=2v^2, p={u \over \overline{v}}$ and apply to $H_2^{\mu}$.
$$K_{\mu, c}(u, v):=|v|^2(H_{2}^{\mu}(u, v)+c)={1\over 2}|u|^2+2|v|^2<u,iv>-\mu Im(uv)-{1-\mu \over 2}-{\mu |v|^2 \over |2v^2-1|}+c|v|^2$$
The energy hypersurface $K_{\mu, c}^{-1}(0)$ coincides with the energy hypersurface $(H_2^{\mu})^{-1}(-c)$. This implies the Hamiltonian flows $\phi_{K_{\mu, c}}^t$, $\phi_{H_2^{\mu}}^t$ are same up to time reparametrization. We state the result in \cite{AFFHvK}.

\begin{Thm}[Albers-Fish-Frauenfelder-Hofer-van Koert] Given $c>{3 \over 2}$, there exists $\mu_0=\mu_0 (c) \in [0,1)$ such that for all $\mu_0<\mu<1$ there exists a disk-like global surface of section for the hypersurface $K_{\mu, c}^{-1}(0)$ with its Reeb vector field.
\end{Thm}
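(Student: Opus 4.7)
The plan is to apply the chain of implications from \cite{HWZ} already cited in the excerpt: strict convexity of a compact hypersurface in $\mathbb{R}^4$ implies dynamical convexity, and dynamical convexity implies the existence of a disk-like global surface of section for the associated Reeb flow. Hence it suffices to show that for every $c > 3/2$ and every $\mu$ in an interval $(\mu_0(c), 1)$, the bounded component $\Sigma_{\mu, c}$ of $K_{\mu, c}^{-1}(0) \subset \mathbb{R}^4(u, v)$ bounds a strictly convex region. The bounded component in question is the one obtained by lifting, via $q = 2v^2$, the Hill region around the small primary $P_1$ at the origin (of mass $1-\mu$); for $c > 3/2$ this region is a topological open disk separated from the Hill region of $P_2$ by the first Lagrange point, and $\Sigma_{\mu, c}$ is then diffeomorphic to $S^3$.

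To extract a tractable limit I would set $\epsilon := 1 - \mu$ and apply the linear symplectic rescaling $u = \epsilon^{1/2}\tilde u$, $v = \epsilon^{1/2}\tilde v$. Substituting into the explicit formula for $K_{\mu, c}$ given in the excerpt and dividing by $\epsilon$ gives
\begin{equation*}
\tfrac{1}{\epsilon}\, K_{\mu, c}\bigl(\epsilon^{1/2}\tilde u,\; \epsilon^{1/2}\tilde v\bigr) \;=\; \tfrac{1}{2}\,|\tilde u|^2 \;-\; \mathrm{Im}(\tilde u \tilde v) \;+\; (c-1)\,|\tilde v|^2 \;-\; \tfrac{1}{2} \;+\; O(\epsilon),
\end{equation*}
where the remainder is uniform in $C^{\infty}_{\mathrm{loc}}$. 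The leading quadratic form splits as a direct sum of two identical $2\times 2$ blocks with matrix $\begin{pmatrix} 1/2 & -1/2 \\ -1/2 & c-1 \end{pmatrix}$; each has positive trace and positive determinant $(2c-3)/4$ precisely when $c > 3/2$. So the zero level set of the limit Hamiltonian is a compact strictly convex ellipsoid $E_c \subset \mathbb{R}^4$.

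Strict convexity of a compact smooth hypersurface is an open condition in the $C^2$ topology on its defining function. Since the rescaled Hamiltonian converges in $C^{\infty}_{\mathrm{loc}}$ to the quadratic model and the rescaled bounded component is a small $C^2$-perturbation of $E_c$, strict convexity propagates from $E_c$ to the rescaled $\Sigma_{\mu, c}$ for all sufficiently small $\epsilon > 0$. Unscaling by the linear symplectomorphism preserves strict convexity, and one then defines $\mu_0(c)$ as the infimum of parameters $\mu \in [0, 1)$ for which this perturbative argument closes.

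The main obstacle is to secure the uniform $C^2$ control of the remainder in the rescaling step. The term $\mu |v|^2 / |2v^2 - 1|$ encodes the Levi-Civita image of the singularity at the big primary $P_2$, and one must verify that on the rescaled bounded component the denominator $|2\epsilon \tilde v^2 - 1|$ stays uniformly bounded away from zero as $\epsilon \to 0$. This is precisely what the hypothesis $c > 3/2$ buys: it forces the $q$-Hill region around $P_1$ to remain at a definite distance from $P_2$, keeping the perturbation small in every derivative on the relevant compact set. Once that uniform bound is in place, the rest is a routine application of the implicit function theorem to the defining function of a strictly convex hypersurface, combined with the HWZ theorems invoked at the outset.
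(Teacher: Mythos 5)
This theorem appears in the paper only as a citation to \cite{AFFHvK}; the paper under review does not prove it, so there is no internal proof to compare against. Judged on its own, your blind reconstruction captures the mechanism of the original AFFHvK argument: a conformal symplectic rescaling $u=\sqrt{1-\mu}\,\tilde u$, $v=\sqrt{1-\mu}\,\tilde v$ followed by division by $\epsilon=1-\mu$ degenerates the Levi--Civita regularized Hamiltonian, in the limit $\mu\to 1$, to the quadratic form $\tfrac12|\tilde u|^2-\mathrm{Im}(\tilde u\tilde v)+(c-1)|\tilde v|^2-\tfrac12$; this splits into two copies of the $2\times 2$ block $\bigl(\begin{smallmatrix}1/2 & -1/2\\ -1/2 & c-1\end{smallmatrix}\bigr)$, positive definite exactly when $c>3/2$; then $C^2$-openness of strict convexity and the HWZ chain (strict convexity $\Rightarrow$ dynamical convexity $\Rightarrow$ disk-like global surface of section) close the argument. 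I verified the arithmetic and it is correct, including the determinant $(2c-3)/4$. The one step you pass over too quickly is the identification of the rescaled \emph{bounded component} of $K_{\mu,c}^{-1}(0)$ with the perturbed ellipsoid: since the unrescaled level set also has noncompact pieces, you need to invoke that $0$ is a regular value of the limit Hamiltonian and that its zero set is compact, so the implicit function theorem produces a compact sphere that is a genuine connected component of $K_{\mu,c}^{-1}(0)$ for $\epsilon$ small; once that is said, the claim that the denominator $|2\epsilon\tilde v^2-1|$ stays bounded away from zero is automatic on the compact set where $\tilde v$ ranges, and does not itself use $c>3/2$. Otherwise the proposal is a faithful reconstruction of the cited proof.
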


One can ask the same question for the limit problems of the restricted three body problem. In \cite{AFFvK}, they give the answer for the rotating Kepler problem. The rotating Kepler problem is dynamically convex after Levi-Civita regularization for each energy below the critical value of the Jacobi energy. Thus, the energy hypersurfaces will have global surfaces of section for the Hamiltonian vector field. Since they also proved the failure of strict convexity in \cite{AFFvK}, the proof is entirely different from the proof in \cite{AFFHvK}. Instead of using theory in \cite{HWZ}, they observed every periodic orbit and computed the Conley-Zehnder indices. One main idea in the computation of the indices comes from the fiberwise convexity of the rotating Kepler problem. They regarded periodic orbits of the rotating Kepler problem as periodic Finsler geodesics and used local stability of Morse homology. On the other hand, we do not know the existence of a global surface of section for Hill's lunar problem.

Now let us explain briefly the derivation of Hill's lunar problem. We will borrow the simple derivation from \cite{MHO}. Apply a coordinate transformation on the Hamiltonian $H_1^{\mu}$ by translating $p, q$-coordinates in the following way.
$$ q_1 \rightarrow q_1+1-\mu, \quad q_2 \rightarrow q_2, \quad p_1 \rightarrow p_1, \quad p_2 \rightarrow p_2+1-\mu.
$$
We have the Hamiltonian
$$H_3^{\mu}(p, q)={1\over 2}|p|^2-{\mu \over |q|}-{1-\mu \over |q+(1, 0)|}+p_1q_2-p_2q_1-(1-\mu)q_1$$
up to constant. By Newton's binomial series $(1+x)^{-1 \over 2}=1-{1 \over 2}x+{3 \over 8}x^2+\cdots$, we get the expansion
$$-{{1-\mu} \over \sqrt{(q_1+1)^2+q_2^2}}=-(1-\mu)(1-q_1+q_1^2-{1\over2}q_2^2+ \cdots)$$
and we apply this on $H_3^{\mu}$
$$H_3^{\mu}(p, q)={1\over 2}|p|^2-{\mu \over |q|}+p_1q_2-p_2q_1-(1-\mu)(q_1^2-{1\over2}q_2^2+\cdots).$$
Consider the scaling $q \rightarrow \mu^{1\over3}q, p \rightarrow \mu^{1 \over 3}p$ which is symplectic with conformal coefficient $\mu^{-2\over3}$. We multiply this factor
$$\mu^{-2 \over 3} H_3^{\mu}(\mu^{1\over3}p, \mu^{1\over3}q)=H_{HLP}(p, q)+O(\mu^{1\over3}),$$
then we obtain the Hamiltonian 
$$H_{HLP}(p, q)={1\over2}|p|^2-{1 \over |q|}+p_1 q_2 -p_2 q_1-q_1^2+{1 \over 2}q_2^2$$
for Hill's lunar problem by taking $\mu \rightarrow 0$.

\section{Interpretation of Theorem \ref{main theorem}.}
\label{interpretation}

From now on, we will concentrate on Hill's lunar problem. Hence we will denote simply by $H$ the Hamiltonian $H_{HLP}$ of Hill's lunar problem. We showed that $H$ has unique critical value $-c_0:=-{3^{4 \over 3} \over 2}$ in section \ref{intro}. We want to show fiberwise convexity for all $-c<-c_0$. We define the Hamiltonian
$$K_c(q, p):=|q|(H(q, p)+c)$$
for the regularization of this problem. The Hamiltonian flow of $K_c$ on $K^{-1}_c (0)$ coincides with the Hamiltonian flow of $H$ on $H^{-1}(-c)$. Moreover, $K_c$ has no singularity. Observe the structure of  $K^{-1}_c (0)$.
\begin{eqnarray*}
(q, p) \in K^{-1}_c (0) &\iff& {1 \over 2}((p_1 +q_2)^2 +(p_2 -q_1)^2)={1\over \sqrt{q_1^2+q_2^2}}+{3 \over 2}q_1^2-c
\\ &\iff& \begin{cases} {1\over \sqrt{q_1^2+q_2^2}}+{3 \over 2}q_1^2=b \ge c \\  (p_1+q_2)^2 +(p_2-q_1)^2=2(b-c) \end{cases}
\end{eqnarray*}
We introduce polar coordinates $q_1=r \cos \theta, q_2=r \sin \theta$, then \begin{math} {1\over \sqrt{q_1^2+q_2^2}}+{3 \over 2}q_1^2=b \iff {3 \over 2}\cos^2 \theta r^3+1=br \end{math}. We can see the structure of the set \begin{math} \{(q_1, q_2) \in \mathbb{R}^2 | {1\over \sqrt{q_1^2+q_2^2}}+{3 \over 2}q_1^2=b\} \end{math} from the following Lemma.

\begin{Lem}
\label{Lemma 3.1}
For $b>c_0={3^{4\over3} \over 2}$, the polar equation ${3\over2}(\cos^2 \theta) r^3 +1=b r$ consists of one bounded closed curve and two unbounded curves. Moreover, if we denote the bounded component of ${3\over2}(\cos^2 \theta) r^3 +1=b r$ by $\sigma_b$, then $\sigma_b$ is contained in the inside of $\sigma_c$ for any $b>c>c_0$.
\end{Lem}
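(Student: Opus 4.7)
The plan is to rewrite the defining equation as $g_\theta(r) = b$, where
\[
g_\theta(r) := \tfrac{1}{r} + \tfrac{3}{2}\cos^2\theta \cdot r^2, \qquad r > 0,
\]
which is just $-U$ expressed in polar coordinates. For each fixed $\theta$ I would analyze the one-variable function $g_\theta$ on $(0,\infty)$: it tends to $+\infty$ at both endpoints, and when $\cos\theta \neq 0$ it has a unique critical point at $r^*_\theta = (3\cos^2\theta)^{-1/3}$, with minimum value $\tfrac{3}{2}(3\cos^2\theta)^{1/3}$. This minimum, viewed as a function of $\theta$, is itself maximized at $\theta = 0, \pi$, where it equals $c_0 = 3^{4/3}/2$. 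Hence $b > c_0$ forces $g_\theta(r^*_\theta) < b$ for every $\theta$ with $\cos\theta \neq 0$, so $g_\theta(r) = b$ has exactly two positive solutions $r_-(\theta) < r^*_\theta < r_+(\theta)$. At $\theta = \pm \pi/2$ the equation degenerates to $1/r = b$, giving the unique solution $r = 1/b$.

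Next I would show that the smaller root $r_-(\theta)$ assembles into a smooth closed curve $\sigma_b$ encircling the origin, while $r_+(\theta)$ produces two unbounded curves. For the closing-up at $\theta = \pm\pi/2$: the defining polynomial $P(r,\theta) = \tfrac{3}{2}\cos^2\theta \cdot r^3 - br + 1$ satisfies $P(1/b, \pm\pi/2) = 0$ and $\partial_r P(1/b, \pm\pi/2) = -b \neq 0$, so the implicit function theorem produces a smooth branch $r_-(\theta)$ through these points. The crude bound $r_-(\theta) \in [1/b, r_-(0)]$, which follows from $1/r \leq g_\theta(r) = b$ and symmetry, then ensures that $\sigma_b$ is genuinely bounded. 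For the two unbounded branches: as $\cos^2\theta \to 0$, balancing the cubic gives $r_+(\theta) \sim \sqrt{2b/(3\cos^2\theta)}$, hence $|q_1| = r_+(\theta)|\cos\theta| \to \sqrt{2b/3}$, so each large-$r$ branch is asymptotic to a vertical line $q_1 = \pm \sqrt{2b/3}$ and therefore unbounded.

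For the inclusion $\sigma_b \subset \operatorname{int}\sigma_c$, I would exploit the monotonicity of $g_\theta$ on $(0, r^*_\theta)$. The bounded component is characterized by $r_-(\theta, b)$ lying in this interval, where $g_\theta$ is strictly decreasing. Implicit differentiation of $g_\theta(r_-) = b$ yields
\[
\partial_b \, r_-(\theta, b) \;=\; \frac{1}{g_\theta'(r_-(\theta, b))} \;<\; 0,
\]
for every $\theta$ with $\cos\theta \neq 0$; and at $\theta = \pm\pi/2$ one has $r_- = 1/b$, which is trivially strictly decreasing in $b$. Therefore $r_-(\theta, b) < r_-(\theta, c)$ pointwise for $b > c > c_0$, which is exactly the statement $\sigma_b \subset \operatorname{int} \sigma_c$.

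The main point requiring care is the passage through the degenerate directions $\theta = \pm\pi/2$, where the cubic drops order and where one must check that the smaller-root branch truly closes up smoothly rather than developing a cusp or escaping to infinity. As the transversality computation $\partial_r P(1/b, \pm\pi/2) = -b \neq 0$ shows, the implicit function theorem handles this case uniformly, so the rest of the argument proceeds without obstruction.
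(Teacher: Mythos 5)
Your argument is correct and follows the same skeleton as the paper's: fix $\theta$, analyze a one-variable function of $r$ to conclude two positive roots, and differentiate the smaller root with respect to $b$. The paper works with the cubic $f_{b,\theta}(r)=\tfrac{3}{2}\cos^2\theta\,r^3-br+1$ and verifies the required sign pattern by evaluating $f_{b,\theta}$ at its critical point $\sqrt{2b/(9\cos^2\theta)}$; you work instead with $g_\theta(r)=1/r+\tfrac{3}{2}\cos^2\theta\,r^2$, which is exactly $-U$ in polar coordinates, and note that $\min_r g_\theta$ maximized over $\theta$ equals $c_0$. Since $f_{b,\theta}(r)=r\bigl(g_\theta(r)-b\bigr)$ for $r>0$, the two formulations are interchangeable; what yours buys is a transparent explanation of why $c_0=3^{4/3}/2$ is the threshold (it is literally the critical value of $-U$), rather than having this drop out of the sign computation $1-\tfrac{2c_0}{3}\sqrt{2c_0/9}=0$. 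You also supply two refinements that the paper passes over quickly: an explicit implicit-function-theorem check at $\theta=\pm\pi/2$ (where $\partial_r P(1/b,\pm\pi/2)=-b\ne 0$) showing the inner branch closes up smoothly through the degenerate directions, and the asymptote $|q_1|\to\sqrt{2b/3}$ for the outer branches; the paper simply asserts continuity of roots and divergence of the larger root. Your monotonicity step, $\partial_b r_-=1/g_\theta'(r_-)<0$ because $r_-<r^*_\theta$ where $g_\theta$ is decreasing, is precisely the paper's computation $dr_{b,\theta}/db=r_{b,\theta}/(\tfrac{9}{2}\cos^2\theta\,r_{b,\theta}^2-b)<0$ written in terms of $g_\theta$ instead of $f_{b,\theta}$.
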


\begin{proof}
Let $f_{b, \theta} (r)={3\over2}(\cos^2 \theta) r^3 -b r+1$ be polynomial for fixed $b$ and $\theta$.  For $\theta \ne {\pi \over 2}, {3\pi \over 2}$, we observe the values $f_{b, \theta}(-\infty)=-\infty$, $f_{b, \theta}(0)=1>0$, $f_{b, \theta}(\sqrt{{2b \over 9 \cos^2 \theta}})=1-{2b \over 3}\sqrt{{2b \over 9 \cos^2 \theta}}<1-{2c_0 \over 3}\sqrt{{2c_0 \over 9}}=0$ and $f_{b, \theta}(+\infty)=+\infty$. This implies $f_{b, \theta}$ has one negative zero and two different positive zeros for any fixed $b$ and for any $\theta \ne {\pi \over 2}, {3\pi \over 2}$. Moreover, one can see that the larger positive zero goes to infinity as the term $\cos^2 \theta$ goes to $0$. Since the zeros vary continuously with $\theta$, the smaller positive zero goes to ${1\over b}$ as the term $\cos^2 \theta$ goes to $0$. This proves that ${3\over2}\cos^2 \theta r^3 +1=br$ consists of one closed curve and two unbounded curves. 

We define the positive smaller zero $r_{b, \theta}$ of the degree 3 polynomial $f_{b, \theta}$ for $\theta \ne {\pi \over 2}, {3\pi \over 2}$ and $r_{b, {\pi \over 2}}=r_{b, {3\pi \over 2}}={1 \over b}$. Then we have $f_{b,\theta} (r_{b,\theta})=0$ and $r_{b,\theta}<\sqrt{{2b \over 9 \cos^2 \theta}}$ by above computation. We differentiate $f_{b,\theta}(r_{b,\theta})=0$ with respect to $b$
$${9 \over 2}(\cos^2 \theta) r_{b,\theta}^2 {dr_{b, \theta} \over db}=r_b+b{dr_{b, \theta} \over db} \implies {dr_{b,\theta} \over db}={r_{b,\theta} \over {{9 \over 2}(\cos^2 \theta) r_{b,\theta}^2-b}}.$$
Since $r_{b,\theta}<\sqrt{{2b \over 9 \cos^2 \theta}}$, ${dr_{b, \theta} \over db}<0$. This implies the bounded component is getting smaller as $b$ increases. This proves the Lemma.
\end{proof}

We can see that the projection to $q$-coordinate $\pi(K^{-1}_c(0))$ of $K^{-1}_c(0)$ consists of one bounded component and two unbounded components for $c>c_0$ and the bounded component of $\pi(K^{-1}_c(0))$ is enclosed by the closed curve $\sigma_c$. We will focus on the case where $q$ is in this bounded component and so denote the bounded component of $\pi(K^{-1}_c(0))$ by $\mathfrak{R}_c$. We define the subset 
$$\Sigma_c=\{(q, p) \in K^{-1}_c (0) | q \in \mathfrak{R}_c \}$$
of $K^{-1}_c (0)$. As in Moser regularization, we regard $p$ as a position variable and $q$ as a momentum variable by using the symplectomorphism $(q, p) \mapsto (p, -q)$. Then we can regard $\Sigma_c$ as a subset of $T^* \mathbb{C}$ where $p \in \mathbb{C}$ is a position variable and $q$ is a momentum variable. We will prove that there exist $(p, q) \in \Sigma_c$ for any $p \in \mathbb{C}$ and such $q$'s form a closed curve in $T^*_p \mathbb{C}$ in the following Lemma. 

\begin{Lem}
\label{Lemma 3.2}
For any fixed $c>c_0$, the projection $pr: \Sigma_c \rightarrow \mathbb{C}, \quad pr(p, q)=p$ is surjective. Moreover, the fiber $pr^{-1}(p)$ at $p$ is a closed curve that encloses the origin for any $p \in \mathbb{C}$.
\end{Lem}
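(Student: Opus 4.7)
The plan is to show that for each $p \in \mathbb{C}$ the fiber $pr^{-1}(p) = \{q \in \mathfrak{R}_c : H(q, p) = -c\}$ is a single smooth closed curve enclosing $q = 0$, by a Morse-theoretic argument applied to $H(\cdot, p)$ on the annular region $\overline{\mathfrak{R}_c} \setminus \{0\}$. On this region $H(\cdot, p) \to -\infty$ at the collision $q = 0$, while $H(q, p) + c = \tfrac{1}{2}|p - iq|^2 \ge 0$ on the Hill boundary $\sigma_c$. If one can verify that no critical value of $H(\cdot, p)|_{\mathfrak{R}_c}$ lies at or below $-c$, then the gradient flow of $H(\cdot, p)$ deforms $\{H \le -c\}$ onto a punctured disk around the collision, so its boundary is a single smooth circle enclosing the origin, and surjectivity of $pr$ follows at once.

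The critical-point equation $\nabla_q H(q, p) = 0$ solves uniquely for $p$ as a function of $q$, namely $p_{crit}(q) = \bigl(-q_2(|q|^{-3}+1),\; q_1(|q|^{-3}-2)\bigr)$, so the critical value of $H(\cdot, p)$ at any critical point $q$ is captured by the single scalar function
\[
H_{crit}(q) := H(q, p_{crit}(q)) = \frac{1}{2|q|^4} - \frac{4 q_1^2 + q_2^2}{|q|^3} + 3 q_1^2.
\]
It therefore suffices to show $H_{crit}(q) > -c$ for every $q \in \overline{\mathfrak{R}_c} \setminus \{0\}$. In polar coordinates, $\partial_\theta H_{crit} = 3\sin(2\theta)(1 - r^3)/r$, whose zeros with $r < 1$ are confined to the coordinate axes, and the critical points of $H_{crit}$ on those axes occur at $r = 3^{-1/3}$ (for $\theta \in \{0, \pi\}$) and $r = 2^{1/3}$ (for $\theta \in \{\pi/2, 3\pi/2\}$). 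The a priori bound $r_{c, \theta} < \tfrac{3}{2c}$, which follows from $f_{c, \theta}\bigl(\tfrac{3}{2c}\bigr) = \tfrac{81 \cos^2\theta}{16 c^3} - \tfrac{1}{2} < 0$ whenever $c > c_0$, places both axial critical radii strictly outside $\overline{\mathfrak{R}_c}$. Consequently $H_{crit}$ has no interior critical points in $\mathfrak{R}_c$, and since $H_{crit} \to +\infty$ at $q = 0$ its minimum on $\overline{\mathfrak{R}_c} \setminus \{0\}$ must be attained on $\sigma_c$.

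The hard step is showing this boundary minimum is strictly greater than $-c$. Eliminating $\cos^2\theta$ via $\tfrac{1}{r} + \tfrac{3}{2} r^2 \cos^2\theta = c$ reduces $H_{crit}|_{\sigma_c}$ to the one-variable function
\[
\psi(r) = \frac{5}{2 r^4} - \frac{2c}{r^3} - \frac{3}{r} + 2c,
\]
whose scaled derivative $r^5 \psi'(r) = 3 r^3 + 6 c r - 10$ is monotone increasing in $r$. At the axial boundary point the Hill-curve relation $c r_{c, 0} = \tfrac{3}{2} r_{c, 0}^3 + 1$ together with $r_{c, 0} < \tfrac{3}{2c}$ gives $r^5 \psi'(r_{c, 0}) = 8 c r_{c, 0} - 12 \le 0$, so $\psi$ is decreasing and attains its minimum at $r = r_{c, 0}$, $\theta = 0$. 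A second use of the same axial relation collapses the remaining expression into a perfect square,
\[
H_{crit}(r_{c, 0}, 0) + c = \frac{(1 - 3 r_{c, 0}^3)^2}{2 r_{c, 0}^4},
\]
which is strictly positive precisely because $c > c_0$ forces $r_{c, 0} < 3^{-1/3}$. This identity is the heart of the lemma: the right-hand side vanishes exactly at $c = c_0$, mirroring the fact that the Lagrange points $(\pm 3^{-1/3}, 0)$ lie on $\sigma_{c_0}$ as genuine critical points of $H(\cdot, p_{crit})$ at level $-c_0$; this is where the subcritical hypothesis $c > c_0$ is used in an essential way.

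With $H_{crit} > -c$ established, $-c$ is a regular value of $H(\cdot, p)$ strictly below every critical value, and the Morse argument of the first paragraph shows that $pr^{-1}(p)$ is a single smooth closed curve enclosing the origin; in particular, $pr$ is surjective.
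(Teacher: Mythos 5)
Your argument is correct, but it takes a genuinely different route from the paper's. The paper rewrites the condition $H(q,p)=-c$ as $f(q)=g_{p,c}(q)$, where $f(q)=q_1^2-\tfrac12 q_2^2+\tfrac1{|q|}$ and $g_{p,c}$ is affine in $q$, then compares $f$ with $g_{p,c}$ on the lines $q_1=\pm 3^{-1/3}$ and near $q=0$ to show the intersection has one bounded component lying in $|q_1|<3^{-1/3}$, and finishes with a topological continuity argument (no bifurcations as $p,c$ vary, so the topology matches the large-$c$ case). You instead treat $H(\cdot,p)$ Morse-theoretically on the punctured Hill region: you solve $\nabla_q H=0$ for $p=p_{\mathrm{crit}}(q)$, package all possible critical values into the single scalar function $H_{\mathrm{crit}}(q)=\tfrac1{2|q|^4}-\tfrac{4q_1^2+q_2^2}{|q|^3}+3q_1^2$, locate its extrema in polar coordinates, and reduce the estimate to the boundary $\sigma_c$, where the one-variable function $\psi(r)=\tfrac{5}{2r^4}-\tfrac{2c}{r^3}-\tfrac3r+2c$ is monotone and the minimum collapses to the perfect square $\tfrac{(1-3r_{c,0}^3)^2}{2r_{c,0}^4}$. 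I checked the identities $p_{\mathrm{crit}}$, $H_{\mathrm{crit}}$, $\partial_\theta H_{\mathrm{crit}}=3\sin(2\theta)(1-r^3)/r$, the axial critical radii $3^{-1/3}$ and $2^{1/3}$, the a priori bound $r_{c,\theta}<\tfrac3{2c}<3^{-1/3}$, the substitution $H_{\mathrm{crit}}|_{\sigma_c}=\psi(r)$, the sign of $r^5\psi'(r_{c,0})=8cr_{c,0}-12$, and the final perfect-square identity; they are all correct. Your approach is more quantitative and exposes precisely where $c>c_0$ enters, via the vanishing of $(1-3r_{c,0}^3)^2$ at the critical energy, which is an attractive feature the paper's comparison argument does not display. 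The only place where you should be more careful is the initial Morse-theoretic deduction: the region $\overline{\mathfrak{R}_c}\setminus\{0\}$ is a non-compact annulus, and if $-Jq_0=p$ for some $q_0\in\sigma_c$ the level set $\{H=-c\}$ is tangent to $\sigma_c$ at $q_0$; one should either truncate to $\overline{\mathfrak{R}_c}\setminus B_\epsilon(0)$ and run the gradient-flow argument on a compact annulus with the outer boundary behavior handled explicitly, or observe (as one can from $\nabla_q H$ at such a point) that $-c$ is still a regular value there, so the bounded component remains a single smooth closed curve. That said, the paper's own proof is no more detailed about the analogous connectedness/uniqueness step, so this is a matter of polish rather than a gap.
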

\begin{proof}
We can give an easy geometric interpretation for the subset
\begin{eqnarray*}
\Sigma_c&=&\{ (q, p) \in K^{-1}_c (0) | q \in \mathfrak{R}_c \}
\\ &=&\{ (q, p) \in T^* \mathbb{C} | q_1^2-{1 \over 2}q_2^2+{1 \over |q|}=p_1q_2-p_2q_1+{1 \over 2}|p|^2+c, q \in \mathfrak{R}_c \}
\end{eqnarray*}
of $T^* \mathbb{C}$. If we fix the variable $p$, then the set $\{ q \in T_{p}^*\mathbb{C} | q_1^2-{1 \over 2}q_2^2+{1 \over |q|}=p_1q_2-p_2q_1+{1 \over 2}|p|^2+c\}$ can be seen as the intersection of the graphs of functions $f(q_1, q_2)=q_1^2-{1 \over 2}q_2^2+{1 \over |q|}$ and $g_{p, c}(q_1, q_2)=p_1q_2-p_2q_1+{1 \over 2}|p|^2+c$. Note that the function $g_{p, c}$ of $q$ is a linear function for any fixed $p, c$ and so its graph is a plane. We have $f(q)>g_{p, c}(q)$ for $q$ with sufficiently small $|q|$. For fixed $q_2$, we also get $f(q)>g_{p, c}(q)$ when $q_1 \rightarrow \pm \infty$. On the other hand, we have the inequality
\begin{eqnarray*}
& & g_{p, c}(\pm 3^{-1 \over 3}, q_2)-f(\pm 3^{-1 \over 3}, q_2)
\\ &=& {1 \over 2}(p_1+q_2)^2+{1 \over 2}p_2^2 \mp 3^{-1 \over 3}  p_2-3^{-2 \over 3}-{1 \over (q_2^2+3^{-2 \over 3})^{1\over 2}}+c
\\ &>&{1 \over 2}(p_1+q_2)^2+{1 \over 2}(p_2 \mp 3^{-1 \over 3})^2+{3^{4 \over 3} \over 2}-3^{-2 \over 3}-{3^{-2 \over 3} \over 2}-3^{1 \over 3}
\\ &=& {1 \over 2}(p_1+q_2)^2+{1 \over 2}(p_2 \mp 3^{-1 \over 3})^2 \ge 0
\end{eqnarray*}
Thus $g_{p, c}>f$ along the lines $q_1=3^{-1 \over 3}$ for any $p, c$. Thus the intersection consists of two unbounded components lying in the regions of $q_1>3^{-1 \over 3}$ and $q_1<-3^{-1 \over 3}$, respectively, and one bounded component lying in $-3^{-1 \over 3}<q_1<3^{-1 \over 3}$. Since the plane does not pass the critical points, that component is a one dimensional submanifold and the topology is same for any $p, c$. Thus we know this $q_1$-bounded component is a closed curve by observing the case where $c$ is sufficiently large. Also, we know this closed curve encloses the origin because $f>g_{p, c}$ near the origin for any $p, c$. This proves Lemma 3.2.
\end{proof}

Using Lemma \ref{Lemma 3.2}, we can interpret  $pr : \Sigma_c \rightarrow \mathbb{C}$ as a fiber subbundle of $T^* \mathbb{C}$ with fiber a circle. By one point compactification, we can think of $\mathbb{C} \subset S^2$ and also $\Sigma_c \subset T^* \mathbb{C} \subset T^* S^2$ using stereographic projection as in Moser regularization. If every fiber in the cotangent plane bounds a convex region, which contains the origin, then we can think of $\Sigma_c$ as a unit cotangent bundle of some Finsler metric. As a result, its Hamiltonian flow can be interpreted as the geodesic flow on $S^2$ for a Finsler metric. We formulated two statements \hyperlink{F1}{$\bold{(F1)}$}, \hyperlink{F2}{$\bold{(F2)}$} which are equivalent to Theorem \ref{main theorem}. For \hyperlink{F1}{$\bold{(F1)}$}, we have to show that the closure $\overline{\Sigma_c}$ is a submanifold of $T^* S^2$. The problem of being a submanifold can occur only at the north pole. That is, we have to check if it has a unique limit in $T^* S^2$ when $|p|$ goes to infinity. This can be verified by looking at the fiber when $|p| \rightarrow \infty$. Let us use the notations in Lemma \ref{Lemma 3.2}. Since $q$ lies on the bounded set, $g_{p, c}(q)$ goes to infinity when $|p| \rightarrow \infty$ for any $c$. Thus, if $q_{\nu}$ is a sequence in the bounded region satisfying $f(q_{\nu})=g_{p_{\nu}, c}(q_{\nu})$ for $p_{\nu} \rightarrow \infty$, then $q_{\nu} \rightarrow 0$. Therefore the equation $f(q)=g_{p, c}(q)$ converges to the equation ${1 \over |q|}={1 \over 2}|p|^2+c$ which is the equation of the Kepler problem and so the limit at the north pole in any direction will correspond to the circle with radius ${1 \over \sqrt{-2c}}$ of the round metric. Therefore the closure $\overline{\Sigma _c}$ in $T^* S^2$ is a subbundle over $S^2$ and this proves \hyperlink{F1}{$\bold{(F1)}$}. Moreover, we have convex fiber at the north pole. Thus, from now on we can regard $p \in S^2$ as an element of $\mathbb{C} \cong \mathbb{R}^2$ when we discuss \hyperlink{F2}{$\bold{(F2)}$}, because we have already proved \hyperlink{F2}{$\bold{(F2)}$} at the north pole.

We investigate the region that $q$ can lie on. We will call this region Hill's region and will denote it by $\mathfrak{R}$. By Lemma \ref{Lemma 3.1} and \ref{Lemma 3.2}, we get the region
\begin{eqnarray*}
\mathfrak{R}&:=&\bigcup _{c>c_0} {\mathfrak{R}_c}=\bigcup _{c>c_0}{\pi(H^{-1}(-c))^b}
\\ &=&\{(q_1, q_2) \in \mathbb{R}^2 | {1\over \sqrt{q_1^2+q_2^2}}+{3 \over 2}q_1^2>c_0, |q_1|<3^{-1 \over 3}, |q_2|<2 \cdot 3^{-4 \over 3}\}
\end{eqnarray*}
where $X^b$ means the bounded component of $X$. It is illustrated as the bounded region enclosed by two curves in Figure \ref{fig 1}.

\begin{figure}
\centering
\includegraphics[]{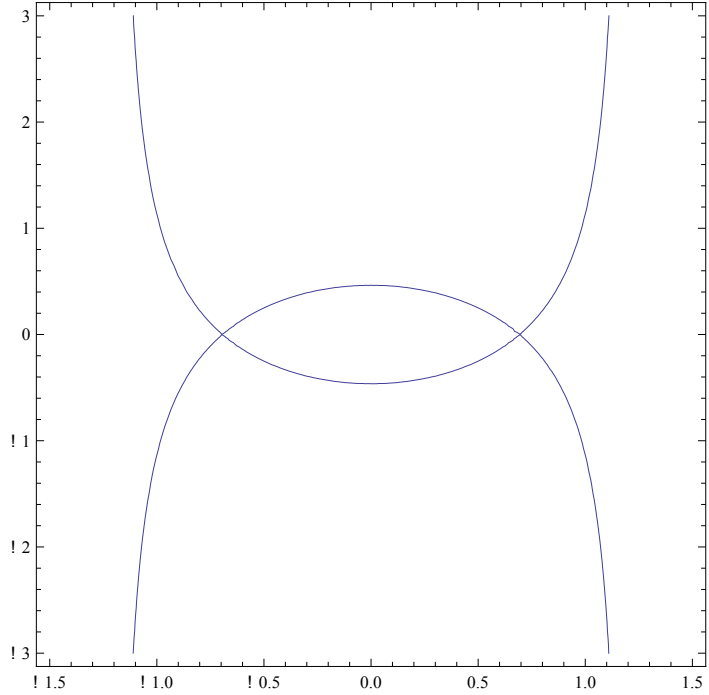}
\caption{Hill's region: The bounded part}
\label{fig 1}
\end{figure}

Since the coordinate change of a cotangent bundle induced from a coordinate change on its base manifold is linear on each cotangent space and a linear map preserves the convexity, $\overline{\Sigma_c} \cap T^*_{p} (S^2)$ bounds a strictly convex region if and only if $\Sigma_c \cap T^*_{p} \mathbb{C}$ bounds a strictly convex region under the stereographic projection for every $p \in S^2 \backslash \{ \textrm{north pole} \}$. Because we have already proved the strict convexity of the fiber at the north pole, we can reduce the problem to $\Sigma_c$ and we can regard $\Sigma_c$ as a fiber bundle over $\mathbb{C}$ for a fixed energy level $c>c_0$. For any $p \in \mathbb{C}$, the fiber $F_{c, p} =\{q \in \mathbb{R}^2 | (p, q) \in \Sigma_c \}$ of this bundle is a closed curve. Then we want to show that this fiber bounds a strictly convex region which contains the origin. The fact that this encloses the origin is already proved in Lemma \ref{Lemma 3.2}.

If we define $K_{c, p}: \mathbb{R}^2 \rightarrow \mathbb{R}$ by $K_{c, p}(q):=K_c(q, p)$, then we need to prove that the bounded component of $K_{c, p}^{-1}(0)$ bounds a strictly convex region for every fixed $p \in \mathbb{R}^2$ and $c>c_0$. Since $K_c$ and $H_c$ have the same 0 energy hypersurface, this is equivalent to prove that the bounded component of $H_{c, p}^{-1}(0)$ bounds a strictly convex region for every $p \in \mathbb{R}^2$ and $c>c_0$ where $H_{c, p}(q)=H_c (q, p)$. If the Hessian $HessH_{c, p}(q)$ of $H_{c, p}$ is positive definite, then its level curve bounds strictly convex region. However, this is not true and in fact we will see $HessH_{c, p}(q)$ has one positive eigenvalue and one negative eigenvalue. Thus we have to consider the tangential Hessian in order to check the strict convexity of the level curves. The tangential Hessian means simply the restriction of Hessian to the tangent space of the energy level set. Because level sets of $H_{c, p}$ are curves, it suffices to show the positivity only for one nonzero tangent vector of $H_{c, p}^{-1}$ and this tangent vector can be obtained simply by rotating the gradient vector $90$ degrees. We can state \hyperlink{F2}{$\bold{(F2)}$} numerically by the following Theorem.
\begin{Thm}
\label{Thm 3.3}
Suppose $q$ lies on a fiber at $p$ in $\Sigma_c$, equivalently $q \in \mathfrak{R} \cap H_{c, p}^{-1}(0)$ for $c>c_0$ and $p \in \mathbb{R}^2$. Then the inequality 
$$(J \triangledown H_{c, p}(q))^t HessH_{c, p}(q)(J \triangledown H_{c, p}(q))>0$$
holds where $J=\begin{pmatrix} 0 & 1 \\ -1 & 0 \end{pmatrix}$ is ${\pi \over 2}$ rotation.
\end{Thm}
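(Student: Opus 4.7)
The plan is to reduce the tangential positivity condition to a single algebraic inequality on the Hill's region and then establish it by combining the energy constraint $H_{c,p}(q)=0$ with the explicit size estimates for $\mathfrak{R}$. First, I would exploit the decomposition $H(q,p) = \tfrac{1}{2}|p+Jq|^{2} + U(q)$ with $U(q) = -1/|q| - \tfrac{3}{2}q_1^{2}$. Because the kinetic term has constant Hessian $I$, one obtains
\[
\mathrm{Hess}\,H_{c,p}(q) \;=\; \mathrm{diag}(-2,1) \;+\; \frac{1}{|q|^{3}}\,I \;-\; \frac{3}{|q|^{5}}\,q\, q^{t}.
\]
Setting $\alpha := 1+1/|q|^{3}$ and $\beta := 2-1/|q|^{3}$, a direct computation gives $v := J\nabla H_{c,p} = (p_1+\alpha q_2,\; p_2+\beta q_1)$, and using $\alpha+\beta=3$ one also gets $q\cdot v = \langle q,p\rangle + 3q_1 q_2$. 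Substituting into the quadratic form yields the clean identity
\[
E \;:=\; v^{t}\,\mathrm{Hess}\,H_{c,p}(q)\,v \;=\; \alpha\, v_2^{2} \;-\; \beta\, v_1^{2} \;-\; \frac{3(q\cdot v)^{2}}{|q|^{5}}.
\]

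Next I would pin down the sign of $\beta$. The description of $\mathfrak{R}$ in this section gives $|q_1|<3^{-1/3}$ together with $1/|q|+\tfrac{3}{2}q_1^{2}>c_0=3^{4/3}/2$, and these together force $1/|q|>3^{1/3}$ and hence $|q|^{3}<1/3$. Consequently $-\beta>1$ and $\alpha>4$ throughout $\mathfrak{R}$, so $\alpha v_2^{2}-\beta v_1^{2}$ is strictly positive whenever $v\neq 0$, which holds on the regular level curve by Lemma \ref{Lemma 3.2}. Theorem \ref{Thm 3.3} therefore reduces to the sharper inequality
\[
\alpha\, v_2^{2} \;-\; \beta\, v_1^{2} \;>\; \frac{3(q\cdot v)^{2}}{|q|^{5}}.
\]

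The last step is to use the energy constraint $H_{c,p}(q)=0$, equivalently $|p+Jq|^{2} = 2/|q|+3q_1^{2}-2c$, to eliminate the norm of $p+Jq$; since $\langle q,Jq\rangle=0$ also gives $\langle q,p\rangle=\langle q,p+Jq\rangle$, the only remaining freedom is the direction of $p+Jq$, which I would parametrize by an angle $\psi$. After clearing $|q|^{5}$, the inequality $E>0$ becomes a polynomial inequality in $(q_1,q_2,c,\psi)$ to be checked on $\mathfrak{R}\times(c_0,\infty)\times S^{1}$. This final step is where I expect the main obstacle: a naive Cauchy--Schwarz bound of $(q\cdot v)^{2}$ against $\alpha v_2^{2}-\beta v_1^{2}$ is not sufficient --- the two quadratic forms are not uniformly comparable --- and the margin in $E$ collapses at the boundary of $\mathfrak{R}$ where $|p+Jq|\to 0$. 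My strategy would be to worst-case $E$ over $\psi$ using the constraint, then split the remaining $(q_1,q_2,c)$ domain by the sign of $q_1 q_2$ and absorb the mixed term $3q_1 q_2\,\langle q,p+Jq\rangle$ by completing the square inside $\alpha v_2^{2}-\beta v_1^{2}$, reducing at worst to a one-variable polynomial positivity in $|q|\in (0,3^{-1/3})$ that can be verified using the sharp bounds $|q_1|<3^{-1/3}$, $|q_2|<2\cdot 3^{-4/3}$, and $c>c_0=3^{4/3}/2$.
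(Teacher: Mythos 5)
Your reformulation of the tangential Hessian is correct and clean: the decomposition $\mathrm{Hess}\,H_{c,p}(q) = \mathrm{diag}(-2,1) + |q|^{-3}I - 3|q|^{-5}qq^{t}$ does give
$$E = v^{t}\,\mathrm{Hess}\,H_{c,p}(q)\,v = \alpha v_{2}^{2} - \beta v_{1}^{2} - \frac{3(q\cdot v)^{2}}{|q|^{5}},$$
and this is algebraically equivalent to the matrix $\mathcal{H}(q)$ the paper writes out in full; the bounds $|q|^{3}<1/3$, $\alpha>4$, $-\beta>1$ on $\mathfrak{R}$ are also correct. However, the argument stops precisely where the actual proof begins. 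The entire content of Theorem~\ref{Thm 3.3} is the sharper inequality $\alpha v_{2}^{2}-\beta v_{1}^{2}>3(q\cdot v)^{2}/|q|^{5}$, and you never prove it — you explicitly acknowledge it as ``the main obstacle'' and then only gesture at a strategy (``worst-case over $\psi$, split by sign, complete the square, reduce to a one-variable polynomial''). That is a plan, not a proof.

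The plan as stated also underestimates the difficulty. After substituting $v = s + w(q)$ with $s = p+Jq$ and using the energy constraint, the quantity to bound is a quadratic in $s$ whose Hessian is the indefinite $\mathcal{H}(q)$; minimizing it over the closed constraint disk is already a nontrivial step (the paper's Lemmas~\ref{Lem 4.6}--\ref{Lem 4.8} show the minimizer lies on the boundary circle and localize its angle), and after that one is still left with three real parameters, not one. Worse, the inequality degenerates to an equality in the limit $q \to (3^{-1/3},0)$ with $|s|\to 0$, so no single ``one-variable polynomial positivity'' absorbs the behaviour uniformly near the critical point; the paper must introduce blow-up coordinates $(r,k)$ there (Lemma~\ref{Lem 5.2}), prove monotonicity of a quotient in $r$, and identify the boundary value as a sum of squares. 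Even the regions far from the critical point require carefully tailored radial estimates whose positivity is verified by computer in the appendix. In short, the identity and the reduction you record coincide with the paper's starting point (it is its Theorem~\ref{Thm 4.3} after the translation $s=p+Jq$), but the proof proposal contains none of the case splitting, tangent-line, convexity, or blow-up arguments that carry the burden, and the final reduction you claim is not achievable by completing a square.
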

Therefore we can reduce our problem into an inequality problem with some constraints. Moreover, we do not need to care about the fiber bundle structure. Namely, it suffices to show that the inequality $((J \triangledown H_{c, p}(q)))^t HessH_{c, p}(q)(J \triangledown H_{c, p}(q)) >0$ for all possible $(q, p)$ instead of seeing the bounded component of $H_{c, p}^{-1}(0)$ for fixed $p$.
We devote the remaining part of this paper to the proof of Theorem \ref{Thm 3.3}.

\begin{Rem}
\label{Rem 3.1}
There are symmetries of Hill's lunar problem. The reflections
$$R_1 : (q_1, q_2, p_1, p_2) \longmapsto (-q_1, q_2, p_1, -p_2), \quad R_2 : (q_1, q_2, p_1, p_2) \longmapsto (q_1, -q_2, -p_1, p_2)$$
are anti-symplectic. $H_{HLP}$ is invariant under these maps, namely $H_{HLP}(R_{i}(q, p))=H_{HLP}(q, p)$ for all $(q, p) \in \mathbb{R}^4$ and $i \in \{1, 2\}$. These will allow us to concentrate only on the first quadrant of $q$-coordinate.
\end{Rem}

\section{Preparation and Strategy}
\label{preparation and strategy}

The proof of Theorem \ref{Thm 3.3} consists of many complicated computations and notations. In section \ref{preparation}, we will introduce the necessary notations and derive Theorem \ref{Thm 4.3} which is stronger than Theorem \ref{Thm 3.3}. We apply many elementary methods to prove Theorem \ref{Thm 4.3}. The list of Propositions and Lemmas will be given and we will explain their relations and meanings in section \ref{strategy}. As one can immediately see in Theorem \ref{Thm 3.3}, the dimension of parameters for this problem is 4. Through computations, we try to reduce this dimension by building lower bounds or finding the subset where the minimum is attained.  When we achieve the reduction to 1 dimensional problem, we will provide computer plots of the graph to determine if the final term is positive or negative. The computer plots will be rigorously verified with a computer program in Appendix \ref{pf of fig}. The hardest part of this proof occurs near the critical point because the tangential Hessian goes to $0$ near the critical point. We will use the blow up coordinates to overcome this problem.

\subsection{Preparation for the proof of Theorem \ref{Thm 3.3}.}
\label{preparation}

We do not need to prove $p=0$ case separately, because this case will be covered by the general case, we will prove this case in order to introduce notations and to help understanding.

We compute the gradient and Hessian
$$\triangledown H_{c, 0}(q)=
\begin{pmatrix} -2q_1+{q_1 \over |q|^3} 
\\ q_2+{q_2 \over |q|^3} 
\end{pmatrix}, \quad
HessH_{c, 0}(q)={1 \over |q|^5} 
\begin{pmatrix} -2|q|^5+|q|^2-3q_1^2 & -3q_1 q_2 
\\ -3q_1 q_2 & |q|^5+|q|^2-3q_2^2 
\end{pmatrix}$$
of $H_{c,0}(q)=-q_1^2+{1\over2}q_2^2-{1\over|q|}+c$ when $p=0$. As we discussed before, we will see the tangential Hessian and so we need a tangent vector of the level curve. For the notational convenience, we define $v(q) \in T_{q}H_{c, 0}^{-1}(0)$ and $\mathcal{H}(q)$ for $q \in H_{c, 0}^{-1}(0)$ as follows.
$$v(q):=J \triangledown H_{c, 0}(q)=
\begin{pmatrix} q_2+{q_2 \over |q|^3} 
\\ 2q_1-{q_1 \over |q|^3} 
\end{pmatrix} 
\textrm{  where  } J=\begin{pmatrix} 0 & 1 \\ -1 & 0 \end{pmatrix}$$
$$\mathcal{H}(q):=Hess H_{c,0}(q)=
{1 \over |q|^5} \begin{pmatrix} -2|q|^5+|q|^2-3q_1^2 & -3q_1 q_2 
\\ -3q_1 q_2 & |q|^5+|q|^2-3q_2^2 
\end{pmatrix}$$
Then we can express the tangential Hessian $(J \triangledown H_{c, p}(q))^t HessH_{c, p}(q)(J \triangledown H_{c, p}(q))$ as a function
\begin{eqnarray*}
v(q)^t \mathcal{H}(q) v(q)&=&{1\over |q|^{11}}\Big[q_2^2(-2|q|^5+|q|^2-3q_1^2)(1+|q|^3)^2-6q_1^2 q_2^2 (1+|q|^3)(2|q|^3-1)
\\ & &+q_1^2 (|q|^5 +|q|^2 -3q_2^2)(2|q|^3 -1)^2 \Big]
\end{eqnarray*}
of variable $q$. As we discussed in section \ref{interpretation}, the curves $K_{c, 0}^{-1}(0)$ bound strictly convex domains if and only if the inequalities $v(q)^t \mathcal{H}(q) v(q)>0$ hold for all $q \in H_{c,p}^{-1}(0)$. Therefore, we have to show the following 'Warm-up Lemma'  in order to prove the case $p=0$.

\begin{Lem}[Warm-up Lemma]
\label{Lemma 4.1}
Suppose $q \in \mathfrak{R} \cap H_{c,0}^{-1}(0)$ for $c>c_0$. Then the tangential Hessian of $H_{c, 0}$ at $q$ is positive definite, namely the inequality $v(q)^t \mathcal{H}(q) v(q)>0$ holds for every $q \in \mathfrak{R} \cap H_{c,0}^{-1}(0)$ and for every $c>c_0$.
\end{Lem}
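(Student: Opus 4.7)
The plan is to use the symmetries $R_1, R_2$ of Remark \ref{Rem 3.1} to restrict to the first quadrant $q_1, q_2 \geq 0$, pass to polar coordinates $q_1 = r \cos \theta$, $q_2 = r \sin \theta$, and rewrite the tangential Hessian as a rational function of $x = \cos^2 \theta$ and $s = r^3$. Substituting $q_1^2 = r^2 x$ and $q_2^2 = r^2 (1 - x)$ into the explicit formula for $v(q)^t \mathcal{H}(q) v(q)$ displayed just above the statement, and simplifying after cancelling $r^7$ from the denominator, I expect to obtain
\[
v(q)^t \mathcal{H}(q) v(q) = \frac{P(x, s)}{r^7}, \qquad P(x, s) = 27 s^2 x^2 + 3 s (2 s^2 - 8 s - 1) x + (1 - 2 s)(1 + s)^2.
\]
This reduces the Warm-up Lemma to the polynomial inequality $P(x, s) > 0$ on the admissible subset of $[0, 1] \times (0, \infty)$.

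To describe that subset, the level condition $H_{c, 0}(q) = 0$ gives $c = \frac{1}{r} + \frac{r^2 (3x - 1)}{2}$, and $c > c_0 = \frac{3^{4/3}}{2}$ becomes $(3x - 1) s + 2 > 3^{4/3} s^{1/3}$. Since $x \leq 1$, this forces $2(s + 1) > 3^{4/3} s^{1/3}$, and cubing both strictly positive sides yields the key bound $8(s + 1)^3 > 81 s$, equivalently $p(s) := 8 s^3 + 24 s^2 - 57 s + 8 > 0$. From $p(0) = 8 > 0$ and the direct evaluation $p(1/6) = -\tfrac{43}{54} < 0$, the smallest positive root of $p$ lies strictly below $1/6$, so the admissible region is contained in $\{0 < s < 1/6\}$.

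The last step is to show $P(x, s) > 0$ on this strip for every $x \in [0, 1]$. Since $P$ is quadratic in $x$ with positive leading coefficient $27 s^2$, positivity for all $x$ is equivalent to negativity of the discriminant $\Delta_x = 9 s^2 Q(s)$, where $Q(s) := 4 s^4 - 8 s^3 + 96 s^2 + 16 s - 11$. A short check reveals $Q''(s) = 48 (s^2 - s + 4) > 0$ (the quadratic factor has negative discriminant $1 - 16$) and $Q'(0) = 16 > 0$, so $Q$ is strictly increasing on $[0, \infty)$; combined with $Q(1/6) < 0$ by direct evaluation, this forces $Q(s) < 0$ throughout $[0, 1/6)$, hence $\Delta_x < 0$ and $P > 0$ as desired. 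The main obstacle is really the algebraic bookkeeping leading to the explicit polynomial $P$; the conceptual content is that at $p = 0$ the energy level never approaches the critical points $(\pm 3^{-1/3}, 0, 0, \pm 3^{-1/3})$ of $H$, so we obtain the robust bound $s < 1/6 < 1/3$ and avoid the near-critical degeneracy that will require blow-up coordinates in the proof of the full Theorem \ref{Thm 3.3}.
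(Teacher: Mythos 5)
Your polynomial reduction is correct: with $q_1^2 = r^2 x$, $q_2^2 = r^2(1-x)$, $s=r^3$, expanding the displayed formula $v^t\mathcal{H}v = \tfrac{1}{r^7}-\tfrac{3q_1^2}{r^6}-\tfrac{27q_1^2q_2^2}{r^5}-\tfrac{3q_2^2}{r^3}+4q_1^2-2q_2^2$ indeed yields $r^7 v^t\mathcal{H}v = 27s^2x^2+3s(2s^2-8s-1)x+(1-2s)(1+s)^2 = P(x,s)$, your level and energy constraints translate correctly, and the discriminant computation $b^2-4ac = 9s^2Q(s)$ with $Q(s)=4s^4-8s^3+96s^2+16s-11$, $Q''>0$, $Q'(0)>0$, $Q(1/6)<0$ all check out. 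This is a genuinely different and cleaner route than the paper: the paper's proof of Lemma 4.1 bounds $|q| < 0.54$ via $|q|^3-c_0|q|+1>0$, then replaces $q_1^2, q_2^2$ by crude term-by-term upper bounds to get a one-variable function of $r=|q|$ whose positivity is checked by a computer plot (Figure 2, verified in Appendix A). Your version keeps the angular variable $x=\cos^2\theta$ exactly, reduces to a single two-variable polynomial inequality, and settles it by hand with a discriminant and convexity argument---no numerics required.

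However, there is a genuine gap in the step ``the smallest positive root of $p$ lies strictly below $1/6$, so the admissible region is contained in $\{0<s<1/6\}$.'' The cubic $p(s)=8s^3+24s^2-57s+8$ has \emph{two} positive roots $s_1<s_2$ (indeed $p(1/2)=-13.5<0$ and $p(2)=54>0$), so $p(s)>0$ on $(0,\infty)$ holds on the disjoint union $(0,s_1)\cup(s_2,\infty)$, and nothing you have written excludes the second branch. What saves the argument---and what you must make explicit---is the definition of $\mathfrak{R}$: since $|q_1|<3^{-1/3}$ and $|q_2|<2\cdot 3^{-4/3}$, one has $|q|^2 < 3^{-2/3}(1+4/9)=13\cdot 3^{-8/3}$, hence $s=|q|^3 < 13^{3/2}/81 \approx 0.579$; a direct check such as $p(1)=-17<0$ shows $s_2>1>0.579$, so the branch $(s_2,\infty)$ is never reached and the admissible $s$ does lie in $(0,s_1)\subset(0,1/6)$. (This is exactly the role the explicit bounds in $\mathfrak{R}$ play in the paper's own argument, where they combine with $|q|^3-c_0|q|+1>0$ to force $|q|<\alpha<0.54$ rather than $|q|>\beta$.) With that one-line repair in place your proof is complete and rigorous.
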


\begin{proof}
If we take $q \in H_{c,0}^{-1}(0)$, then $q$ satisfies the equation
$$q_1^2-{1\over2}q_2^2+{1\over|q|}=c>c_0.$$
This implies the inequality
$$|q|^2+{1\over|q|}>c_0 \iff |q|^3-c_0 |q|+1>0$$
for $|q|$. We have that $|q|$ is less than the smallest positive zero, say $\alpha$, of the polynomial $x^3-{3^{4 \over 3} \over 2}x+1$ and so $|q|<\alpha<0.54$. Thus it suffices to prove the function $v(q)^t \mathcal{H}(q) v(q)$ is positive for every $|q|<0.54$. We have the expression of  $v(q)^t \mathcal{H}(q) v(q)$ 
$$v(q)^t \mathcal{H}(q) v(q)={1\over |q|^7}-{3q_1^2 \over |q|^6}-{27q_1^2 q_2^2 \over |q|^5}-{3q_2^2 \over |q|^3}+4q_1^2 -2q_2^2$$
in terms of $q$. Since the following inequalities 
$${3q_1^2 \over |q|^6}+{3q_2^2 \over |q|^3} \le {3q_1^2+3q_2^2 \over |q|^6}={3 \over |q|^4}, \quad
{27q_1^2 q_2^2 \over |q|^5} \le {27 \over 4}{1\over |q|}$$
hold for all $|q|<0.54$. We get the following estimate.
$$v(q)^t \mathcal{H}(q) v(q) \ge {1\over |q|^7}-{3\over |q|^4}-{27\over4}{1\over |q|}-2|q|^2$$
As we can see in the graph of $y={1 \over x^7}-{3 \over x^4}-{27 \over 4}{1 \over x}-2x^2$ in Figure \ref{fig 2}, we have ${1 \over x^7}-{3 \over x^4}-{27 \over 4}{1 \over x}-2x^2>0$ for all $x \in (0, 0.54)$. Therefore we have $v(q)^t \mathcal{H}(q) v(q) > 0$ for all $|q|<0.54$ and this proves Lemma \ref{Lemma 4.1}.
\end{proof}

\begin{figure}
\centering
\includegraphics[]{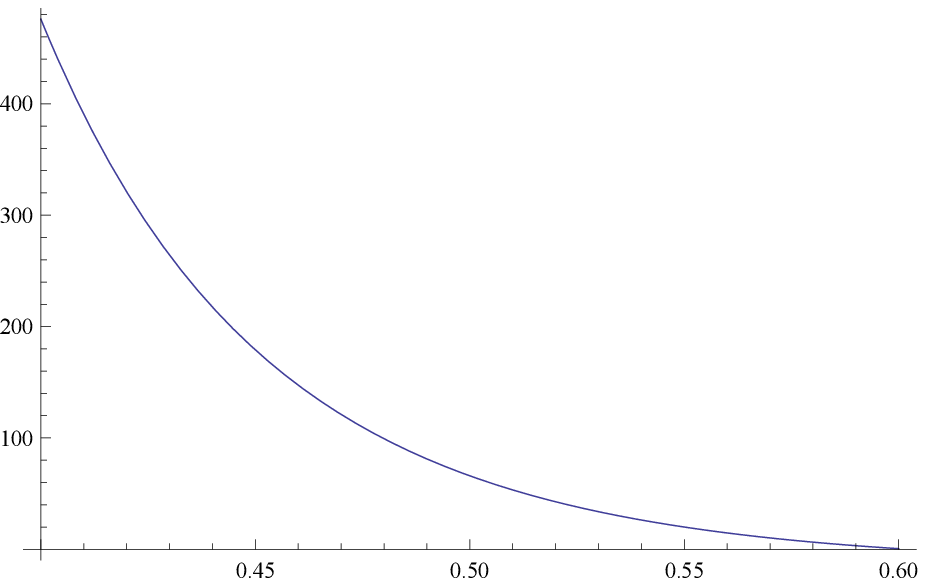}
\caption{Graph of $f_2(x)={1 \over x^7}-{3 \over x^4}-{27 \over 4}{1 \over x}-2x^2$ shows that it is positive on [0, 0.54].}
\label{fig 2}
\end{figure}

Now we consider general $p \in \mathbb{R}^2$. We recall the function $H_{c, p}: \mathbb{R}^2 \rightarrow \mathbb{R}$
$$H_{c,p}(q)={1\over2}|p|^2+p^t Jq-q_1^2+{1\over2}q_2^2-{1\over|q|}+c$$
defined for each $p \in \mathbb{R}^2$ and $c>c_0$. We calculate the gradient, tangent vector and Hessian of $H_{c, p}$.
$$\triangledown H_{c,p}(q)=\begin{pmatrix}-2q_1+{q_1 \over |q|^3}-p_2 \\ q_2+{q_2 \over |q|^3}+p_1 \end{pmatrix}, \quad J\triangledown H_{c,p}(q)=\begin{pmatrix} q_2+{q_2 \over |q|^3}+p_1 \\ 2q_1-{q_1 \over |q|^3}+p_2, \end{pmatrix}=v(q)+p$$
$$Hess H_{c, p}(q)=Hess H_{c, 0}(q)={1 \over |q|^5} \begin{pmatrix} -2|q|^5+|q|^2-3q_1^2 & -3q_1 q_2 \\ -3q_1 q_2 & |q|^5+|q|^2-3q_2^2 \end{pmatrix}=\mathcal{H}(q)$$
for every $q \in H_{c, p}^{-1}(0)$. We can express the tangential Hessian 
$$(J\triangledown H_{c,p}(q))^t Hess H_{c, p}(q) (J\triangledown H_{c,p}(q))=(v(q)+p)^t \mathcal{H}(q) (v(q)+p)$$
using $v(q)$ and $\mathcal{H}(q)$. We can rewrite Theorem \ref{Thm 3.3} with this notations.

\begin{Thm}
\label{Thm 4.2}
Suppose that $q \in \mathfrak{R} \cap H_{c, p}^{-1}(0)$ for $p \in \mathbb{R}^2$ and $c>c_0$.
Then the inequality $(v(q)+p)^t \mathcal{H}(q) (v(q)+p)>0$ holds.
\end{Thm}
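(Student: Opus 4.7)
The plan is to reformulate Theorem \ref{Thm 4.2} as a constrained optimisation in four real parameters and then progressively reduce the dimension until a one-variable inequality, amenable to the rigorous plotting of Appendix \ref{pf of fig}, remains. To start, Remark \ref{Rem 3.1} lets me assume without loss of generality that $q$ lies in the closed first quadrant.

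Next I would repackage the constraints. Completing the square in the energy equation $H_{c,p}(q)=0$ gives
\[
\tfrac{1}{2}\,|p+Jq|^{2}\;=\;\tfrac{3}{2}q_{1}^{2}+\tfrac{1}{|q|}-c,
\]
so requiring ``$H_{c,p}(q)=0$ for some $c>c_{0}$'' amounts to $|p+Jq|^{2}<3q_{1}^{2}+2/|q|-2c_{0}$ together with $q\in\mathfrak{R}$. Setting $w(q):=v(q)-Jq=\bigl(q_{2}/|q|^{3},\,q_{1}(3-1/|q|^{3})\bigr)$ and $u:=p+Jq$, the target becomes
\[
T(q,u)\;=\;(w(q)+u)^{t}\mathcal{H}(q)(w(q)+u),\qquad |u|^{2}<3q_{1}^{2}+\tfrac{2}{|q|}-2c_{0}.
\]
Since $\mathcal{H}(q)$ has signature $(1,1)$ throughout $\mathfrak{R}$, $T$ is a saddle-type quadratic in $u$ vanishing at $u=-w(q)$, and the task is to show that the open admissible $u$-disk never meets the negative cone of this form.

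Away from the critical points of $U$ I would produce lower bounds on $T$ by splitting
\[
T\;=\;w^{t}\mathcal{H}w+2u^{t}\mathcal{H}w+u^{t}\mathcal{H}u,
\]
writing $u=s(\cos\theta,\sin\theta)$, minimising in $\theta$ (controlling the cross term by Cauchy--Schwarz and the last term by the smaller eigenvalue of $\mathcal{H}$), and pushing $s$ up to its constraint bound. Combined with $q_{2}^{2}\le|q|^{2}-q_{1}^{2}$ and the upper bound on $|q|$ forced by $q\in\mathfrak{R}$, this should reduce everything to a one-variable expression in $|q|$ whose positivity can be verified by a rigorous plot, in the spirit of the Warm-up Lemma \ref{Lemma 4.1}.

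The main obstacle is the regime near the two critical points $(\pm 3^{-1/3},0)$ of $U$. There $w(q)$, the maximal admissible radius $\sqrt{3q_{1}^{2}+2/|q|-2c_{0}}$, and hence $T$ itself all vanish simultaneously, so naive estimates cannot separate a positive leading contribution from competing error terms of the same order. I would handle this by introducing blow-up coordinates $q=(\pm 3^{-1/3}+\varepsilon\xi_{1},\,\varepsilon\xi_{2})$ with matching rescalings $u=\varepsilon^{\beta}\eta$ and $c-c_{0}=\varepsilon^{\gamma}\kappa$ chosen so that, after dividing $T$ by the appropriate power of $\varepsilon$, the leading term is a non-trivial quadratic form on a compact rescaled admissible region; positivity of that leading form then reduces, by another angular minimisation, to a one-variable inequality checked by a rigorous plot. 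Combining the generic-region estimate with the local blow-up analysis around each critical point should yield $T>0$ throughout $\Sigma_{c}$ for every $c>c_{0}$.
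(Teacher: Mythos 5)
Your reformulation ($w(q) = v(q)-Jq$, $u = p+Jq$, symmetry reduction to the first quadrant, crude estimate far from the critical point, blow-up near it) is exactly the architecture the paper uses, so the broad plan is sound. But there are two concrete gaps in the way you propose to execute it.

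First, your generic-region estimate (Cauchy--Schwarz on the cross term, smaller eigenvalue of $\mathcal{H}$ for the last term, then push $|s|$ to its bound) is precisely the paper's Proposition 4.4, and in the paper it is only strong enough for $|q|<0.54$; it degenerates well before the blow-up kicks in. Conversely, the paper's blow-up argument (dividing by $(3^{-1/3}-|q|)^2$ and proving monotonicity of the quotient in $r$) is only established for $|q|\ge 0.63$. There is a genuine intermediate annulus $0.54<|q|<0.63$ that neither of your two regimes covers, and the paper needs a third technique there — a convexity/supporting-tangent-line argument (Lemmas 4.7--4.9 and Propositions 4.10--4.11) — to close the gap. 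Your two-regime plan would stall in this annulus. Second, and more structurally, in the near-critical regime you cannot simply dispose of the two-dimensional $s$-optimisation by "another angular minimisation" at leading order: the paper first proves (Proposition 4.5, via Lemmas 4.6--4.8) that because $\mathcal{H}$ has signature $(1,1)$ the minimum over the admissible $s$-disk is attained on its boundary, that the restriction to the boundary circle has a unique local minimum, and that this minimum lies in a quarter-arc $[\theta,\theta+\pi/2]$. This disk-to-arc reduction is what makes the subsequent one-parameter tangent-line and blow-up estimates tractable; without it, the $s$-dependence near the critical point is not genuinely reduced. Your sketch does not supply a mechanism for this reduction, and a purely leading-order blow-up cannot substitute for it because the quantities $w(q)$, the admissible $|s|$, and the sign-indefinite form $\mathcal{H}$ all contribute at the same order.
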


It is hard to see that the numerical relation of $p, q$ and $c$ in Theorem \ref{Thm 4.2}. In particular, it is difficult to describe the trajectory of $q$ for a fixed $p$ and for some $c>c_0$. However, the corresponding $p$ to a fixed $q \in \mathfrak{R}$ form a disk with center $(-q_2, q_1)$. From the following equivalences
$$q \in \mathfrak{R} \cap H_{c, p}^{-1}(0) \textrm{ for some } c>c_0 \iff (q, p) \in H_c^{-1}(0) \textrm{ for some } c>c_0$$
$$\iff \begin{cases} {1\over \sqrt{q_1^2+q_2^2}}+{3 \over 2}q_1^2=b>c_0 \textrm{ and } \\  (p_1+q_2)^2 +(p_2-q_1)^2<2(b-c_0) \end{cases},$$
we have the set
$$\{ p \in \mathbb{R}^2 | q \in \mathfrak{R} \cap H_{c, p}^{-1}(0) \textrm{ for some } c>c_0 \} 
=\{p \in \mathbb{R}^2 | (p_1+q_2)^2 +(p_2-q_1)^2<2({1\over \sqrt{q_1^2+q_2^2}}+{3 \over 2}q_1^2-c_0) \}$$
of $p$ for a fixed $q$ with a simple inequality. We introduce new variables $w(q), s$ obtained by translations. If we set $s:=p+Jq$, then we can simplify the equation as follows.
$${1\over2}|p|^2+p^t Jq-q_1^2+{1\over2}q_2^2 -{1\over |q|}+c=0 \iff |s|^2=3q_1^2+{2\over|q|}-2c.$$
This induces the following equivalent condition
$$|s|^2 < 3q_1^2+{2\over|q|}-2c_0 \iff q \in  H_{c, -Jq+s}^{-1}(0) \textrm{ for some } c>c_0$$
for being a point of trajectory. With this substitution, we define the vector $w(q)=v(q)-Jq$ and we have that
$$v(q)+p=v(q)-Jq+s=\begin{pmatrix} {q_2 \over |q|^3} \\ 3q_1-{q_1 \over |q|^3} \end{pmatrix}+s=:w(q)+s,$$
$$(v(q)+p)^t \mathcal{H}(q) (v(q)+p)=(w(q)+s)^t \mathcal{H} (w(q)+s)$$
where $w(q)=\begin{pmatrix} {q_2 \over |q|^3} \\ 3q_1-{q_1 \over |q|^3} \end{pmatrix}$. Theorem \ref{Thm 4.2} has the following stronger statement. Here 'stronger' means that $|s|^2 < 3q_1^2+{2\over|q|}-2c_0$ is replaced by $|s|^2 \le 3q_1^2+{2\over|q|}-2c_0$ and it will be helpful for our argument. 

\begin{Thm} 
\label{Thm 4.3}
We define $w(q)=\begin{pmatrix} {q_2 \over |q|^3} \\ 3q_1-{q_1 \over |q|^3} \end{pmatrix}$, $\mathcal{H}(q)={1 \over |q|^5} \begin{pmatrix} -2|q|^5+|q|^2-3q_1^2 & -3q_1 q_2 \\ -3q_1 q_2 & |q|^5+|q|^2-3q_2^2 \end{pmatrix}$ for $q \in \mathfrak{R}=\{(q_1, q_2) \in \mathbb{R}^2 | {1\over \sqrt{q_1^2+q_2^2}}+{3 \over 2}q_1^2>c_0, |q_1|<3^{-1 \over 3}, |q_2|<2 \cdot 3^{-4 \over 3}\}$. Then the inequality $(w(q)+s)^t \mathcal{H}(q)(w(q)+s)>0$ holds for all $q \in \mathfrak{R}$ and $|s|^2 \le 3q_1^2+{2\over|q|}-2c_0$. 
\end{Thm}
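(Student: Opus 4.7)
My proposal is to view $Q(s):=(w(q)+s)^{t}\mathcal{H}(q)(w(q)+s)$, for each fixed $q\in\mathfrak{R}$, as an indefinite quadratic function of $s\in\mathbb{R}^{2}$ whose only critical point, $s=-w(q)$, is a saddle with $Q(-w)=0$ (since $\mathcal{H}(q)$ has eigenvalues of opposite sign and is therefore invertible). Thus $Q$ admits no interior local minimum on the closed disk $D_q:=\{s:|s|^{2}\le r(q)^{2}\}$ with $r(q)^{2}=3q_{1}^{2}+2/|q|-2c_{0}$, so the minimum must be attained on the boundary circle. A short topological observation then shows that $\min_{|s|=r(q)}Q>0$ is actually equivalent to $Q>0$ throughout $D_q$: the zero set of $Q$ is a pair of lines through $-w(q)$, and if either line intersected $D_q$ it would necessarily exit through $\partial D_q$, producing a zero there; once the zero lines miss $D_q$, the connectedness of $D_q$ together with positivity on $\partial D_q$ forces $Q>0$ on the whole disk. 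By Remark~\ref{Rem 3.1} I may restrict the $q$-range to the first quadrant throughout.

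The task thus reduces to minimizing $Q$ on the boundary circle $|s|=r(q)$ and showing the minimum is strictly positive. I plan to parameterize $s=r(q)(\cos\phi,\sin\phi)$, apply Lagrange multipliers to obtain the critical equation $\mathcal{H}(q)(w(q)+s)=\mu s$, and solve for $s$ in terms of $q$ and the multiplier $\mu\in\mathbb{R}$; substituting back yields, at a critical point, $Q=\mu(r(q)^{2}+w(q)\cdot s)$, and the constraint $|s|=r(q)$ determines $\mu$ up to finitely many choices. The goal is an expression for $\min_{|s|=r(q)}Q$ that depends only on $q\in\mathfrak{R}$. Using the bound $|q|<\alpha<0.54$ coming from $q\in\mathfrak{R}$ (as in the Warm-up Lemma~\ref{Lemma 4.1}), together with polar substitutions in $q$, I aim to reduce this to a one-variable inequality that can be certified positive by the computer-plot-plus-interval-arithmetic method of that lemma; in lieu of solving the Lagrange system exactly, crude but effective estimates such as Cauchy--Schwarz on the linear piece $2(\mathcal{H}w)^{t}s$ and the eigenvalue bound $\lambda_{\min}(\mathcal{H})\,r(q)^{2}$ on the quadratic piece $s^{t}\mathcal{H}s$ can often replace it.

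The principal obstacle is the simultaneous degeneration at the collinear equilibria $q_{\ast}=(\pm 3^{-1/3},0)$ of the effective potential, where $w(q_{\ast})=0$, $r(q_{\ast})=0$, and $\mathcal{H}(q_{\ast})$ has the rank structure inherited from the Hessian of $U$ at its critical point; any bound of the form $w^{t}\mathcal{H}w - C\,r^{2}$ then vanishes to leading order and gives no sign information. To resolve this I will blow up coordinates near $q_{\ast}$ by writing $q_{1}=3^{-1/3}+\varepsilon x$, $q_{2}=\varepsilon y$, $s=\varepsilon\tilde{s}$, expanding $\mathcal{H}$, $w$, $r^{2}$, and hence $Q$ to leading order in the small parameter $\varepsilon$, and verifying positivity as a uniform polynomial inequality in $(x,y,\tilde{s})$ on the rescaled admissible region $|\tilde{s}|^{2}\le 9x^{2}-3y^{2}+O(\varepsilon)$ that one obtains from the leading-order expansion of $r(q)^{2}$. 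The complement of a small neighborhood of $q_{\ast}$ (and of $-q_{\ast}$ by symmetry) is compact in $\overline{\mathfrak{R}}$ and bounded away from the degenerate locus, so the global estimates of the previous paragraph yield strict positivity there; patching the local blow-up analysis with this global bound completes the proof of Theorem~\ref{Thm 4.3}.
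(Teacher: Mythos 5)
Your overall architecture---locate the minimum on the boundary circle, use crude
Cauchy--Schwarz/eigenvalue estimates in the bulk, and blow up near the
degenerate collinear points---is the right shape and mirrors the paper's
three-step division.  But there is a concrete gap in the blow-up argument, and
one misstatement worth correcting.

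\textbf{The misstatement.}  You write ``Using the bound $|q|<\alpha<0.54$ coming
from $q\in\mathfrak{R}$.''  Membership in $\mathfrak{R}$ alone only gives
$|q|<3^{-1/3}\approx 0.693$.  The bound $|q|<0.54$ in the Warm-up Lemma
\ref{Lemma 4.1} is a consequence of the additional hypothesis $p=0$, not of
$q\in\mathfrak{R}$.  You do implicitly acknowledge this later when you isolate
the degeneration at $q_\ast=(\pm 3^{-1/3},0)$, but the crude bounds you
advertise are only available on a proper sub-region of Hill's region, exactly as
in the paper's \hyperlink{step 1}{Step 1}.

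\textbf{The genuine gap.}  Your blow-up plan is to expand $Q$, $w$, $\mathcal H$
and $r^2$ to leading order in $\varepsilon$ and to certify a
\emph{strict} polynomial inequality in $(x,y,\tilde s)$ on the rescaled
admissible region $|\tilde s|^2\le 9x^2-3y^2$.  That strict inequality is
\emph{false}: in the paper's coordinates $(r,k,\alpha)$, the quotient
$d(3^{-1/3},k,\alpha)$, which is precisely your leading-order polynomial,
collapses to the perfect square
$$
d(3^{-1/3},k,\alpha)=36\bigl[(3\sqrt{1-k}\sin\alpha-1)^2
+(\sqrt{2k}-\sqrt{6(1-k)}\cos\alpha)^2\bigr],
$$
which vanishes at $k=2/3$, $\sin\alpha=1/\sqrt3$, $\cos\alpha=\sqrt{2/3}$.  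So
the leading term of $Q/\varepsilon^2$ is merely nonnegative, with an interior
zero in the rescaled admissible region, and a uniform lower bound of the form
``positive constant'' simply does not exist.  To conclude positivity for small
$\varepsilon>0$ you would have to expand to the next order of $\varepsilon$ at
precisely that degenerate direction and show its sign there, which is
nontrivial.  The paper sidesteps this by extracting the \emph{exact} factor
$(3^{-1/3}-|q|)^2$ (not a Taylor approximation) using the global coordinate
change $\cos^2\theta=\frac{1+3k(3^{1/3}r-1)}{1+k(3r^3-1)}$, and then proving
$\partial d/\partial r<0$, so that $d(r,k,\alpha)>d(3^{-1/3},k,\alpha)\ge 0$ for
every admissible $r<3^{-1/3}$.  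That radial monotonicity is the step that turns
a borderline nonnegativity into the needed strict positivity, and it is missing
from your proposal.

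\textbf{Minor remarks.}  Your topological observation that positivity on
$\partial D_q$ propagates to all of $D_q$ is correct (the zero locus of the
indefinite quadratic is two lines through $-w(q)$, which cannot meet the open
disk without crossing the boundary), but it is also redundant: since $Q$ attains
its minimum on the compact disk and there is no interior local minimum, that
minimum is on the boundary, period.  Your Lagrange-multiplier route to the
boundary minimum is a plausible substitute for the paper's Lemmas
\ref{Lem 4.7}--\ref{Lem 4.9} (which pin down the unique boundary minimum and
exploit convexity of the restriction), but as stated it is a plan rather than an
argument, and the computations it produces are precisely the ones that force the
paper into the tangent-line and blow-up machinery.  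So: same overall route, but
without repairing the blow-up step your proof does not close.
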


It is suffices to prove Theorem \ref{Thm 4.3} for the proof of our main Theorem. This is nothing but an inequality problem with constraints and restrictions of parameters. At this moment we have 4 dimensional parameters. We will explain the strategy to reduce the dimension in the next section.

\subsection{Strategy for the proof of Theorem \ref{Thm 4.3}}
\label{strategy}

In this section, we discuss the strategy for the proof of Theorem \ref{Thm 4.3}. First, we divide Theorem \ref{Thm 4.3} into the following three steps.
\begin{itemize}
\item \hypertarget{step 1}{$\bold{Step \ \ 1}$} : $(w(q)+s)^t \mathcal{H}(q)(w(q)+s)>0$ for all $q \in \mathfrak{R} \cap B_{0.54}(0)$ and $|s|^2 \le 3q_1^2+{2\over|q|}-2c_0$.

\item \hypertarget{step 2}{$\bold{Step \ \ 2}$} : $(w(q)+s)^t \mathcal{H}(q)(w(q)+s)>0$ for all $q \in \mathfrak{R} \cap (B_{0.63}(0) \backslash B_{0.54}(0))$ and $|s|^2 \le 3q_1^2+{2\over|q|}-2c_0$.

\item \hypertarget{step 3}{$\bold{Step \ \ 3}$} : $(w(q)+s)^t \mathcal{H}(q)(w(q)+s)>0$ for all $q \in \mathfrak{R} \backslash B_{0.63}(0)$ and $|s|^2 \le 3q_1^2+{2\over|q|}-2c_0$.
\end{itemize}
Here, $B_{r}(0)$ is the open disk with center the origin and radius $r$. This division of steps is visualized in Figure \ref{fig 3}. Note that the radii $0.54, 0.63$ are taken only for computational convenience. Obviously, these three steps imply Theorem \ref{Thm 4.3}. \hyperlink{step 1}{$\bold{Step \ \ 1}$} can be proven directly by using simple estimates. In fact, this can be proven by the following simple argument.

\begin{figure}
\centering
\includegraphics[]{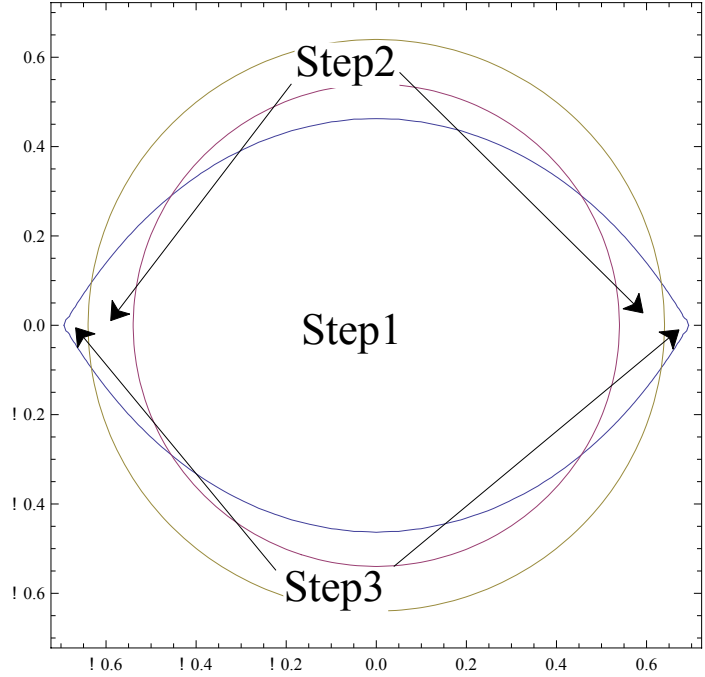}
\caption{Partition of $\mathfrak{R}$ by radius}
\label{fig 3}
\end{figure}

\begin{prop}
\label{prop 4.4}
We have the following estimate.
\begin{eqnarray*}
& &(w(q)+s)^t \mathcal{H}(q) (w(q)+s)
\\ &\ge& {11 \over 12}{1\over r^7}-{10\over3}{1\over r^4}-{29 \over 6}{1\over r}+4r^2-{3\over4}r^5-2({1 \over r^5}-{2 \over r^2})\sqrt{3r^2+{2 \over r}-3^{4\over3}}
\\ & & -(2+{2 \over r^3})(3r^2+{2 \over r}-3^{4\over3})
\end{eqnarray*}
for every  $q \in \mathfrak{R} \cap B_{0.54}(0)$ where $r=|q|$. Moreover, the inequality
$${11 \over 12}{1\over r^7}-{10\over3}{1\over r^4}-{29 \over 6}{1\over r}+4r^2-{3\over4}r^5-2({1 \over r^5}-{2 \over r^2})\sqrt{3r^2+{2 \over r}-3^{4\over3}}-(2+{2 \over r^3})(3r^2+{2 \over r}-3^{4\over3})>0$$
holds for every $r \in (0, 0.54)$.
\end{prop}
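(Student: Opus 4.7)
The plan is to split the quadratic form
\[
(w(q)+s)^t\mathcal{H}(q)(w(q)+s) \;=\; w^t\mathcal{H} w \,+\, 2\,w^t\mathcal{H} s \,+\, s^t\mathcal{H} s,
\]
bound each of the three summands from below by a function of $r := |q|$ alone, and then check the resulting univariate inequality exactly as in the Warm-up Lemma~\ref{Lemma 4.1}. The key algebraic observation is the decomposition
\[
\mathcal{H}(q) \;=\; \begin{pmatrix}-2 & 0 \\ 0 & 1\end{pmatrix} + \frac{1}{r^3}\,I - \frac{3}{r^5}\,qq^t,
\]
which controls the extremal eigenvalues of $\mathcal{H}(q)$ uniformly. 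Throughout I would use $|s|^2 \le 3q_1^2 + 2/r - 2c_0 \le 3r^2 + 2/r - 3^{4/3}$, which follows from $q_1^2\le r^2$, to eliminate $s$.

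For the pure $s$-piece I would prove the matrix inequality $\mathcal{H}(q) + (2+2/r^3)I \succeq 0$: substituting the decomposition above, the left-hand side equals $\operatorname{diag}(0,3) + (3/r^3)I - (3/r^5)qq^t$, which is manifestly positive semidefinite once one uses $qq^t \preceq r^2 I$. This yields $s^t\mathcal{H}(q)s \ge -(2+2/r^3)|s|^2 \ge -(2+2/r^3)(3r^2+2/r-3^{4/3})$, which is the last summand of the claimed bound. For the cross term I would apply Cauchy--Schwarz, $|w^t\mathcal{H} s|\le |\mathcal{H} w|\,|s|$, and then establish $|\mathcal{H}(q)w(q)| \le 1/r^5 - 2/r^2$ on $B_{0.54}(0)\cap \mathfrak{R}$. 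The explicit expression
\[
\mathcal{H}(q)w(q) = \begin{pmatrix} q_2\bigl(r^{-6}-2r^{-3}-9 q_1^2 r^{-5}\bigr) \\ q_1\bigl(3+2r^{-3}-r^{-6}-9 q_2^2 r^{-5}\bigr)\end{pmatrix}
\]
reduces this (for fixed $r$) to a one-parameter maximization in $x = q_1^2 \in [0,r^2]$; the maximum is attained at $q_1 = 0$, where $|\mathcal{H} w|$ equals $1/r^5 - 2/r^2$ exactly, and an elementary sign check in $x$ settles the intermediate cases. This contributes the middle summand $-2(1/r^5 - 2/r^2)\sqrt{3r^2 + 2/r - 3^{4/3}}$.

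For the pure $w$-piece I would expand explicitly; after the $q_1^2 q_2^2$ contributions in degrees $r^{-8}$ and $r^{-11}$ cancel cleanly, one obtains
\[
w^t\mathcal{H} w = \frac{1}{r^7} - \frac{2}{r^4} - \frac{3q_1^2}{r^6} + \frac{3q_1^2}{r^3} + 9q_1^2 - \frac{27 q_1^2 q_2^2}{r^5},
\]
which I would bound below using $q_1^2 \le r^2$, $q_1^2 q_2^2 \le r^4/4$ together with the envelope constraints $|q_1|<3^{-1/3}$ and $|q_2|<2\cdot 3^{-4/3}$ coming from the explicit description of $\mathfrak{R}$. A careful distribution of the nonnegative terms $9q_1^2$ and $3q_1^2/r^3$ to partially offset the negative ones produces the polynomial $\frac{11}{12r^7} - \frac{10}{3r^4} - \frac{29}{6r} + 4r^2 - \frac{3}{4}r^5$. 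Summing the three bounds gives the stated inequality, and positivity of the resulting univariate expression on $(0,0.54)$ will then be checked by a computer plot along the lines of Figure~\ref{fig 2}, with rigorous verification deferred to Appendix~\ref{pf of fig}. I expect the main obstacle to be precisely this last step: the naive estimate $w^t\mathcal{H} w \ge r^{-7} - 5r^{-4} - 27/(4r)$ obtained by simply discarding $9q_1^2$ and $3q_1^2/r^3$ is not strong enough to survive the cross-term subtraction (which blows up like $r^{-11/2}$ as $r\to 0^+$), so retaining partial information about $q_1^2$ and balancing it against the negative terms to hit the coefficients $11/12$, $10/3$, $29/6$, $4$, $3/4$ is what carries the proof.
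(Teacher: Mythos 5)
Your proposal mirrors the paper's proof step by step: the same three-term split, Cauchy--Schwarz on the cross term, and the same eigenvalue bound $\lambda_{-}(\mathcal{H}) \ge -(2+2/r^3)$. Your matrix argument for the $s$-piece is a genuine improvement over the paper's eigenvalue computation: writing $\mathcal{H}(q) + (2+2/r^3)I = \operatorname{diag}(0,3) + (3/r^3)(I - r^{-2}qq^t)$, both summands are manifestly positive semidefinite (the second is $3/r^3$ times the projector orthogonal to $q$), so the bound is immediate.

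There is, however, a genuine gap in the cross-term step, one which the paper shares. You assert that $|\mathcal{H}(q)w(q)|$, as a function of $q_1^2 \in [0, r^2]$ at fixed $r$, is maximized at $q_1 = 0$, where it equals $1/r^5 - 2/r^2$. This is false. Setting $y := q_1^2/r^2$, a direct expansion gives
\[
|\mathcal{H}w|^2 = \Bigl(\tfrac{1}{r^5}-\tfrac{2}{r^2}\Bigr)^2 + \Bigl(\tfrac{75}{r^4}-\tfrac{42}{r}+9r^2\Bigr)y - \Bigl(\tfrac{81}{r^4}-\tfrac{54}{r}\Bigr)y^2,
\]
a concave-down parabola whose slope at $y=0$ is $75/r^4-42/r+9r^2$. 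This is strictly positive for all $r>0$, since $75u^2-42u+9$ (with $u=1/r^3$) has negative discriminant $42^2-4\cdot75\cdot9=-936$. So your ``elementary sign check in $x$'' would actually show the function \emph{increasing} away from $q_1=0$, with a strict interior maximum at $y^{\ast}=(75-42r^3+9r^6)/(162-108r^3)\in(0,1)$ for every $r\in(0,0.54)$; at $r=0.5$, for instance, the true maximum is about $29.0$ versus $1/r^5-2/r^2=24$. The paper makes the same error: the interior critical point it states, $\cos^2\theta=(-3r^6-4r^3+2)/(18r^3-27)$, is actually $\cos 2\theta$ at the critical point (a dropped factor of two when solving for $\cos^2\theta$), and the ensuing assertion that case $1')$ dominates case $3')$ fails. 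A repaired argument would either carry along the genuine maximum $\sqrt{|\mathcal{H}w|^2(y^{\ast})}$ and re-verify positivity of the modified univariate expression, or, better, avoid bounding $w^t\mathcal{H}w$ and $|\mathcal{H}w|$ at independent worst cases, since those worst cases occur at different values of $\cos^2\theta$. Finally, your $w^t\mathcal{H}w$ estimate is only a sketch: to reach the exact coefficients $\tfrac{11}{12},\tfrac{10}{3},\tfrac{29}{6},4,\tfrac34$ the paper minimizes a quadratic in $\cos^2\theta$ and orders the three candidate values via the perfect-square identity $9r^{12}+60r^9+94r^6-20r^3+1=(3r^6+10r^3-1)^2$; ``careful distribution'' of $9q_1^2$ and $3q_1^2/r^3$ does not obviously lead there.
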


However, it is hard to obtain a strict inequality like in Proposition \ref{prop 4.4} for \hyperlink{step 2}{$\bold{Step \ \ 2}$} and \hyperlink{step 3}{$\bold{Step \ \ 3}$} because of the behaviors of $w(q)$, $\sqrt{3q_1^2+{2 \over |q|}-2c_0}$ and $\mathcal{H}(q)$ near the critical point. One can see that $w(q)$ and $\sqrt{3q_1^2+{2 \over |q|}-2c_0}$ go to zero and $\mathcal{H}(q)$ goes to $\begin{pmatrix}-8 & 0\\ 0 & 4 \end{pmatrix}$ as $q$ goes to the critical point $(3^{-1 \over 3}, 0)$. Thus we have to consider not only the convergent speeds of $w(q)$ and $\sqrt{3q_1^2+{2 \over |q|}-2c_0}$ but also the direction of $w(q)$ to see that the tangential Hessians $(w(q)+s)^t \mathcal{H}(q) (w(q)+s)$ are positive for all $|s|<\sqrt{3q_1^2+{2 \over |q|}-2c_0}$. We interpret Theorem \ref{Thm 4.3}. as a minimum value problem for the parameter $s$. Namely, we have the following equivalent form.
$$(w(q)+s)^t \mathcal{H}(q)(w(q)+s)>0 \textrm{ for all } q \in \mathfrak{R} \textrm{ and } |s|^2 \le 3q_1^2+{2\over|q|}-2c_0$$
$$\iff \min_{|s|^2 \le 3q_1^2+{2 \over |q|}-3^{4/3}} (w(q)+s)^t \mathcal{H}(q) (w(q)+s)>0 \textrm{ for all } q \in \mathfrak{R}.$$
We can concentrate only on the first quadrant of $\mathfrak{R}$ including axes by the symmetry argument in Remark \ref{Rem 3.1}. We define  $\mathfrak{R}^{+}:=\{(q_1, q_2) \in \mathbb{R}^2 | {1\over \sqrt{q_1^2+q_2^2}}+{3 \over 2}q_1^2>c_0, |q_1|<3^{-1 \over 3}, |q_2|<2 \cdot 3^{-4 \over 3}, q_1 \ge 0, q_2  \ge 0\}$ the first quadrant, including $q_1, q_2$-axis, of $\mathfrak{R}$ . Moreover, we can reduce the domain of $s$ which needs to be examined by one dimension by proving the following Proposition. We will use Proposition \ref{prop 4.5} for the proofs of \hyperlink{step 2}{$\bold{Step \ \ 2}$} and \hyperlink{step 3}{$\bold{Step \ \ 3}$}. 
\begin{prop}
\label{prop 4.5}
The following equality holds for every $q \in \mathfrak{R}^{+} \backslash B_{0.54}(0)$.
$$\min_{|s|^2 \le 3q_1^2+{2 \over |q|}-3^{4/3}} (w(q)+s)^t \mathcal{H}(q) (w(q)+s)=\min_{\alpha \in [\theta, \theta+{\pi \over 2}]} (w(q)+s_{q, \alpha})^t \mathcal{H}(q) (w(q)+s_{q, \alpha})$$
where $s_{q, \alpha}=\sqrt{3q_1^2+{2 \over |q|}-3^{4/3}}\begin{pmatrix} \cos \alpha \\ \sin \alpha \end{pmatrix}$ is a point of $\partial B_{\sqrt{3q_1^2+{2 \over |q|}-3^{4 \over 3}}}(0)$ and $\theta$ is the angle of $q$ in polar coordinates.
\end{prop}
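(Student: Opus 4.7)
My plan is to prove Proposition 4.5 in two reduction steps: first from the closed disk $\{|s|\le R\}$ (with $R^2 := 3q_1^2 + 2/|q| - 3^{4/3}$) to its boundary circle, and then from that boundary circle to the quarter arc $\alpha \in [\theta, \theta + \pi/2]$. The first step is essentially automatic from the shape of $\mathcal{H}(q)$; the second is the real content of the statement.

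For the disk-to-circle step, I would verify that $\mathcal{H}(q)$ is indefinite on $\mathfrak{R}^{+} \setminus B_{0.54}(0)$. A direct expansion yields
\[
\det \mathcal{H}(q) = -2 - \frac{1}{|q|^3} + \frac{3(2q_2^2 - q_1^2)}{|q|^5} - \frac{2}{|q|^6}.
\]
Combining $|q| \le 3^{-1/3}$ (which follows from $q \in \mathfrak{R}$) with the Jacobi constraint $\frac{1}{|q|} + \frac{3}{2} q_1^2 > c_0$ gives $3q_1^2 > 3^{4/3} - 2/|q| \ge 2|q|^2$, so $2q_2^2 - q_1^2 = 2|q|^2 - 3q_1^2 < 0$ and $\det \mathcal{H}(q) < 0$. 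Hence $\mathcal{H}(q)$ is indefinite, the quadratic $f(s) := (w(q)+s)^{t} \mathcal{H}(q)(w(q)+s)$ has a unique critical point---a saddle at $s = -w(q)$ with value $0$---and since a saddle cannot be a local minimum, the minimum over any closed disk coincides with the minimum over its bounding circle.

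For the circle-to-arc step, I would pass to the rotated frame in which $q$ aligns with the first axis. A direct computation with $r := |q|$ gives
\[
R_\theta^{t} w = \begin{pmatrix} 3r\sin\theta\cos\theta \\ 3r\cos^2\theta - 1/r^2 \end{pmatrix}, \qquad R_\theta^{t} \mathcal{H} R_\theta = \begin{pmatrix} 1 - 3\cos^2\theta - 2/r^3 & 3\sin\theta\cos\theta \\ 3\sin\theta\cos\theta & 3\cos^2\theta - 2 + 1/r^3 \end{pmatrix}.
\]
Writing $s = R(\cos(\theta + \beta), \sin(\theta + \beta))^{t}$, the claim becomes that $\tilde f(\beta) := (R_\theta^{t} w + R(\cos\beta, \sin\beta)^{t})^{t} (R_\theta^{t} \mathcal{H} R_\theta)(R_\theta^{t} w + R(\cos\beta, \sin\beta)^{t})$ attains its minimum over $\beta \in [0, 2\pi)$ at some $\beta \in [0, \pi/2]$. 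The Lagrange condition $\mathcal{H}(w+s) = \mu s$ reduces $\tilde f'(\beta) = 0$ to a trigonometric polynomial of degree two in $\sin\beta, \cos\beta$, hence has at most four solutions. I would pin them down by inspecting the signs of $\tilde f'$ at $\beta = 0, \pi/2, \pi, 3\pi/2$, exploiting that $(R_\theta^{t} w)_1 \ge 0$ in the first quadrant and that $(R_\theta^{t} w)_2 = 3r\cos^2\theta - 1/r^2$ has a sign determined by whether $r$ lies above or below $3^{-1/3}$.

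The main obstacle is this second reduction in the limit $q \to (3^{-1/3}, 0)$: there $R_\theta^{t} w \to 0$ and $R \to 0$ simultaneously, while $R_\theta^{t} \mathcal{H} R_\theta \to \mathrm{diag}(-8, 4)$, so the critical-point equation degenerates and the sign pattern of $\tilde f'$ at the four endpoints cannot be read off from leading terms. The resolution, as the paper signals in Section 4.1, is to employ blow-up coordinates near $(3^{-1/3}, 0)$ and track the leading-order asymptotics of $R_\theta^{t} w / R$; showing that this ratio concentrates inside the first quadrant of the rotated $\tilde s$-plane as $q$ approaches the critical point is precisely what pins the minimizing $\beta$ inside $[0, \pi/2]$.
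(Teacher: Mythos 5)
Your disk-to-circle reduction is sound and matches the paper's Lemma 4.6: you correctly compute $\det\mathcal{H}(q)$, show it is negative on the relevant region (the estimate $3^{4/3}-2/|q|\ge 2|q|^2$ is valid once $|q|\ge 0.54$, which is the stated domain, though you should say so), and conclude that the quadratic $F_q(s)$ has a saddle, not a minimum, at its only interior critical point.

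The circle-to-arc step, however, has a genuine gap, and it is precisely the gap the paper's Lemmas 4.7 and 5.1 are designed to close. You observe that $\tilde f'(\beta)$ is a degree-two trigonometric polynomial and hence has at most four zeros on $S^1$, and you propose to "pin them down" by reading off the sign of $\tilde f'$ at $\beta=0,\pi/2,\pi,3\pi/2$. But four endpoint signs do not determine the global minimizer of a function whose derivative may vanish four times: a sign pattern such as $(-,+,+,-)$ is consistent with a min in $(0,\pi/2)$, a max followed by a second min inside $(\pi/2,\pi)$, and a max in $(\pi,3\pi/2)$, and nothing in the endpoint data tells you which of the two minima is global. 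What is actually needed is that $\tilde f\vert_{S^1}$ has a \emph{unique} local minimum. The paper proves this (Lemma 4.7) by writing $\tilde f'(\alpha)=A\sin(2\alpha+\phi)+B\sin(\alpha+\mu)$, establishing the quantitative estimates $|B_1|\ge 2|A_2|$ and $|B_2|>2|A_1|$ (so $2|A|<|B|$) from the constraints $r\in(0.54,3^{-1/3})$ and $2c-2c_0<0.3$, and invoking the parameter-continuation Lemma 5.1 to show there are \emph{exactly two} zeros. Only then do the endpoint sign computations (Lemma 4.8) locate the minimizer in $[0,\pi/2]$. Your proposal omits the coefficient estimate entirely, so the count "exactly two" never appears.

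You also point to the wrong tool for handling the degeneration $q\to(3^{-1/3},0)$: the blow-up coordinates of Section 4 are introduced for Step 3 of Theorem 4.3, not for Proposition 4.5, whose proof in the paper is uniform in $q$ and never invokes a blow-up. Near the critical point the quantity $2c-2c_0$ tends to $0$, which only \emph{strengthens} the estimate $2|A|<|B|$ (since $|A|$ scales like $2c-2c_0$ and $|B|$ like $\sqrt{2c-2c_0}$), so that limit is benign for this lemma; the obstacle you describe is real, but it belongs to a later stage of the argument. Finally, a small inaccuracy: since $|q|<3^{-1/3}$ throughout $\mathfrak{R}$, the quantity $(R_\theta^t w)_2=3r\cos^2\theta-1/r^2$ is always negative, so its sign is not "determined by whether $r$ lies above or below $3^{-1/3}$."
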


Proposition \ref{prop 4.5} means that the minimum occurs at a specific part of the boundary of the disk. The proof of Proposition \ref{prop 4.5} consists of Lemma \ref{Lem 4.6}, \ref{Lem 4.7} and \ref{Lem 4.8}. To establish these Lemmas, we define 
$$F_q: D_q \rightarrow \mathbb{R} , \quad F_q(s)=(w(q)+s)^t \mathcal{H}(q) (w(q)+s)$$
for a fixed $q$. Here, $D_q:=\overline{B_{\sqrt{3q_1^2+{2 \over |q|}-3^{4 \over 3}}}(0)}$ denotes the closed disk of radius $\sqrt{3q_1^2+{2 \over |q|}-3^{4 \over 3}}$ with center at the origin.

\begin{Lem}
\label{Lem 4.6}
The function $F_q : D_q \rightarrow \mathbb{R}$ has no local minimum in $int(D_q)$ for all $q \in \mathfrak{R}^{+} \backslash B_{0.54}(0)$.
\end{Lem}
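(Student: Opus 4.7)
The plan is to exploit the fact that $F_q$ is, for fixed $q$, a quadratic polynomial in the variable $s \in \mathbb{R}^2$, so the existence of an interior local minimum is controlled entirely by the constant (in $s$) Hessian matrix of $F_q$. Expanding
\[
F_q(s) = w(q)^t \mathcal{H}(q) w(q) + 2\, w(q)^t \mathcal{H}(q) s + s^t \mathcal{H}(q) s,
\]
one reads off $\nabla_s F_q(s) = 2\mathcal{H}(q)\bigl(w(q) + s\bigr)$ and $\mathrm{Hess}_s F_q(s) = 2\mathcal{H}(q)$. A quadratic function on $\mathbb{R}^2$ can attain a local minimum at an interior point only if its Hessian is positive semi-definite; hence it will suffice to show that $\mathcal{H}(q)$ is \emph{not} positive semi-definite for the relevant $q$.

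I would then carry out the trace computation for $\mathcal{H}(q)$ directly from the formula in Theorem \ref{Thm 4.3}:
\[
\mathrm{tr}\,\mathcal{H}(q) = \frac{1}{|q|^5}\Bigl[(-2|q|^5 + |q|^2 - 3q_1^2) + (|q|^5 + |q|^2 - 3q_2^2)\Bigr]
= \frac{-|q|^5 - |q|^2}{|q|^5} = -1 - \frac{1}{|q|^3}.
\]
Since this is strictly negative for every $q \neq 0$, at least one eigenvalue of $\mathcal{H}(q)$ must be strictly negative, so $\mathcal{H}(q)$ fails to be positive semi-definite. Consequently $F_q$ has a strictly decreasing direction from every point of $\mathbb{R}^2$ and admits no local minimum — in particular, none in $\mathrm{int}(D_q)$.

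There is essentially no obstacle here: the argument is a one-line consequence of the quadratic structure and the sign of the trace, and it actually works for every $q \in \mathfrak{R} \setminus \{0\}$, so the restriction $q \in \mathfrak{R}^{+} \setminus B_{0.54}(0)$ is not used. It is worth flagging, for the subsequent Lemma \ref{Lem 4.7} and Lemma \ref{Lem 4.8}, that the same trace computation together with the (easy) observation that $\det \mathcal{H}(q) = -2 - \tfrac{1}{|q|^3} + \tfrac{3(2q_2^2 - q_1^2)}{|q|^5} - \tfrac{2}{|q|^6}$ pins down when $\mathcal{H}(q)$ is indefinite versus negative definite; the unique critical point $s = -w(q)$ of $F_q$ is then either a saddle (so the minimum on $D_q$ is attained on $\partial D_q$) or the global maximum (same conclusion), which is exactly the reduction that Proposition \ref{prop 4.5} will subsequently exploit.
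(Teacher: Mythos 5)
Your proof is correct, and the core idea — $F_q$ is a quadratic in $s$ with constant Hessian $2\mathcal{H}(q)$, and that matrix has a negative eigenvalue, so no interior local minimum — is the same as the paper's. The only difference is how the negative eigenvalue is detected: the paper cites the determinant computation it already did in the proof of Proposition 4.4 (where $\det \mathcal{H}(q) < 0$ shows $\mathcal{H}$ is indefinite with one positive and one negative eigenvalue), whereas you observe that $\operatorname{tr}\mathcal{H}(q) = -1 - 1/|q|^3 < 0$ directly. Your route is a bit more self-contained and, as you correctly note, works for every $q \ne 0$ without using the constraint $q \in \mathfrak{R}^{+}\setminus B_{0.54}(0)$; the paper's determinant route gives the slightly stronger conclusion that $\mathcal{H}(q)$ is indefinite (hence no interior local maximum either), which is not needed for Lemma~\ref{Lem 4.6} but is the picture it invokes. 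One small caveat in your last paragraph: the claim that $s=-w(q)$ is "the unique critical point" of $F_q$ tacitly uses invertibility of $\mathcal{H}(q)$, which does come from $\det\mathcal{H}(q)<0$; this is harmless since you are only flagging it for the later lemmas, but it is worth being explicit that you would still need the determinant sign there.
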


Lemma \ref{Lem 4.6} can be easily shown by examining the Hessian of $F_q$ in the variable $s$. If we prove Lemma \ref{Lem 4.6}, then we only need to see $F_q$ on the boundary of $D_q$. We define $F_q|_{\partial D_q} : S^1 \rightarrow \mathbb{R}$ by restricting $F_q$ to $\partial D_q$, that is, $F_q|_{\partial D_q}(\alpha)=F_q(s_{q, \alpha})=F_q(\sqrt{3q_1^2+{2 \over |q|}-3^{4 \over 3}} \begin{pmatrix} \cos \alpha \\ \sin \alpha \end{pmatrix}$. Here we use an abuse of notation that ignores the reparametrization of angle. With this notation, we introduce the following Lemmas.

\begin{Lem}
\label{Lem 4.7}
There exists a unique local minimum and a unique local maximum of the restricted function $F_q|_{\partial D_q}: S^1 \rightarrow \mathbb{R}$ for each $q \in \mathfrak{R}^{+} \backslash B_{0.54}(0)$.
\end{Lem}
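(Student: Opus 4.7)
The plan is to turn the count of critical points of $F_q|_{\partial D_q}$ into a single scalar inequality via Lagrange multipliers. First I would diagonalize the indefinite Hessian $\mathcal{H}(q) = \lambda_{+}(q)\, e_{+}e_{+}^{t} + \lambda_{-}(q)\, e_{-}e_{-}^{t}$ with $\lambda_{+} > 0 > \lambda_{-}$, and expand $w(q) = a\, e_{+} + b\, e_{-}$, so that in eigencoordinates $s = s_{+}e_{+} + s_{-}e_{-}$ one has $F_q(s) = \lambda_{+}(a + s_{+})^{2} + \lambda_{-}(b + s_{-})^{2}$. A Lagrange-multiplier critical point on the circle $|s| = R := \sqrt{3q_{1}^{2} + 2/|q| - 3^{4/3}}$ solves $\mathcal{H}(w+s) = \mu s$, which in the eigenbasis becomes $s_{+} = \lambda_{+} a/(\mu - \lambda_{+})$ and $s_{-} = \lambda_{-} b/(\mu - \lambda_{-})$.

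Substituting into $s_{+}^{2} + s_{-}^{2} = R^{2}$ reduces the counting problem to the number of real solutions of
\begin{equation*}
g(\mu) := \frac{\lambda_{+}^{2} a^{2}}{(\mu - \lambda_{+})^{2}} + \frac{\lambda_{-}^{2} b^{2}}{(\mu - \lambda_{-})^{2}} = R^{2}.
\end{equation*}
A short calculus argument shows that $g$ blows up at $\lambda_{\pm}$, decays to $0$ at $\pm\infty$, is strictly monotone on each of $(-\infty,\lambda_{-})$ and $(\lambda_{+},\infty)$, and has a unique minimum in $(\lambda_{-},\lambda_{+})$ given by the closed form $g_{\min} = (|\lambda_{+}a|^{2/3} + |\lambda_{-}b|^{2/3})^{3}/(\lambda_{+}-\lambda_{-})^{2}$. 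Hence $g(\mu) = R^{2}$ has exactly two solutions (one on each outer interval) whenever $R^{2} < g_{\min}$, and four solutions when $R^{2} > g_{\min}$. Since distinct $\mu$'s produce distinct $s$, and critical points of a smooth $S^{1}$-function alternate between local minima and maxima, the lemma reduces to the scalar inequality $R(q)^{2} < g_{\min}(q)$.

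It then remains to establish $R(q)^{2}(\lambda_{+}-\lambda_{-})^{2} < (|\lambda_{+}a|^{2/3} + |\lambda_{-}b|^{2/3})^{3}$ on $\mathfrak{R}^{+} \setminus B_{0.54}(0)$. I would first dispatch the $q_{2}=0$ boundary, where $\mathcal{H}(q)$ is diagonal, $w(q)$ points along $e_{-}$, and $a = 0$; here the inequality collapses to a one-variable estimate in $|q|$ that can be checked by the same kind of graphical verification used in Proposition 4.4. For the interior $q_{2} > 0$, I would use the annular geometry imposed by $0.54 \le |q|$ together with $|q_{1}| < 3^{-1/3}$ to extract uniform bounds on $\lambda_{\pm}$, $a$, $b$, and then either reduce to a one-parameter plot or perform a Lipschitz-controlled grid search over the resulting compact region.

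The hardest part will be the corner $q \to (3^{-1/3},0)$: there $R(q)$ and $w(q)$ both vanish while $\lambda_{+}-\lambda_{-}$ stays bounded away from zero, so the inequality degenerates into a delicate comparison of competing vanishing rates. As signalled at the end of Section 4.1, I would pass to blow-up coordinates around that critical point, expand $R$, $a$, $b$, and $\lambda_{\pm}$ to leading order in the blow-up parameter, and verify the inequality survives in the rescaled limit; continuity and Lipschitz bounds would then extend that asymptotic check to a full neighborhood which glues cleanly to the computer-verified bulk estimate.
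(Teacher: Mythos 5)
Your approach is genuinely different from the paper's. The paper writes $\tfrac{d}{d\alpha}\,F_q|_{\partial D_q}$ explicitly as $A_1\sin 2\alpha + A_2\cos 2\alpha + B_1\sin\alpha + B_2\cos\alpha$, proves the two estimates $|B_1|\ge 2|A_2|$ and $|B_2|>2|A_1|$, and invokes a stand-alone trigonometric lemma (Lemma 5.1, proved by a Sard/cobordism ``birth--death'' argument) asserting that $A\sin(2\alpha+\phi)+B\sin(\alpha+\mu)=0$ has exactly two zeros on $S^1$ when $2|A|<|B|$. You instead pass to eigencoordinates of $\mathcal H(q)$ and apply the Lagrange-multiplier condition $\mathcal H(w+s)=\mu s$, which reduces the critical-point count to counting real solutions of $g(\mu)=R^2$ with $g(\mu)=\frac{\lambda_+^2 a^2}{(\mu-\lambda_+)^2}+\frac{\lambda_-^2 b^2}{(\mu-\lambda_-)^2}$. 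Your shape analysis of $g$ is correct, and I have checked your closed form $g_{\min}=(|\lambda_+a|^{2/3}+|\lambda_-b|^{2/3})^3/(\lambda_+-\lambda_-)^2$: the minimizer in $(\lambda_-,\lambda_+)$ satisfies $(\mu-\lambda_-)/(\lambda_+-\mu)=(|\lambda_-b|/|\lambda_+a|)^{2/3}$, which yields that expression. Hence the reduction to $R(q)^2<g_{\min}(q)$ is sound: when it holds there are exactly two Lagrange critical points, necessarily one local minimum and one local maximum on the circle. The paper's $2|A|<|B|$ is only a sufficient condition for two zeros; your $R^2<g_{\min}$ is the sharp (necessary-and-sufficient, in the nondegenerate case) condition, which is more conceptually transparent. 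The price is that your inequality involves the eigenvalues (a square root of a discriminant), the eigen-decomposition of $w$, and fractional $2/3$-powers, so the symbolic verification would be substantially heavier than the paper's polynomial estimates in Claims 1 and 2.

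A few points you should make precise before this is a proof rather than a plan. First, the degenerate cases $a=0$ or $b=0$: there $g$ loses one pole, is monotone rather than U-shaped on $(\lambda_-,\lambda_+)$, and the Lagrange multiplier $\mu=\lambda_\mp$ can carry extra critical points with the transverse component of $s$ free; this still reduces to $R^2<g_{\min}$ with your formula, but the argument needs to be stated separately. (On the $q_2=0$ segment, incidentally, $w$ lies along the \emph{negative} eigendirection of the diagonal $\mathcal H$, so it is $b$, not $a$, that vanishes in your labeling --- minor, but worth fixing.) Second, the fact that $w(q)\ne 0$ on $\mathfrak R^+\setminus B_{0.54}(0)$ --- which you need so that distinct $\mu$'s give distinct $s$'s --- holds because $w=0$ forces $q=(3^{-1/3},0)$, the boundary critical point, but this should be said. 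Third, the corner $q\to(3^{-1/3},0)$ really is a razor-thin margin: along $q_2=0$, expanding in $\epsilon=3^{-1/3}-q_1$ one finds $R^2=9\epsilon^2+2\cdot 3^{4/3}\epsilon^3+O(\epsilon^4)$ and $g_{\min}=9\epsilon^2+6\cdot 3^{4/3}\epsilon^3+O(\epsilon^4)$, so the $\epsilon^2$ terms cancel identically and the inequality survives only at the cubic order. Your proposed blow-up is therefore not optional decoration; it is load-bearing. Finally, the bulk verification (grid search or plot) and the blow-up asymptotics are not carried out here; they are analogous in spirit to the graph-verifications and Appendix computations in the paper, but until they are done this is a correct strategy rather than a completed proof.
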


\begin{Lem}
\label{Lem 4.8}
The unique minimum of $F_q|_{\partial D_q}: S^1 \rightarrow \mathbb{R}$ of Lemma \ref{Lem 4.7} is attained in $[\theta, \theta+{\pi \over 2}]$ where $q_1=r \cos \theta, q_2=r \sin \theta$.
\end{Lem}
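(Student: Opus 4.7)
The strategy is to use Lemma \ref{Lem 4.7} together with sign information about $\frac{d}{d\alpha}F_q|_{\partial D_q}$ at the two endpoints $\alpha=\theta$ and $\alpha=\theta+\pi/2$. By Lemma \ref{Lem 4.7}, $F_q|_{\partial D_q}:S^1\to\mathbb{R}$ has exactly one local minimum $\alpha_m$ and one local maximum $\alpha_M$, so $S^1$ splits into a strictly decreasing arc from $\alpha_M$ counterclockwise to $\alpha_m$ and a strictly increasing arc back to $\alpha_M$. Consequently, as soon as I can verify
\begin{equation*}
\frac{dF_q|_{\partial D_q}}{d\alpha}\bigg|_{\alpha=\theta}\leq 0 \qquad\text{and}\qquad \frac{dF_q|_{\partial D_q}}{d\alpha}\bigg|_{\alpha=\theta+\pi/2}\geq 0,
\end{equation*}
the first condition forces $\theta$ into the closed decreasing arc and the second forces $\theta+\pi/2$ into the closed increasing arc; the unique minimum $\alpha_m$ between them then lies in $[\theta,\theta+\pi/2]$.

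A direct computation using $s_{q,\theta}=(R/r)(q_1,q_2)^t$, $s_{q,\theta+\pi/2}=(R/r)(-q_2,q_1)^t$, and the identities $w^t\mathcal{H}q=3q_1q_2(1-3/r^3)$, $w^t\mathcal{H}(-q_2,q_1)^t=3q_1^2+2/r-1/r^4$, and $q^t\mathcal{H}(-q_2,q_1)^t=(-q_2,q_1)\mathcal{H}q=3q_1q_2$ (all immediate from the explicit form of $\mathcal{H}$ in Section \ref{preparation}) yields
\begin{align*}
\frac{dF_q|_{\partial D_q}}{d\alpha}\bigg|_{\theta} &= \tfrac{2R}{r}\bigl[A+BR\bigr],\qquad A:=3q_1^2+\tfrac{2}{r}-\tfrac{1}{r^4},\ \ B:=\tfrac{3q_1q_2}{r},\\
\frac{dF_q|_{\partial D_q}}{d\alpha}\bigg|_{\theta+\pi/2} &= -\tfrac{6Rq_1q_2}{r}\bigl[1-\tfrac{3}{r^3}+\tfrac{R}{r}\bigr].
\end{align*}
Since $q_1,q_2,R\geq 0$ on $\mathfrak{R}^+$, non-negativity of the derivative at $\theta+\pi/2$ reduces to $R\leq 3/r^2-r$; squaring this and using $q_1\leq r$ reduces it further to the one-variable inequality $2r^2+8/r-9/r^4-3^{4/3}\leq 0$, which is elementary on the relevant range $r\in[0.54,3^{-1/3})$.

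For the derivative at $\theta$, I would first prove $A\leq 0$: from $q_1\leq 3^{-1/3}$, one has $3q_1^2\leq 3^{1/3}$, while from $r\leq 3^{-1/3}$, one has $1/r^4-2/r\geq 3^{1/3}$ (equality exactly at $r=3^{-1/3}$). Given $A\leq 0$ and $BR\geq 0$, the inequality $A+BR\leq 0$ is equivalent to the algebraic inequality $A^2\geq B^2R^2$. Substituting $q_2^2=r^2-q_1^2$ and setting $y=q_1^2$ rewrites $A^2-B^2R^2$ as a cubic polynomial $\phi(y)$ which one must check is non-negative on $y\in[(3^{4/3}-2/r)/3,\,\min(r^2,3^{-2/3})]$; at both endpoints $\phi$ reduces to $A^2\geq 0$ because $R=0$ on the left and $q_2=0$ on the right, so the task is to rule out an interior dip.

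The main obstacle is a neighborhood of the critical point $(3^{-1/3},0)$, where $A$, $B$, $R$, and hence the margin $A^2-B^2R^2$ all vanish simultaneously, so any direct estimate loses its gap. Here I would employ the blow-up coordinates $q_1=3^{-1/3}+\epsilon u$, $q_2=\epsilon v$ announced in Section \ref{preparation and strategy}. Taylor-expanding the effective potential $V=1/|q|+(3/2)q_1^2$ around the saddle, whose Hessian there is $\mathrm{diag}(9,-3)$, gives $R^2=2(V-c_0)=3\epsilon^2(3u^2-v^2)+O(\epsilon^3)$ and $A=12\cdot 3^{2/3}\epsilon u+O(\epsilon^2)$. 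Thus $A^2=O(\epsilon^2)$ dominates $B^2R^2=O(\epsilon^4)$ near the critical point, giving a comfortable quadratic-in-$\epsilon$ margin. Away from this neighborhood the cubic $\phi(y)\geq 0$ can be verified by direct algebraic lower bounds on the narrow admissible interval of $y$; combined with Lemma \ref{Lem 4.7}, this completes the proof of Lemma \ref{Lem 4.8}.
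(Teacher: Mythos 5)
Your overall strategy matches the paper's: use Lemma~\ref{Lem 4.7} to know there is a unique local minimum and maximum, and then show that the derivative of $F_q|_{\partial D_q}$ is non-positive at $\alpha=\theta$ and non-negative at $\alpha=\theta+\pi/2$, which forces the minimum into $[\theta,\theta+\pi/2]$. Your derivative formulas are correct and agree with the paper's after converting $(r,\theta,c)$ to $(q_1,q_2,r,R)$, and your handling of the $\theta+\pi/2$ endpoint via $R\le 3/r^2-r$, squaring, and reducing to the single-variable inequality $2r^2+8/r-9/r^4-3^{4/3}\le 0$ is a clean, correct, and genuinely self-contained substitute for the paper's appeal to Claim~1 of Lemma~\ref{Lem 4.7} (one checks $\psi(r)=2r^2+8/r-9/r^4-3^{4/3}$ is strictly increasing and $\psi(3^{-1/3})<0$). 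Your reduction of the $\theta$ endpoint, via $A\le 0$ and $BR\ge 0$, to $A^2\ge B^2R^2$, is also correct.

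The gap is in the verification of $A^2\ge B^2R^2$. Your blow-up analysis at the corner $(3^{-1/3},0)$ is fine and the scalings you write down are correct, but the claim that ``away from this neighborhood the cubic $\phi(y)\ge 0$ can be verified by direct algebraic lower bounds on the narrow admissible interval of $y$'' is exactly where the substance lies, and it is not a routine estimate: written out, the margin degenerates to third order in $(3^{-1/3}-r)$ on the edge $q_2=0$ as $r\to 3^{-1/3}$, so naive bounds fail without a cancellation argument. The paper avoids this work entirely by noticing that the needed estimate is already available: Claim~2 in the proof of Lemma~\ref{Lem 4.7} gives $\frac{1}{r^5}-\frac{2c}{r}>\sqrt{2c-2c_0}\,\bigl(6\cos^2\theta+\tfrac{3}{r^3}-3\bigr)$, i.e.\ $|A|/r>R\bigl(6\cos^2\theta+\tfrac{3}{r^3}-3\bigr)$, and since $6\cos^2\theta+\tfrac{3}{r^3}-3>3\cos\theta\sin\theta$ holds trivially on the relevant region, one gets $|A|>BR$, hence $A^2>B^2R^2$, with no new computation. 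You should either cite Claim~2 as the paper does, or carry out the cubic verification in full (which in effect reproves a version of Claim~2 and requires the monotonicity-in-$y$ plus boundary-expansion argument).
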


Above three Lemmas will prove Proposition \ref{prop 4.5}. Once we prove Proposition \ref{prop 4.5}, it is enough to show the following inequality 
$$\min_{\alpha \in [0, {\pi \over 2}]} (w(q)+s_{q, \theta+\alpha})^t \mathcal{H}(q) (w(q)+s_{q, \theta+\alpha})>0 \textrm{ for all } q \in \mathfrak{R}^{+} \backslash B_{0.54}(0)$$
in order to prove \hyperlink{step 2}{$\bold{Step \ \ 2}$} and \hyperlink{step 3}{$\bold{Step \ \ 3}$} where $\theta$ is the angle of $q$ in polar coordinate. Thus we define the translated function 
$$f_q(\alpha):=F_q|_{D_q}(\theta+\alpha)=(w(q)+s_{q, \theta+\alpha})^t \mathcal{H}(q) (w(q)+s_{q, \theta+\alpha})$$
of $\alpha$ for each $q$. It suffices to prove that 
$$\min_{0 \le \alpha \le {\pi \over 2}}f_q(\alpha)>0$$
for all $q \in \mathfrak{R}^{+} \backslash B_{0.54}(0)$. Proposition \ref{prop 4.5} will be applied to both of \hyperlink{step 2}{$\bold{Step \ \ 2}$} and \hyperlink{step 3}{$\bold{Step \ \ 3}$}. But we will use different techniques for the proof of \hyperlink{step 2}{$\bold{Step \ \ 2}$} and \hyperlink{step 3}{$\bold{Step \ \ 3}$} from this point. We will use 'supporting tangent line inequality' for the proof of \hyperlink{step 2}{$\bold{Step \ \ 2}$} and 'blowing up the corner' for the proof of \hyperlink{step 3}{$\bold{Step \ \ 3}$}.

We will use a sharp estimate in the proof of \hyperlink{step 2}{$\bold{Step \ \ 2}$}. In general, it is hard to know where the minimum is attained for the problem $\min_{0 \le \alpha \le {\pi \over 2}}f_q(\alpha)$. Thus we need the following geometric observations to give another sufficient condition which allows us to forget $\alpha$.

\begin{Lem}
\label{Lem 4.9}
The function $f_q$ is convex on $[0, {\pi \over 2}]$ for each $q \in \mathfrak{R}^{+} \backslash B_{0.54}(0)$.
\end{Lem}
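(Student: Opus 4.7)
The plan is to verify $f_q''(\alpha)\geq 0$ for every $\alpha\in[0,\pi/2]$ and every $q\in\mathfrak{R}^{+}\setminus B_{0.54}(0)$ by explicit computation of the second derivative. Set $R=\sqrt{3q_1^{2}+2/|q|-3^{4/3}}$ and $u(\alpha)=(\cos(\theta+\alpha),\sin(\theta+\alpha))^{t}$, so that $s_{q,\theta+\alpha}=Ru(\alpha)$ satisfies $\tfrac{d^{2}}{d\alpha^{2}}s_{q,\theta+\alpha}=-s_{q,\theta+\alpha}$. Differentiating $f_q(\alpha)=(w(q)+Ru(\alpha))^{t}\mathcal{H}(q)(w(q)+Ru(\alpha))$ twice and substituting $s''=-s$ gives
\[
f_q''(\alpha)=-2\,s_{q,\theta+\alpha}^{t}\mathcal{H}(q)w(q)+2R^{2}\bigl[(u'(\alpha))^{t}\mathcal{H}(q)u'(\alpha)-u(\alpha)^{t}\mathcal{H}(q)u(\alpha)\bigr].
\]
Because $\{u(\alpha),u'(\alpha)\}$ is an orthonormal basis of $\mathbb{R}^{2}$, the identity $(u')^{t}\mathcal{H}u'+u^{t}\mathcal{H}u=\operatorname{tr}\mathcal{H}(q)$ rewrites the bracket as $\operatorname{tr}\mathcal{H}(q)-2u(\alpha)^{t}\mathcal{H}(q)u(\alpha)$. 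Using $\operatorname{tr}\mathcal{H}(q)=-1-|q|^{-3}$ and expanding the entries of $\mathcal{H}(q)$, this converts $f_q''(\alpha)$ into a trigonometric polynomial in $\alpha$ of degree at most two whose zero-frequency contributions cancel, leaving only a first harmonic of amplitude $2R|\mathcal{H}(q)w(q)|$ and a second harmonic of amplitude $2R^{2}\sqrt{(\mathcal{H}_{11}-\mathcal{H}_{22})^{2}+4\mathcal{H}_{12}^{2}}$.

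The next step is to lower-bound this mixed-frequency polynomial on $[0,\pi/2]$. I would write each harmonic in amplitude-phase form and either compare amplitudes directly or analyze the unique critical point determined by the stationarity equation $\tfrac{d}{d\alpha}f_q''(\alpha)=0$, which is a low-degree trigonometric equation in $\alpha$. This reduces the positivity of $f_q''$ to an algebraic inequality in the two parameters $(r,\theta)=(|q|,\arctan(q_2/q_1))$ subject to $r\in[0.54,r_{\max})$, $\theta\in[0,\pi/2]$, and $q=(r\cos\theta,r\sin\theta)\in\mathfrak{R}^{+}$, where $r_{\max}$ is the smallest positive root of $x^{3}-c_{0}x+1$. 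I would then exploit monotonicity of the coefficients in $\theta$ together with the box bound $|q_{2}|\leq 2\cdot 3^{-4/3}$ to reduce to a one-parameter inequality in $r$, which I would verify by the computer-assisted plotting procedure used for Proposition~\ref{prop 4.4} and certified in Appendix~\ref{pf of fig}.

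The main obstacle is the corner regime $q\to(3^{-1/3},0)$, where $R\to 0$, $w(q)\to 0$, and $\mathcal{H}(q)\to\operatorname{diag}(-8,4)$ simultaneously. There the first-harmonic amplitude is of order $R\cdot|w(q)|$ while the second-harmonic amplitude is of order $R^{2}$, so the simple amplitude comparison is not tight enough and the $\theta$-monotonicity argument collapses near $q_{2}=0$. To handle this regime I would pass to blow-up coordinates around $(3^{-1/3},0)$, expand $w(q)=(q_{2}/r^{3},\,3q_{1}-q_{1}/r^{3})$ and $R^{2}=3q_{1}^{2}+2/r-3^{4/3}$ to leading order in the blown-up variables, and verify that the two competing harmonics combine to an expression carrying a common positive factor which survives the near-cancellation. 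This is the \emph{blowing up the corner} strategy the paper announces for Step~3, and after the blow-up the limiting inequality is expected to be elementary to certify.
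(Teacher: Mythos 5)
Your structural calculation of $f_q''$ is correct: differentiating twice and using $u''=-u$ plus the orthonormality of $\{u,u'\}$ does produce a pure sum of a first harmonic (amplitude $2R|\mathcal{H}w|$) and a second harmonic (amplitude $2R^{2}\sqrt{(\mathcal{H}_{11}-\mathcal{H}_{22})^{2}+4\mathcal{H}_{12}^{2}}$) with vanishing mean. But from that point on your proposal is a plan rather than a proof — ``I would write\ldots'', ``I would exploit\ldots'', ``I would pass to blow-up coordinates\ldots'' — and the concern you yourself raise (that a naive amplitude comparison fails in the corner regime where $R\to0$ and $w\to0$ simultaneously) is correct. Your proposed fix, blowing up the corner, is overkill for this lemma and would make the argument much heavier than it needs to be; in the paper the blow-up is reserved for Step 3 of the main theorem, not for Lemma~\ref{Lem 4.9}.

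The paper avoids the amplitude-versus-amplitude comparison entirely by a different regrouping. Writing $f_q''$ explicitly in the form
\[
f_q''(\alpha)= 2\sqrt{2c-2c_0}\Bigl[({\textstyle\frac{9}{r^2}}-3r)\cos\theta\sin\theta\cos\alpha+({\textstyle\frac{1}{r^5}}-{\textstyle\frac{2c}{r}})\sin\alpha\Bigr]
+(2c-2c_0)\Bigl[2(-{\textstyle\frac{1}{r^3}}+{\textstyle\frac{4c}{r^2}}-3)\cos2\alpha-12\cos\theta\sin\theta\sin2\alpha\Bigr],
\]
it pairs the $\sin\alpha$ term with the $\cos2\alpha$ term, and the $\cos\alpha$ term with the $\sin2\alpha$ term. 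For the first pair it invokes the inequality, already established as Claim~2 in the proof of Lemma~\ref{Lem 4.7}, that $\frac{1}{r^5}-\frac{2c}{r}>\sqrt{2c-2c_0}\,(-\frac{1}{r^3}+\frac{4c}{r^2}-3)>0$, together with the elementary fact that if $b>a>0$ then $a\cos2\alpha+b\sin\alpha>0$ on $[0,\pi/2]$. For the second pair it factors out $\cos\theta\sin\theta\cos\alpha\ge0$ and reduces to $\frac{9}{r^2}-3r-12\sqrt{2c-2c_0}\sin\alpha\ge0$, which follows from the crude size bounds $\frac{9}{r^2}-3r>8\cdot3^{2/3}$ and $2c-2c_0<0.3$. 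This regrouping is exactly what handles the corner gracefully: the common factor $\sqrt{2c-2c_0}$ is pulled out once up front, and the remaining coefficients are controlled by inequalities that degrade (but do not change sign) as $q\to(3^{-1/3},0)$. No blow-up is needed. You should either carry out the amplitude/critical-point analysis you sketched and show it actually survives the corner, or, more economically, adopt the paper's pairing strategy and cite the bounds from Lemma~\ref{Lem 4.7}.
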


Using Lemma \ref{Lem 4.9}, we know that the tangent line at any point in this interval will lie below the graph of $f_q$. Let $l_q$ be the function for the tangent line at ${\pi \over 4}$, that is, $l_q$ is linear, $l_q ({\pi \over 4})=f_q({\pi \over 4})$ and ${d l_q \over d \alpha}({\pi \over 4})={d f_q \over d \alpha}({\pi \over 4})$. Then the inequality $f_q(\alpha) \ge l_q (\alpha)$ holds for every $\alpha \in [0, {\pi \over 2}]$. Moreover, we have the following obvious inequality 
$$\min_{\alpha \in [0, {\pi \over 2}]} l_q (\alpha) \ge \min_{\alpha \in [{\pi \over 4}-1, {\pi \over 4}+1]} l_q (\alpha)=\min \{l_q ({\pi \over 4}-1), l_q ({\pi \over 4}+1)\}$$
using ${\pi \over 4}-1<0$ and ${\pi \over 2}<{\pi \over 4}+1$ for the line $l_q$. We summarize the above arguments to get the following lower bound
$$\min_{\alpha \in [0, {\pi \over 2}]} f_q(\alpha) \ge \min_{\alpha \in [0, {\pi \over 2}]} l_q (\alpha) \ge \min \{l_q({\pi \over 4}-1), l_q({\pi \over 4}+1) \}.$$
Thus it is enough to prove $l_q ({\pi \over 4} \pm 1)>0$ in order to prove $\min_{\alpha \in [0, {\pi \over 2}]} f_q(\alpha)>0$ for a fixed $q$. We will prove $l_q ({\pi \over 4} \pm 1)>0$ one by one for some domains of $q$.
\begin{prop}
\label{prop 4.10}
The inequality $l_q ({\pi \over 4}+1)>0$ holds for every $q \in \mathfrak{R}^{+} \backslash B_{0.54}(0)$.
\end{prop}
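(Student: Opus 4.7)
The plan is to write the tangent-line value $l_q(\pi/4+1) = f_q(\pi/4) + f_q'(\pi/4)$ as an explicit function of $q$ alone, and then reduce positivity to a one-variable inequality amenable to a rigorously verified computer plot.

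First I would expand
\begin{align*}
f_q(\pi/4) &= w^t \mathcal{H} w + 2\rho\, w^t \mathcal{H} \hat u + \rho^2\, \hat u^t \mathcal{H} \hat u, \\
f_q'(\pi/4) &= 2\rho\, w^t \mathcal{H} \hat u' + 2\rho^2\, \hat u^t \mathcal{H} \hat u',
\end{align*}
where $\rho(q) = \sqrt{3q_1^2 + 2/|q| - 3^{4/3}}$, $\hat u = (\cos(\theta+\pi/4), \sin(\theta+\pi/4))^t$, $\hat u' = (-\sin(\theta+\pi/4), \cos(\theta+\pi/4))^t$, and I have suppressed the $q$-dependence of $w$ and $\mathcal{H}$. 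Since $s_{q,\theta+\alpha}$ depends on $\alpha$ only through its unit direction, no further differentiation rules are needed. Substituting the explicit formulas for $\mathcal{H}(q)$ and $w(q)$ from Theorem \ref{Thm 4.3} presents $l_q(\pi/4+1)$ as a sum of trigonometric rational functions of $r = |q|$ and $\theta$, together with a $\rho$-linear correction.

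Next, using the symmetry of Remark \ref{Rem 3.1} I would restrict to $\theta \in [0,\pi/2]$ and to $r \in [0.54, \bar r(\theta)]$, where $\bar r(\theta)$ denotes the smaller positive root of $\tfrac{3}{2}(\cos^2\theta)r^3 - c_0 r + 1 = 0$ from Lemma \ref{Lemma 3.1}. For each fixed $r$ I would bound the $\theta$-dependent coefficients $w^t\mathcal{H}\hat u$, $w^t\mathcal{H}\hat u'$, $\hat u^t\mathcal{H}\hat u$, and $\hat u^t\mathcal{H}\hat u'$ by their worst-case values on the admissible $\theta$-range, obtaining a lower bound $L(r)$ that depends only on $r$. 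Positivity of $L(r)$ on $[0.54, 3^{-1/3}]$ would then be certified by the same interval-arithmetic scheme that validates Figure \ref{fig 2}, detailed in the appendix.

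The main obstacle is that $\rho$, $w$, and the tangential Hessian all vanish simultaneously as $q$ approaches the critical corner $(3^{-1/3},0)$, so $l_q(\pi/4+1)$ itself tends to zero there. A naive worst-case estimate in $\theta$ is likely to fail precisely where it must not. I would therefore split the $(r,\theta)$-range into a corner neighborhood, where the vanishing orders of $\rho^2 = 3r^2\cos^2\theta + 2/r - 3^{4/3}$ and of the components of $w(q) = (q_2/|q|^3,\; 3q_1 - q_1/|q|^3)^t$ must be matched by hand, anticipating the blow-up technique reserved for \textbf{Step 3}, and its complement, where the crude replacement of $\theta$ by its worst value is sharp enough to make $L(r)$ positive.
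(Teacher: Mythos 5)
Your high-level plan correctly identifies the key ingredients: writing $l_q(\pi/4+1)=f_q(\pi/4)+f_q'(\pi/4)$ explicitly (your expansion of the two terms is right), reducing to a one-variable computer-verified inequality, and anticipating that the degeneracy at $(3^{-1/3},0)$ must be handled by a blow-up. The paper's proof for Proposition 4.10 does all of this, but the middle step is carried out differently and more sharply than you propose. Rather than bounding the coefficients $w^t\mathcal{H}\hat u$, $\hat u^t\mathcal{H}\hat u$, etc.\ \emph{separately} by $\theta$-worst-case values for fixed $r$, the paper first rewrites $w^t\mathcal{H}w$ in terms of $r$ and the boundary energy $c$ (using $\cos^2\theta=(2cr-2)/(3r^3)$), so that the entire tangent-line value becomes an honest quadratic polynomial in $\rho=\sqrt{2c-2c_0}$; then, in the blow-up coordinates $(x,k)$ of Lemma \ref{Lem 5.2}, both $\rho^2$ and the constant term carry the explicit factor $(3^{-1/3}-x)^2$ via decompositions (\ref{decom1 in prop 4.10}) and (\ref{decom2 in prop 4.10}). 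After that factor is cancelled the residual expression is a quadratic $g_x(t)$, and its positivity on $[0.56,3^{-1/3})$ is proved by a \emph{discriminant-negative} argument (Figures \ref{fig 9}, \ref{fig 10}) — positive for all real $t$, not just admissible ones — while on $[0.54,0.56]$ a monotonicity-plus-endpoint argument pushes $t$ to one end of its admissible range (Figures \ref{fig 11}, \ref{fig 12}).

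The gap in your plan is in the phrase ``bound the $\theta$-dependent coefficients by their worst-case values.'' Because $\rho$ itself is $\theta$-dependent and enters quadratically with coefficients of both signs, decoupling the worst cases of $\rho$, $w^t\mathcal{H}\hat u$, and $\hat u^t\mathcal{H}\hat u$ can grossly underestimate the expression: the off-diagonal term $2\rho\, w^t\mathcal{H}\hat u$ and the diagonal term $\rho^2\, \hat u^t\mathcal{H}\hat u$ do not attain their individual extrema at the same $\theta$. Unless you retain the joint quadratic structure (i.e.\ minimize $At^2 - Bt + C$ over $t$ as the paper does), your $L(r)$ is not obviously positive even away from the corner, and you would have no way to know without essentially rederiving the paper's quadratic analysis. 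Likewise, ``match vanishing orders by hand'' at the corner is exactly the factoring of $(3^{-1/3}-x)^2$, which requires the specific algebraic identities the paper records; without them the quotient $d(x,k,\alpha)$ has no continuous extension and interval arithmetic near $r=3^{-1/3}$ cannot certify positivity. Note also that the blow-up coordinates are used throughout the whole interval $[0.54,3^{-1/3})$, not just in a corner neighborhood, because the uniform discriminant bound is what makes the middle range tractable too.
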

\begin{prop}
\label{prop 4.11}
The inequality $l_q ({\pi \over 4}-1)>0$ holds for every $q \in \mathfrak{R}^{+} \cap (B_{0.63}(0) \backslash B_{0.54}(0))$.
\end{prop}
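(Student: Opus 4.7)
The plan is to convert Proposition 4.11 into an explicit two-variable inequality on a compact annular sector and verify it there. Since $l_q$ is by construction the affine function with $l_q(\pi/4) = f_q(\pi/4)$ and $l_q'(\pi/4) = f_q'(\pi/4)$, one has
\[
l_q\bigl(\tfrac{\pi}{4}-1\bigr) \;=\; f_q\bigl(\tfrac{\pi}{4}\bigr) \;-\; f_q'\bigl(\tfrac{\pi}{4}\bigr),
\]
so the proposition reduces to proving the uniform strict inequality $f_q(\pi/4) > f_q'(\pi/4)$ on $q \in \mathfrak{R}^{+} \cap (B_{0.63}(0) \backslash B_{0.54}(0))$.

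To set up the computation I would expand
\[
f_q(\alpha) \;=\; w(q)^t \mathcal{H}(q) w(q) \;+\; 2\, w(q)^t \mathcal{H}(q)\, s_{q,\theta+\alpha} \;+\; s_{q,\theta+\alpha}^t \mathcal{H}(q)\, s_{q,\theta+\alpha}
\]
and differentiate, using that only the $s$-factor depends on $\alpha$ with $\tfrac{d}{d\alpha} s_{q,\theta+\alpha} = \rho(q)(-\sin(\theta+\alpha), \cos(\theta+\alpha))^t$, where $\rho(q) = \sqrt{3 q_1^2 + 2/|q| - 3^{4/3}}$. Passing to polar coordinates $q_1 = r\cos\theta$, $q_2 = r\sin\theta$ on the compact rectangle $[0.54, 0.63] \times [0, \pi/2]$ (with a few corners trimmed by the defining inequalities of $\mathfrak{R}^{+}$), the $\theta$-translation built into the definition of $f_q$ meshes cleanly with the entries of $w(q)$ and $\mathcal{H}(q)$, and the quantity $f_q(\pi/4) - f_q'(\pi/4)$ becomes an explicit closed-form function $G(r,\theta)$ involving polynomials and the single radical $\rho(q)$.

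The second step is to show $G(r,\theta) > 0$ on this rectangle. Unlike Proposition 4.4, where a uniform radial estimate was strong enough to collapse the problem to one variable, here the angular dependence genuinely matters, so I would follow the computer-assisted strategy announced at the end of Section \ref{preparation and strategy}: produce a plot of $G$ to locate its minimum, then certify strict positivity on a sufficiently fine grid. Concretely, an explicit upper bound $M$ on $\|\nabla G\|_\infty$ over the rectangle controls the oscillation of $G$ across any grid cell of diameter $d$, so verifying $G \geq M d$ with margin at the grid nodes (using the interval-arithmetic scheme of Appendix \ref{pf of fig}) yields a rigorous positivity certificate of exactly the same character as the one-variable checks used earlier.

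The main obstacle is the corner where $r$ is close to $0.63$ and $\theta$ is close to $0$, because there $q$ approaches the critical point $(3^{-1/3}, 0)$ and the quantities $w(q) + s_{q,\theta+\pi/4}$, $\mathcal{H}(q)$, and hence $G(r,\theta)$ all degenerate simultaneously. This degeneration is precisely what caps the radial range at $0.63$ in the statement: beyond that threshold the supporting-tangent-line method no longer leaves enough margin, and one must switch to the blow-up analysis of Step 3. The delicate point in the proof is therefore to pick the grid resolution fine enough that the verified lower bound on $G$ remains strictly positive up to $r = 0.63$, where the argument hands off to \hyperlink{step 3}{$\bold{Step \ \ 3}$}.
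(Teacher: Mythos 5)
Your reduction $l_q(\tfrac{\pi}{4}-1) = f_q(\tfrac{\pi}{4}) - f_q'(\tfrac{\pi}{4})$ is correct, but the verification strategy has a genuine gap. You propose a two-variable grid search on $(r,\theta)$ certified by a uniform bound on $\|\nabla G\|_\infty$. However $G$ contains terms linear in $\rho = \sqrt{3q_1^2 + 2/|q| - 3^{4/3}}$ (coming from the cross term $2w^t\mathcal{H}s$), and $\rho$ vanishes exactly on the boundary of Hill's region, which cuts through the annulus $0.54 < r < 0.63$. Differentiating $\rho$ produces a factor $1/\rho$, so $\nabla G$ is \emph{unbounded} as one approaches that boundary curve; there is no finite $M$ bounding $\|\nabla G\|_\infty$ on the closure of the relevant region, and the grid-plus-oscillation argument collapses. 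You have also misidentified where the difficulty lives: the Lagrange critical point is at $(3^{-1/3},0)$ with $3^{-1/3}\approx 0.693 > 0.63$, so it lies \emph{outside} the annulus of Proposition \ref{prop 4.11} entirely; what actually degenerates in this range is the radius $\rho$ of the admissible momentum disk, all along the Hill boundary, not merely near $\theta = 0$.

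This square-root singularity is precisely what the paper's change of variables is built to absorb. After passing to the blow-up coordinates $(x,k)$ of Lemma \ref{Lem 5.2}, the identity (\ref{id1 in prop 4.10}) shows $\rho^2 = \frac{1-k}{1+k(3x^3-1)}(3^{-1/3}-x)^2\bigl(3 + \tfrac{2\cdot 3^{2/3}}{x}\bigr)$, so $\rho$ becomes $(3^{-1/3}-x)$ times $\sqrt{1-k}$ times a smooth positive factor; one then factors the common $(3^{-1/3}-x)^2$ out of the whole expression using (\ref{decom1 in prop 4.10}), and the remaining dependence on $k$ is bounded above and below on each of five $k$-subintervals by functions of $x$ alone. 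That analytic dimension reduction turns the problem into a family of one-variable inequalities on $[0.54, 0.63]$, each certified by the one-dimensional grid scheme of Appendix \ref{pf of fig}. If you want to salvage the direct two-variable route you would need either to switch one coordinate to something like $\rho$ itself (or $k$), so that the square root is resolved before taking derivatives, or to split off a boundary collar where the dominant term $w^t\mathcal{H}w$ carries the positivity on its own; as written, the proposal does not do either, and the uniform derivative bound it relies on does not exist.
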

Because the inequality in Proposition \ref{prop 4.11} holds only for $q \in \mathfrak{R}^{+} \cap (B_{0.63}(0) \backslash B_{0.54}(0))$, Proposition \ref{prop 4.10} and \ref{prop 4.11} cannot cover the whole Hill's region and can imply only \hyperlink{step 2}{$\bold{Step \ \ 2}$}.

We will use blow-up coordinates for the proof of \hyperlink{step 3}{$\bold{Step \ \ 3}$} to resolve the convergence of $f_q$ to $0$ at the critical point. We will see $f_q(\alpha)$ as a function of $q$ and $\alpha$ again in order to factor out the zeros at the critical point $(3^{-1 \over 3}, 0)$. Namely, we define the function $G(q, \alpha):=f_q(\alpha)$. We introduce a lower bound function $D(q, \alpha)$ for $G(q, \alpha)$, that is $G(q, \alpha) \ge D(q, \alpha)$ by removing small terms. Roughly speaking, we remove the high degree terms of $(3^{-1 \over 3}-|q|)$ from $G(q, \alpha)$. Since we have $\lim_{q \rightarrow (3^{-1 \over 3}, 0)} {D(q, \alpha)}=0$, we want to factor out the factor $(3^{-1 \over 3}-|q|)$ as many times as possible in order to obtain the positivity of the function $D$ easier. We will see that the quotient ${D(q, \alpha) \over (3^{-1 \over 3}-|q|)^2}$ is well-defined on $\mathfrak{R}^{+} \backslash B_{0.54}(0)$. However it does not have a continuous extension to the boundary of $\mathfrak{R}^{+} \backslash B_{0.54}(0)$ because $\lim_{q \rightarrow (3^{-1 \over 3}, 0)}{D(q, \alpha) \over (3^{-1 \over 3}-|q|)^2}$ does not exist. Thus we want to enlarge near this critical point. We introduce a coordinate change which blows up the critical point by taking into account the direction to the critical point. We will see this coordinate change as a composition of two coordinate changes and we will prove its well-definedness in section \ref{proof}. We summarize the result here. We define the coordinate changes $\Phi$ and its inverse $\Psi$
$$\Phi : (0.54, 3^{-1 \over 3}) \times [0,1) \rightarrow \mathfrak{R}^{+} \backslash B_{0.54},$$
$$ \Phi(r, k)=(r \cos \theta(r, k), r \sin \theta(r, k)) \textrm{ where } \cos^2 \theta(r, k)={{1+3k(3^{1 \over 3}r-1)} \over {1+k(3r^3-1)}}$$
$$ \Psi :\mathfrak{R}^{+} \backslash B_{0.54} \rightarrow (0.54, 3^{-1 \over 3}) \times [0,1),$$
$$ \Psi(q)=(r, {{\sin^2 \theta} \over {3r^3 \cos^2 \theta -3^{4\over3}r+3-\cos^2 \theta}}) \textrm{ where } (q_1, q_2)=(r \cos \theta, r \sin \theta)$$
between a rectangle $(0.54, 3^{-1 \over 3}) \times [0, 1)$ and $\mathfrak{R}^{+} \backslash B_{0.54}$.

\begin{figure}
\centering
\includegraphics[]{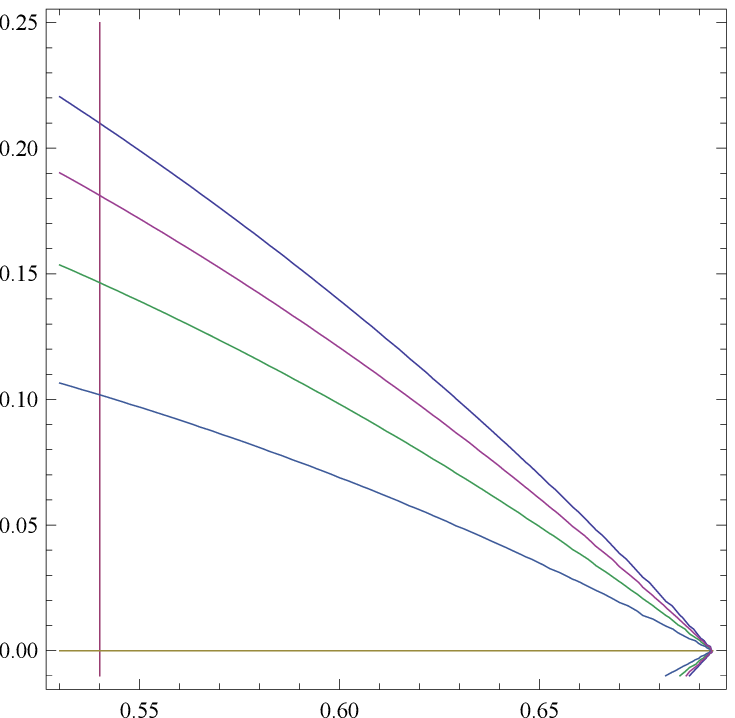}
\includegraphics[]{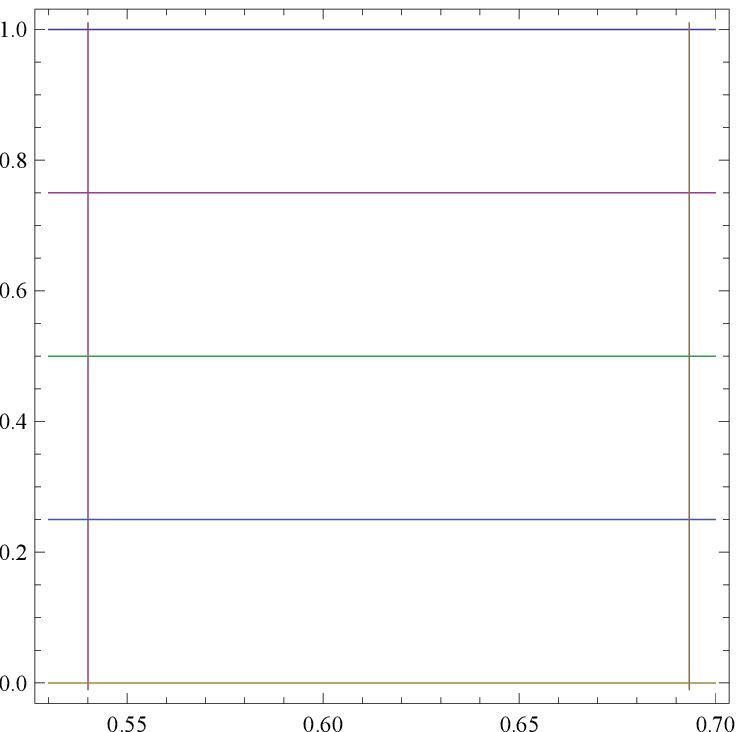}
\caption{Coordinate change which blows up the corner $(3^{-1 \over 3}, 0)$}
\label{fig 4}
\end{figure}

As we can see in Figure \ref{fig 4}, the critical point $(3^{-1 \over 3}, 0)$ corresponds to one side of the rectangle and also this side keeps the information of direction to the critical point like a "blow-up" procedure. We define the function $d(r, k, \alpha):={D(\Phi(r,k), \alpha) \over (3^{-1 \over 3}-r)^2}$ on the new coordinate $(0.54, 3^{-1 \over 3}) \times [0,1)$. Then it is sufficient to prove that $d(r, k, \alpha)>0$ in $(r, k, \alpha) \in (0.63, 3^{-1 \over 3}) \times [0, 1) \times [0, {\pi \over 2}]$. We will prove this in the following procedure.
\\ \\
Claim 1. The function $d : (0.54, 3^{-1 \over 3}) \times [0,1) \times [0, {\pi \over 2}] \rightarrow \mathbb{R}$ can be extended continuously to the boundary of its domain.
\\ \\
We will denote this extension also by $d$. Thus we have the function $d: [0.54, 3^{-1 \over 3}] \times [0, 1] \times [0, {\pi \over 2}] \rightarrow \mathbb{R}$ up to the boundary.
\\ \\
Claim 2. The function $d$ is monotone decreasing on $[0.63, 3^{-1 \over 3}] \times [0, 1] \times [0, {\pi \over 2}]$ with respect to $r$. 
\\ \\
We will show that ${{\partial d} \over {\partial r}}<0$ on $[0.63, 3^{-1 \over 3}] \times [0, 1] \times [0, {\pi \over 2}]$ to prove Claim 2.
\\ \\
Claim 3. The inequality $d(3^{-1 \over 3}, k, \alpha) \ge 0$ holds for all $(k, \alpha) \in [0, 1] \times [0, {\pi \over 2}]$. 
\\ \\
By showing the above three Claims, we will get $d(r, k, \alpha)>d(3^{-1 \over 3}, k, \alpha) \ge 0$ for all $(0.63, 3^{-1 \over 3}) \times [0, 1] \times [0, {\pi \over 2}]$. This will imply the desired inequality $G(q, \alpha)>0$ on $\mathfrak{R}^{+} \backslash B_{0.63}(0)$ for \hyperlink{step 3}{$\bold{Step \ \ 3}$}. This will complete the proof of \hyperlink{step 3}{$\bold{Step \ \ 3}$} by Proposition \ref{prop 4.5}.

We have introduced a numerical form of fiberwise convexity and the strategy of its proof. We will give the details of computations in section \ref{proof}, .

\section{Proof of Theorem \ref{Thm 4.3}}
\label{proof}

We introduced the notations to state and modified the main Theorem in section \ref{preparation}. We will use the notations again. We determined the region where the bounded component of the curve $H_{c, p}^{-1}(0)$ can lie on. We denoted the Hill's region by $\mathfrak{R}:=\{(q_1, q_2) \in \mathbb{R}^2 | {1\over \sqrt{q_1^2+q_2^2}}+{3 \over 2}q_1^2>c_0, |q_1|<3^{-1 \over 3}, |q_2|<2 \cdot 3^{-4 \over 3}\}$.
We defined a tangent vector $v(q)$ of $H_{c, 0}^{-1}(0)$ and the Hessian $\mathcal{H}(q)$ of $H_{c, 0}$ at each $q \in H_{c, 0}^{-1}(0)$. With these notations, we could get the following expressions
$$J\triangledown H_{c,p}(q)=
\begin{pmatrix} q_2+{q_2 \over |q|^3}+p_1 \\ 2q_1-{q_1 \over |q|^3}+p_2  
\end{pmatrix}
=v(q)+p \in T_q H_{c, p}^{-1}(0),$$
$$Hess H_{c, p}(q)=Hess H_{c, 0}(q)
={1 \over |q|^5} \begin{pmatrix} -2|q|^5+|q|^2-3q_1^2 & -3q_1 q_2 \\ -3q_1 q_2 & |q|^5+|q|^2-3q_2^2 
\end{pmatrix}=\mathcal{H}(q)$$
for a tangent vector and Hessian of $H_{c, p}$ at $q \in H_{c, p}^{-1}(0)$ for every $p \in \mathbb{R}^2$. Then the convexity of the closed curve $H_{c, p}^{-1}(0) \cap \mathfrak{R}$ is equivalent to the positivity of the tangential Hessian. Hence we have to prove the inequality
$$(J\triangledown H_{c,p}(q))^t Hess H_{c, p}(q) (J\triangledown H_{c,p}(q))=(v(q)+p)^t \mathcal{H}(v(q)+p)>0$$
for all $q \in H_{c, p}^{-1}(0) \cap \mathfrak{R}$. It is hard to describe the trajectory of $q \in H_{c, p}^{-1}(0) \cap \mathfrak{R}$. We will see this inequality with fixing $q$ rather than $p$ and $c$. We have seen that the equivalent condition.
$$q \in H_{c, p}^{-1}(0) \cap \mathfrak{R} \iff q \in \mathfrak{R} \textrm{ and } |p+Jq|^2<3q_1^2+{2 \over |q|}-2c_0$$
We also defined  that $w(q):=v(q)-Jq$, $s:=p+Jq$ and so $v(q)+p=w(q)+s$ for the computational convenience. We have established Theorem \ref{Thm 4.3}.
\begin{Thm 4.3.}
With above notations, the inequality $(w(q)+s)^t \mathcal{H}(q)(w(q)+s)>0$ holds for every $q \in \mathfrak{R}$ and $|s|^2 \le 3q_1^2+{2\over|q|}-2c_0$. 
\end{Thm 4.3.}
We divided Theorem \ref{Thm 4.3} into three steps by the position of $q$.
\begin{itemize}
\item $\bold{Step \ \ 1}$: $q \in  \mathfrak{R} \cap B_{0.54}(0)$ 
\item $\bold{Step \ \ 2}$: $q \in  \mathfrak{R} \cap (B_{0.63}(0) \backslash B_{0.54}(0))$ 
\item $\bold{Step \ \ 3}$: $q \in  \mathfrak{R} \backslash B_{0.63}(0)$ 
\end{itemize}
As we mentioned in section \ref{strategy}, the proof of \hyperlink{step 1}{$\bold{Step \ \ 1}$} can be done by proving Proposition \ref{prop 4.4}.

\begin{prop 4.4.}
We have the following estimate.
\begin{eqnarray*}
& &(w(q)+s)^t \mathcal{H}(q) (w(q)+s)
\\ &\ge& {11 \over 12}{1\over r^7}-{10\over3}{1\over r^4}-{29 \over 6}{1\over r}+4r^2-{3\over4}r^5-2({1 \over x^5}-{2 \over x^2})\sqrt{3r^2+{2 \over r}-3^{4\over3}}
\\ & & -(2+{2 \over r^3})(3r^2+{2 \over r}-3^{4\over3})
\end{eqnarray*}
for all  $q \in \mathfrak{R} \cap B_{0.54}(0)$ where $r=|q|$. Moreover, the following inequality 
$${11 \over 12}{1\over r^7}-{10\over3}{1\over r^4}-{29 \over 6}{1\over r}+4r^2-{3\over4}r^5-2({1 \over x^5}-{2 \over x^2})\sqrt{3r^2+{2 \over r}-3^{4\over3}}-(2+{2 \over r^3})(3r^2+{2 \over r}-3^{4\over3})>0$$
holds for every $r \in (0, 0.54)$.
\end{prop 4.4.}

\begin{proof}[Proof of Proposition \ref{prop 4.4}.] \hypertarget{pf of prop 4.4}
We will achieve the estimate. After that, the second inequality can be seen simply by its graph. We will omit $q$ of $w(q)$ and $\mathcal{H}(q)$ for notational convenience.
$$(w+s)^t \mathcal{H}(w+s)=w^t \mathcal{H}w+2w^t \mathcal{H}s+s^t \mathcal{H} s$$
We will make several estimate for each of the terms. We express the first term
$$w^t \mathcal{H} w={1 \over |q|^7}-{{5q_1^2+2q_2^2} \over |q|^6}+{3q_1^2 \over |q|^3}-{27q_1^2 q_2^2 \over |q|^5}+9q_1^2$$
in the polar coordinates $q_1=r \cos \theta, q_2=r \sin \theta$. Then we have the function $f_1(r, \theta)$
$$w^t \mathcal{H}w={1 \over r^7}-{{5 \cos^2 \theta} \over r^4}-{{2 \sin^2 \theta} \over r^4}+{{3 \cos^2 \theta} \over r}-{{27 \cos^2 \theta \sin^2 \theta} \over r}+9r^2 \cos^2 \theta=:f_1(r, \theta)$$
in terms of $r, \theta$. Differentiating $f_1$ with respect to $\theta$
\begin{eqnarray*}
{\partial f_1 \over \partial \theta}&=&{6 \over r^4}\cos \theta \sin \theta -{6 \over r}\cos \theta \sin \theta -{54 \over r}\cos \theta \sin \theta(\cos^2 \theta -\sin^2 \theta)-18r^2 \cos \theta \sin \theta
\\ &=&\cos \theta \sin \theta \big( {6 \over r^4}-{6 \over r}-{54 \over r}(2\cos^2 \theta -1)-18r^2 \big)
\end{eqnarray*}
gives us the candidates for the minimum points. Namely, $w^t \mathcal{H} w$ attains its minimum at one of these cases: $\cos^2 \theta=1, 0$ or ${1 \over 18r^3}+{4 \over 9}-{r^3 \over 6}$ for a fixed $r$.
\begin{eqnarray*}
w^t \mathcal{H}w \ge \min 
\begin{cases} 
1) \quad {1\over r^7}-{5 \over r^4}+{3 \over r}+9r^2 & \textrm{ when } \cos^2 \theta=1,
\\ 2) \quad {1 \over r^7}-{2 \over r^4} & \textrm{ when } \cos^2 \theta=0,
\\ 3) \quad {11\over12}{1 \over r^7}-{10 \over 3}{1 \over r^4}-{29 \over 6}{1 \over r}+ 4r^2-{3\over4}r^5  & \textrm{ when } \cos^2 \theta={1 \over 18r^3}+{4 \over 9}-{r^3 \over 6}.
\end{cases} 
\end{eqnarray*}
Claim: $2) \ge 1) \ge 3)$
\begin{proof}
\begin{eqnarray*}
1) \ge 3) : & & 12r^7 (({1 \over r^7}-{5 \over r^4}+{3 \over r}+9r^2)-({11 \over 12}{1 \over r^7}-{10 \over 3}{1 \over r^4}-{29 \over 6}{1 \over r}+4r^2-{3 \over 4}r^5))
\\ &=&9r^{12}+60r^9+94r^6-20r^3+1
\\ &=&(3r^6+10r^3-1)^2 \ge 0
\end{eqnarray*}
\begin{eqnarray*}
2) \ge 1) : & & r^4 (({1 \over r^7}-{2 \over r^4})-({1 \over r^7}-{5 \over r^4}+{3 \over r}+9r^2))
\\ &=&3-3r^3-9r^6
\\ &>&3-3 \times {1\over 3}- 9 \times {1\over 9}=1
\end{eqnarray*}
Here we use $r^3<0.54^3<{1 \over 3}$.
\end{proof}
By above Claim, we know that $3)$ is a lower bound for $w^t \mathcal{H}w$. Although $3)$ makes sense only when $0 \le {1 \over 18r^3}+{4 \over 9}-{r^3 \over 6} \le 1$, this is not required to get a lower bound. We have an estimate
$$w^t \mathcal{H}w \ge {11\over12}{1 \over r^7}-{10 \over 3}{1 \over r^4}-{29 \over 6}{1 \over r}+ 4r^2-{3\over4}r^5$$
for $w^t \mathcal{H}w$.

We make an estimate for the second term. We have $|\mathcal{H}w|^2$
$$\mathcal{H}w={1\over |q|^5}\begin{pmatrix} {q_2 \over |q|}-2q_2|q|^2-9q_1^2 q_2 \\ -{q_1 \over |q|}+2q_1 |q|^2-9q_1 q_2^2+3q_1|q|^5 \end{pmatrix}$$
$$\implies |\mathcal{H}w|^2={1\over |q|^{10}}-{4\over |q|^7}+{4 \over|q|^4}+{81q_1^2 q_2^2 \over |q|^8}-{6q_1^2 \over |q|^6}+{12q_1^2 \over |q|^3}-{54q_1^2 q_2^2 \over |q|^5}+9q_1^2$$
$$={1\over r^{10}}-{4\over r^7}+{4 \over r^4}+{81\over r^4}\cos^2 \theta \sin^2 \theta-{6\over r^4}\cos^2 \theta +{12\over r}\cos^2 \theta -{54\over r}\cos^2 \theta \sin^2 \theta +9r^2 \cos^2 \theta$$
in terms of $r, \theta$. This has its maximum at one of these cases: $\cos^2 \theta =0, 1$ or ${-3r^6-4r^3+2 \over 18r^3-27}$ by the similar computation as before. Therefore we have
\begin{eqnarray*}
|\mathcal{H}w|^2 \le \max 
\begin{cases} 
1') \quad{1\over r^{10}}-{4 \over r^7}+{4 \over r^4} & \textrm{ when } \cos^2 \theta=0,
\\ 2') \quad {1 \over r^{10}}-{4 \over r^7}-{2 \over r^4}+{12 \over r}+9r^2 & \textrm{ when } \cos^2 \theta=1,
\\ 3') \quad {1\over r^{10}}-{4 \over r^7}+{4 \over r^4}+{3(3r^6+4r^3-2) \over r^4} & \textrm{ when } \cos^2 \theta={-3r^6-4r^3+2 \over 18r^3-27}.
\end{cases}
\end{eqnarray*}
an upper bound for $|\mathcal{H}w|^2$. As before, it is easy to see that $1')>2'), 3')$. We get an estimate
$$|\mathcal{H}w|^2 \le {1\over r^{10}}-{4 \over r^7}+{4 \over r^4}=({1 \over r^5}-{2 \over r^2})^2$$
for $|\mathcal{H}w|$.

Finally, we will investigate the third term which is related with the eigenvalue of $\mathcal{H}$. The characteristic polynomial $p_{\mathcal{H}}(\lambda)$ of $\mathcal{H}=\begin{pmatrix} -2+{1 \over |q|^3}-{3q_1^2 \over |q|^5} & {-3q_1 q_2 \over |q|^5} \\ {-3q_1 q_2 \over |q|^5} & 1+{1\over |q|^3}-{3q_2^2 \over |q|^5} \end{pmatrix}$ has the following form.
$$p_{\mathcal{H}}(\lambda)=\lambda^2+(1+{1\over|q|^3})\lambda+(-2-{1\over |q|^3}-{2\over |q|^6}+{{6q_2^2-3q_1^2}\over |q|^5})$$
We claim that $\det (\mathcal{H})=-2-{1\over |q|^3}-{2\over |q|^6}+{{6q_2^2-3q_1^2}\over |q|^5}<0$ for all $q$ in the Hill's region by the following computation.
$$-2-{1\over |q|^3}-{2\over |q|^6}+{{6q_2^2-3q_1^2}\over |q|^5}=-2-{2\over |q|^6}+{{5q_2^2 - 4q_1^2} \over |q|^5}=-2+{{4q_2^2-2q_1^2-2c} \over |q|^5}< -2+{{4-2c_0} \over |q|^5}<0$$
Here we use $q_1^2-{1 \over 2}q_2^2+{1 \over |q|}=c \iff {2 \over |q|^6}={{2c-2q_1^2+q_2^2}\over |q|^5}$, $c>c_0>2$ and $q_2<1$ on the Hill's region.
Then we have one positive and one negative eigenvalue, say $\lambda_{+}, \lambda_{-}$ respectively. We have a lower bound
$$\lambda_{-}={1\over2} \Big( -(1+{1 \over |q|^3})-\sqrt{9+{6\over|q|^3}+{9 \over |q|^6}-4({{6q_2^2-3q_1^2} \over |q|^5})} \Big) \ge -(2+{2 \over |q|^3})$$
for $\lambda_{-}$. If we summarize all these results, then we can get an estimate for $(w+s)^t \mathcal{H}(w+s)$.
$$(w+s)^t \mathcal{H}(w+s)=w^t \mathcal{H}w+2w^t \mathcal{H}s+s^t \mathcal{H}s \ge w^t \mathcal{H}w-2|\mathcal{H}w||s|+\lambda_{-}|s|^2$$
$$ \ge {11 \over 12}{1\over r^7}-{10\over3}{1\over r^4}-{29 \over 6}{1\over r}+4r^2-{3\over4}r^5-2({1 \over x^5}-{2 \over x^2})\sqrt{3r^2+{2 \over r}-3^{4\over3}}-(2+{2 \over r^3})(3r^2+{2 \over r}-3^{4\over3})$$
This proves the first statement. We can see that ${11 \over 12}{1\over r^7}-{10\over3}{1\over r^4}-{29 \over 6}{1\over r}+4r^2-{3\over4}r^5-2({1 \over x^5}-{2 \over x^2})\sqrt{3r^2+{2 \over r}-3^{4\over3}}-(2+{2 \over r^3})(3r^2+{2 \over r}-3^{4\over3})>0$ for $r \in (0, 0.54)$ from its graph in Figure \ref{fig 5}. Therefore we have proven Proposition \ref{prop 4.4}.
\end{proof}

\begin{figure}
\centering
\includegraphics[]{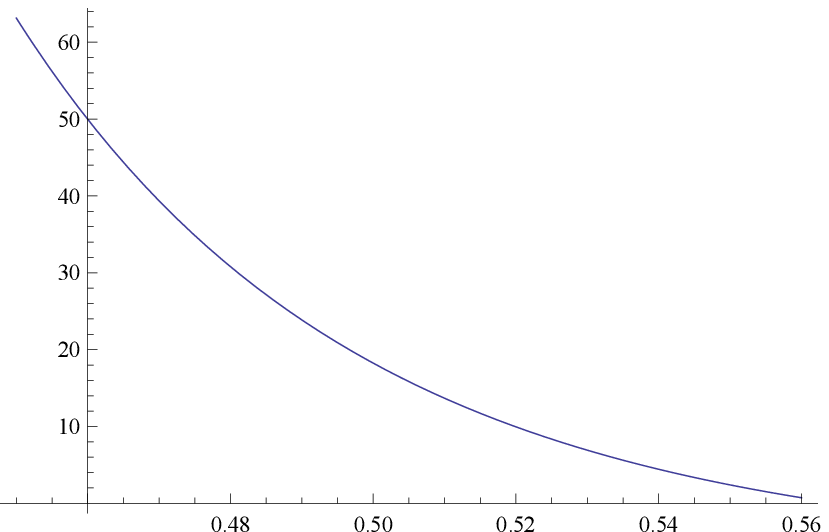}
\caption{Graph of $f_5(x)={11 \over 12}{1\over x^7}-{10\over3}{1\over x^4}-{29 \over 6}{1\over x}+4x^2-{3\over4}x^5
-2({1 \over x^5}-{2 \over x^2})\sqrt{3x^2+{2 \over x}-3^{4\over3}}-(2+{2 \over x^3})(3x^2+{2 \over x}-3^{4\over3})$ shows that it is positive on [0, 0.54].}
\label{fig 5}
\end{figure}

We have finished the proof of \hyperlink{step 1}{$\bold{Step \ \ 1}$}. Now we have to prove \hyperlink{step 2}{$\bold{Step \ \ 2, 3}$}. By the symmetry argument in Remark \ref{Rem 3.1}, we will see only the first quadrant of $\mathfrak{R} \backslash B_{0.54}(0)$. The first quadrant of the region for \hyperlink{step 2}{$\bold{Step \ \ 2, 3}$} is shown in Figure \ref{fig 6}. Therefore, we will assume that $q_1, q_2 \ge 0$, equivalently $0 \le \theta \le {\pi \over 2}$ in the polar coordinate, in the rest of this paper.

\begin{figure}
\centering
\includegraphics[]{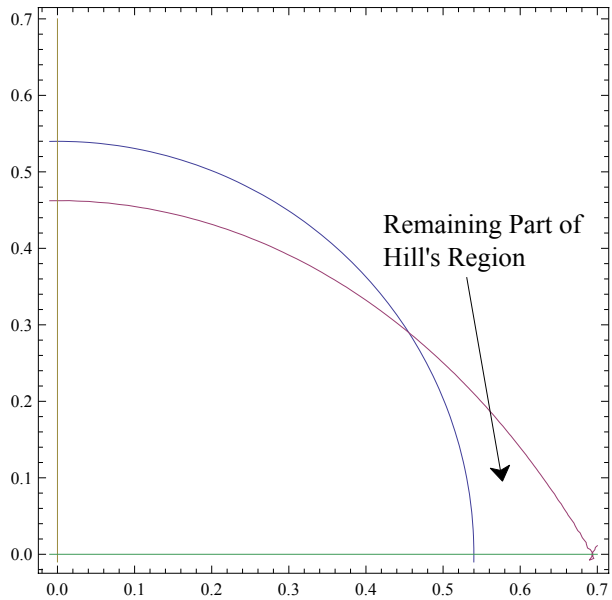}
\caption{The remaining Hill's region that we have to show on}
\label{fig 6}
\end{figure}

We consider the inequality 
$$0 \le |s|^2=3q_1^2+{2 \over |q|}-2c=3r^2 \cos^2 \theta +{2 \over r}-2c < 3r^2 \cos^2 \theta +{2 \over r}-2c_0$$
for the Hill's region. This implies that the boundary of the Hill's region satisfies the equation
$$3r^2 \cos^2 \theta +{2 \over r}=3^{4 \over 3} \iff \cos^2 \theta ={{3^{4 \over 3}-{2 \over r}}\over 3r^2}$$
in the polar coordinates. This gives the parametrization of the boundary. Since polar equations $r=0.54$ and $\cos^2 \theta ={{3^{4 \over 3}-{2 \over r}}\over 3r^2}$ intersect at $\cos^2 \theta ={{3^{4 \over 3}-{2 \over 0.54}}\over 3(0.54)^2}>0.7$, we may assume $\cos^2 \theta >0.7 $ in the remaining region. We can interpret this problem as an inequality problem of two dimensional variable $s$, if we fix the variable $q$. All possible $s$ forms a disk for a fixed $q$. The following Proposition allows us to reduce the domain of $s$ that we have to consider for the minimum value.

\begin{prop 4.5.}
The following equality holds for every $q \in \mathfrak{R}^{+} \backslash B_{0.54}(0)$.
$$\min_{|s|^2 \le 3q_1^2+{2 \over |q|}-3^{4/3}} (w(q)+s)^t \mathcal{H}(q) (w(q)+s)=\min_{\alpha \in [\theta, \theta+{\pi \over 2}]} (w(q)+s_{q, \alpha})^t \mathcal{H}(q) (w(q)+s_{q, \alpha})$$
where $s_{q, \alpha}=\sqrt{3q_1^2+{2 \over |q|}-3^{4/3}}\begin{pmatrix} \cos \alpha \\ \sin \alpha \end{pmatrix}$ is a point of $\partial B_{\sqrt{3q_1^2+{2 \over |q|}-3^{4 \over 3}}}(0)$ and $\theta$ is the angle of $q$ in polar coordinates.
\end{prop 4.5.}

\begin{proof} [Proof of Proposition \ref{prop 4.5}.]
As we mentioned before, we will show Lemma \ref{Lem 4.6}, \ref{Lem 4.7} and \ref{Lem 4.8}. Recall the function $F_q:D_q \rightarrow \mathbb{R}$ defined by $F_q(s)=(w(q)+s)^t \mathcal{H}(q) (w(q)+s)$ for each fixed $q$ where $D_q=B_{\sqrt{3q_1^2+{2 \over |q|}-3^{4 \over 3}}}(0)$.

\begin{Lem 4.6.}
The function $F_q : D_q \rightarrow \mathbb{R}$ has no local minimum in $int(D_q)$ for all $q \in \mathfrak{R}^{+} \backslash B_{0.54}(0)$.
\end{Lem 4.6.}

\begin{proof} [Proof of Lemma \ref{Lem 4.6}.]
For a fixed $q \in \mathfrak{R}^{+} \backslash B_{0.54}(0)$, $F_q$ is a quadratic function in variable $s$. Thus we get the Hessian $Hess F_q(s)=\mathcal{H}(q)$ of $F_q$ and we proved that $\mathcal{H}(q)$ has one positive eigenvalue and one negative eigenvalue in the \hyperlink{pf of prop 4.4}{proof of Proposition 4.4}. This implies that there is no local minimum and no local maximum in the interior of the range. This proves Lemma \ref{Lem 4.6}.
\end{proof}

As a result of Lemma \ref{Lem 4.6}, $F_q$ attains its minimum at the boundary of $D_q$. We define $s_{q, \alpha}=\sqrt{3r^2 \cos^2 \theta+{2 \over r}-3^{4 \over 3}}u_{\alpha}$ where $u_{\alpha}=\begin{pmatrix}\cos \alpha \\ \sin \alpha \end{pmatrix}$ for a fixed $\alpha$. Then Lemma \ref{Lem 4.6} implies that
$$\min_{|s|^2 \le 3q_1^2+{2 \over |q|}-3^{4/3}} (w+s)^t \mathcal{H} (w+s)=\min_{\alpha \in [0, 2\pi)} (w+s_{q, \alpha})^t \mathcal{H} (w+s_{q, \alpha}).$$
For convenience of computation, we will consider the translation of $\alpha$ by $\theta$ where $(q_1, q_2)=(r \cos \theta, r \sin \theta)$. Recall the function 
$$f_q : S^1 \rightarrow \mathbb{R}, \quad f_q(\alpha):=F_q|_{D_q}(\theta+\alpha)=(w+s_{q, \theta+\alpha})^t \mathcal{H} (w+s_{q, \theta+\alpha})$$
defined by restriction and translation. We have to prove the following statement 
$$\min_{\alpha \in [0, 2\pi)} f_q(\alpha)=\min_{\alpha \in [0, {\pi \over 2}]} f_q(\alpha)$$
in order to prove Proposition \ref{prop 4.5}.
We need the following Lemma.

\begin{Lem 4.7.}
There exists a unique local minimum and a unique local maximum of the restricted function $F_q|_{\partial D_q}: S^1 \rightarrow \mathbb{R}$ for each $q \in \mathfrak{R}^{+} \backslash B_{0.54}(0)$.
\end{Lem 4.7.}

\begin{proof} [Proof of Lemma \ref{Lem 4.7}] \hypertarget{pf of Lem 4.7}
We have the following expression for the function $f_q: S^1 \rightarrow \mathbb{R}$.
\begin{eqnarray*}
f_q(\alpha)&=&(w+s_{\theta+\alpha})^t \mathcal{H}(w+s_{\theta+\alpha})
\\ &=&w^t \mathcal{H}w+2\sqrt{2c-2c_0}w^t \mathcal{H}u_{\theta+\alpha}+(2c-2c_0)u_{\theta+\alpha}^t  \mathcal{H}u_{\theta+\alpha}
\\ &=&w^t \mathcal{H}w+2\sqrt{2c-2c_0}(\cos \alpha (3r-{9 \over r^2})\cos \theta \sin \theta+ \sin \alpha (-{1 \over r^5}+{2c \over r}))
\\ & &+(2c-2c_0)(\cos^2 \alpha (1-{2c \over r^2})+\sin^2 \alpha (-{1 \over r^3}+{2c \over r^2}-2)+2 \cos \alpha \sin \alpha(3 \cos \theta \sin \theta))
\end{eqnarray*}
We differentiate $f_q$ with respect to $\alpha$. Then we have
\begin{eqnarray*}
{df_q \over d\alpha}(\alpha)&=&{\partial \over \partial \alpha}((w+s_{\theta+\alpha})^t \mathcal{H}(w+s_{\theta+\alpha}))
\\ &=&2\sqrt{2c-2c_0}(-\sin \alpha (3r-{9 \over r^2})\cos \theta \sin \theta+ \cos \alpha (-{1 \over r^5}+{2c \over r}))
\\ & &+(2c-2c_0)(-2\cos \alpha \sin \alpha (1-{2c \over r^2})+2 \cos \alpha \sin \alpha (-{1 \over r^3}+{2c \over r^2}-2)
\\ & &+2 (\cos^2 \alpha- \sin^2 \alpha)(3 \cos \theta \sin \theta)
\\ &=&2\sqrt{2c-2c_0}(({9 \over r^2}-3r)\cos \theta \sin \theta)\sin \alpha + (-{1 \over r^5}+{2c \over r})\cos \alpha )
\\ & &+(2c-2c_0)((-{1 \over r^3}+{4c \over r^2}-3) \sin 2\alpha+(3 \cos \theta \sin \theta) \cos 2\alpha)
\\ &=:& A_1 \sin 2\alpha+A_2 \cos 2\alpha+B_1 \sin \alpha+B_2 \cos \alpha
\end{eqnarray*}

Claim 1 : $|B_1| \ge 2|A_2|$
\begin{proof} [Proof of Claim 1]
\begin{eqnarray*}
& &|B_1| \ge 2|A_2|
\\ &\iff& 2\sqrt{2c-2c_0}({9 \over r^2}-3r)\cos \theta \sin \theta \ge (2c-2c_0)(6 \cos \theta \sin \theta)
\\ &\iff& ({9 \over r^2}-3r-3\sqrt{2c-2c_0}) \ge 0
\end{eqnarray*}
This follows from the fact
\\ $r \in (0.54, 3^{-1\over 3})$ and $2c-2c_0 \le 3(0.54)^2+{2 \over 0.54}-3^{4 \over 3}<0.3.$ This proves Claim 1.
\end{proof}

Claim 2 : $|B_2| > 2 |A_1|$
\begin{proof} [Proof of Claim 2]
\begin{eqnarray*}
& &|B_2| \ge 2|A_1|
\\ &\iff& (-{1 \over r^5}+{2c \over r})^2-(2c-2c_0)(-{1 \over r^3}+{4c \over r^2}-3)^2>0
\\ &\iff& (-{1 \over r^5}-{2 \over r^2}-3r \cos^2 \theta)^2-(3r^2 \cos^2 \theta+{2 \over r}-3^{4 \over 3})(6 \cos^2 \theta+{3 \over r^3}-3)^2>0
\end{eqnarray*}
We define $\cos^2 \theta=:y$ and $g(r,y):=({1 \over r^5}-{2 \over r^2}-3ry)^2-(3r^2 y+{2 \over r}-3^{4 \over 3})(6y+{3 \over r^3}-3)^2$, then ${\partial g \over \partial y}=2({1\over r^5}-{2 \over r^2}-3ry)(-3r)-3r^2(6y+{3 \over r^3}-3)^2-6(3r^2 y+{2 \over r}-3^{4 \over 3})(6y+{3 \over r^3}-3)<0$. We can easily check three terms are all negative, and therefore it is enough to show that $g(r, 1)>0$,
that is, $({1 \over r^5}-{2 \over r^2 }-3r)^2-(3r^2+{2 \over r}-3^{4 \over 3})(3+{3 \over r^3})^2>0$. This is clear from a simple calculation. This proves Claim 2.
\end{proof}

Now we know $2\sqrt{A_1^2 + A_2^2}<\sqrt{B_1^2 + B_2^2}$ from Claim 1, 2. We need the following Lemma to get the number of local extrema. I borrow the following geometric proof of Lemma \ref{Lem 5.1} from Urs Frauenfelder. This Lemma can be proven with analytic way as well.

\begin{Lem}
\label{Lem 5.1}
If $A, B \in \mathbb{R}$ satisfy $2|A|<|B|$, then the equation for the unknown $\alpha$
$$A \sin(2 \alpha+\phi)+B\sin (\alpha+\mu)=0$$
has exactly 2 solutions on $[0, 2\pi)$ for any constant $\phi, \mu \in \mathbb{R}$.
\end{Lem}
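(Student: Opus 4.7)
The plan is to reinterpret the equation as a condition on a complex-valued curve and exploit the fact that the hypothesis $2|A|<|B|$ forces this curve to have a strictly monotone argument. Set
$$z(\alpha) := A e^{i(2\alpha+\phi)} + B e^{i(\alpha+\mu)},$$
so that $A\sin(2\alpha+\phi) + B\sin(\alpha+\mu) = \operatorname{Im} z(\alpha)$. The solutions of the equation are precisely the $\alpha\in[0,2\pi)$ at which $z(\alpha)$ lies on the real axis. My strategy is to show that $\alpha\mapsto \arg z(\alpha)$ is a well-defined, strictly monotone, degree-one map $S^1\to S^1$; it then hits each of $0$ and $\pi$ exactly once, producing exactly two solutions.

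First, I would verify that $z$ avoids the origin. Factoring $z(\alpha)=e^{i(\alpha+\mu)}\bigl(B+Ae^{i\xi}\bigr)$ with $\xi:=\alpha+\phi-\mu$, one gets $|z(\alpha)|^2 = B^2+A^2+2AB\cos\xi \ge (|B|-|A|)^2>0$, so $\arg z$ is well-defined and smooth in $\alpha$.

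Second, for strict monotonicity I would use the standard identity $\frac{d}{d\alpha}\arg z = \operatorname{Im}(\dot z\,\overline{z})/|z|^2$. A short expansion with $\dot z = iBe^{i(\alpha+\mu)}+2iAe^{i(2\alpha+\phi)}$ yields $\operatorname{Im}(\dot z\,\overline{z}) = B^2+2A^2+3AB\cos\xi$. The crucial algebraic inequality
$$B^2+2A^2-3|AB| \;=\; (|B|-|A|)\,(|B|-2|A|) \;>\;0$$
is precisely where the hypothesis $2|A|<|B|$ enters, so the derivative of $\arg z$ is strictly positive everywhere.

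Finally, for the total winding I would appeal to the linear homotopy $z_t(\alpha) = tAe^{i(2\alpha+\phi)}+Be^{i(\alpha+\mu)}$, $t\in[0,1]$, which misses the origin since $|z_t|\ge |B|-t|A|>0$, and hence preserves the winding number about $0$. At $t=0$ this winding number equals $1$, so the winding of $z=z_1$ is also $1$. Combined with the strict monotonicity from the previous step, $\arg z(\alpha)$ increases continuously over $[0,2\pi)$ by exactly $2\pi$, crossing each of $0$ and $\pi$ (mod $2\pi$) exactly once, giving the two required solutions. I do not anticipate a genuine obstacle; the only substantive point is the factorization $B^2+2A^2-3|AB|=(|B|-|A|)(|B|-2|A|)$, which simultaneously drives both the monotonicity and the nonvanishing of $z$, and shows that the bound $2|A|<|B|$ is sharp for this argument.
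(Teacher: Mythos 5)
Your argument is correct, and it proceeds along a genuinely different path from the one the paper gives. The paper normalizes to $B=1$, $A=t\in[0,\tfrac12)$, $\mu=0$, forms the family $T(\alpha,t)=t\sin(2\alpha+\phi)+\sin\alpha$ on $S^1\times[0,t_0]$, and argues by a parametric-transversality (``birth--death'') contradiction: if the number of zeros on $S^1\times\{t_0\}$ differed from the two at $t=0$, there would be a point with $T=0$ and $\partial_\alpha T=0$ simultaneously, and this is ruled out by the computation $1=t^2\bigl(1+3\cos^2(2\alpha+\phi)\bigr)\le 4t^2<1$. Your proof instead reads the equation as $\operatorname{Im}z(\alpha)=0$ for the non-vanishing curve $z(\alpha)=Ae^{i(2\alpha+\phi)}+Be^{i(\alpha+\mu)}$, proves that $\arg z$ is strictly increasing from $\operatorname{Im}(\dot z\,\overline z)=B^2+2A^2+3AB\cos\xi\ge(|B|-|A|)(|B|-2|A|)>0$, and fixes the winding number at $1$ by a straight-line homotopy to $Be^{i(\alpha+\mu)}$. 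The two routes are in fact closely linked: at a zero one has $\partial_\alpha T = z(\alpha)\cdot\tfrac{d}{d\alpha}\arg z$ with $z(\alpha)$ real and nonzero, so the paper's non-degeneracy at zeros is exactly the non-vanishing of $\tfrac{d}{d\alpha}\arg z$ at zeros, and both hinge on the hypothesis $2|A|<|B|$ in slightly different algebraic disguises. What your version buys is a direct, quantified conclusion (global strict monotonicity of $\arg z$, hence simple zeros, and winding exactly one) that needs no transversality scaffolding and no normalization; the paper's version is shorter once the parametric framework is accepted, but the ``birth--death'' step as stated is heuristic and really rests on the same non-degeneracy computation that you establish globally.
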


\begin{proof}
Without loss of generality, we may assume that $B=1, A=t \in [0, {1 \over 2})$ and $\mu=0$.
\\ In the case of $t=0$, the above equation becomes $\sin \alpha=0$ and this has 2 solutions.
\\ Suppose that  there exist $t_0 \in [0, {1 \over 2})$ such that $t_0 \sin(2\alpha + \phi)+\sin \alpha=0$ does not have 2 solutions. We define a function
$$T: S^1 \times [0, t_0] \rightarrow \mathbb{R}, \quad T(\alpha, t)=t\sin(2\alpha+\phi)+\sin \alpha.$$
Then a critical point $(\alpha, t)$ of $T$ satisfies
$$\partial_t T=\sin(2\alpha+\phi)=0, \quad \partial_\alpha T=2t \cos(2\alpha+\phi)+\cos \alpha=0.$$
and this implies the equations
$$\sin(2\alpha+\phi)=0, \cos \alpha=\pm 2t.$$
Since $0 \le 2t<1$ these two equations are not compatible with the equation $t\sin(2\alpha+\phi)+\sin \alpha=0$. Thus $0$ is the regular value for $T$. Then we get $T^{-1}(0)$ is a smooth manifold with boundary. Because it has a different number of points in $S^1 \times \{0\}$ and $S^1 \times \{t_0\}$ by the assumption of $t_0$. There must be an appearance or disappearance of curve, so-called, 'birth and death' of curve. Let $(\alpha_1, t_1)$ be one of these points. Then $T(\alpha_1, t_1)=0$ and $\partial_{\alpha} T(\alpha_1, t_1)=0$, that is, we have
$$\begin{cases} t_1\sin(2\alpha_1+\phi)+\sin \alpha_1=0
\\ 2t_1\cos(2\alpha_1+\phi)+\cos \alpha_1=0
\end{cases}
\\ \implies
\begin{cases} 
t_1^2 \sin^2(2\alpha_1+\phi)=\sin^2 \alpha_1
\\ 4t_1^2 \cos^2(2\alpha_1+\phi)=\cos^2 \alpha_1
\end{cases}
$$
By adding these two equations, we get $1=t_1^2+3t_1^2 \cos^2(2\alpha_1+\phi) \le 4t_1^2<1$ and this gives a contradiction. Thus we have proven Lemma \ref{Lem 5.1}. 
\end{proof}

We continue the \hyperlink{pf of Lem 4.7}{proof of Lemma 4.7}. We have defined ${\partial \over \partial\alpha}[(w+s_{\theta+\alpha})^t \mathcal{H}(w+s_{\theta+\alpha})]=A_1\sin 2 \alpha+A_2 \cos 2 \alpha + B_1 \sin \alpha +B_2 \cos \alpha=A \sin(2\alpha+ \phi)+B \sin(\alpha+\mu)$ where $A=\sqrt{A_1^2+A_2^2}, B=\sqrt{B_1^2+B_2^2}$. We proved $2|A|<|B|$ by Claim 1, 2. Thus we get that ${\partial \over \partial\alpha}[(w+s_{\theta+\alpha})^t \mathcal{H}(w+s_{\theta+\alpha})]=0$ has exactly 2 solutions on $\alpha \in [0, 2\pi)$ by applying Lemma \ref{Lem 5.1}. This implies $f_q(\alpha)=(w+s_{\theta+\alpha})^t \mathcal{H}(w+s_{\theta+\alpha})$ has exactly two critical points. Since the domain of $f_q$ is $S^1$, there exist the unique local maximum and minimum respectively on $S^1$. This proves Lemma \ref{Lem 4.7}. 
\end{proof}

Now we need the following Lemma to reduce the region where the minimum is attained. The following Lemma will finish the proof of Proposition \ref{prop 4.5}.

\begin{Lem 4.8.}
The unique minimum of the function $f_q: S^1=\mathbb{R}/2\pi \mathbb{Z} \rightarrow \mathbb{R}$ is attained in $[0, {\pi \over 2}]$.
\end{Lem 4.8.}

\begin{proof}
Now we know $f_q$ has only one local minimum for fixed $q$ and so this will be the global minimum. We calculate the first derivative of $f_q$ 
\begin{eqnarray*}
{d \over df_q}(0)&=&{\partial \over \partial \alpha}\big|_{\alpha=0}(w^t \mathcal{H}w+2\sqrt{2c-2c_0}w^t \mathcal{H}u_{\theta+\alpha}+(2c-2c_0)u_{\theta+\alpha}^t  \mathcal{H}u_{\theta+\alpha})
\\ &=& 2\sqrt{2c-2c_0}(-{1 \over r^5}+{2c \over r})+(2c-2c_0)(6 \sin \theta \cos \theta))
\end{eqnarray*}
at $\alpha=0, {\pi \over 2}$. We can obtain ${d \over df_q}(0)<0$ because of the inequality ${1 \over r^5}-{2c \over r}>\sqrt{2c-2c_0}(6 \cos^2 \theta+{3 \over r^3}-3)$ from Claim 2 in the \hyperlink{Lem 4.7}{proof of Lemma 4.7} and the inequality $6\cos^2 \theta+{3 \over r^3}-3>3\sin \theta \cos \theta$.

Next, we compute the derivative of $f_q$ at ${\pi \over 2}$
\begin{eqnarray*}
{d \over df_q}({\pi \over 2})&=&{\partial \over \partial \alpha}\big|_{\alpha={\pi \over 2}}(w^t \mathcal{H}w+2\sqrt{2c-2c_0}w^t \mathcal{H}u_{\theta+\alpha}+(2c-2c_0)u_{\theta+\alpha}^t  \mathcal{H}u_{\theta+\alpha})
\\ &=& 2\sqrt{2c-2c_0}({9 \over r^2}-3r)\cos \theta \sin \theta+(2c-2c_0)(-6 \sin \theta \cos \theta))
\\ &=& 2\sqrt{2c-2c_0}\cos \theta \sin \theta({9 \over r^2}-3r-3\sqrt{2c-2c_0})
\end{eqnarray*}
and similarly we can obtain ${d \over df_q}({\pi \over 2})>0$. Therefore, there exists a unique local minimum on $\alpha \in (0, {\pi \over 2}]$ and this is the global minimum because the function $f_q:S^1 \rightarrow \mathbb{R}$ has only one local minimum. This proves Lemma \ref{Lem 4.8}.
\end{proof}

Now we can prove Proposition \ref{prop 4.5} by combining Lemma \ref{Lem 4.6}, \ref{Lem 4.7} and \ref{Lem 4.8}. We know that $F_q$ attains its minimum on the boundary of $D_q$ for any fixed $q \in \mathfrak{R}^{+} \backslash B_{0.54}(0)$ by Lemma \ref{Lem 4.6}. Moreover, we know $f_q$, the restriction of $F_q$ to $\partial D_q$ with the translation of angle by $\theta$, has only one local minimum and so it is global minimum and this minimum is attained in $[0, {\pi \over 2}]$ by Lemma \ref{Lem 4.7} and \ref{Lem 4.8}. Therefore, we get $\min_{|s|^2 \le 3q_1^2+{2 \over |q|}-3^{4/3}} (w(q)+s)^t \mathcal{H}(q) (w(q)+s)=\min_{\alpha \in [0, {\pi \over 2}]} (w(q)+s_{q, \theta+\alpha})^t \mathcal{H}(q) (w(q)+s_{q, \theta+\alpha})$ for all $q \in \mathfrak{R}^{+} \backslash B_{0.54}(0)$. This completes the proof of Proposition \ref{prop 4.5}.
\end{proof}

We will use the previous notations again. We recall that $f_q(\alpha):=(w+s_{\theta+\alpha})^t \mathcal{H}(w+s_{\theta+\alpha})$ for each fixed $q \in \mathfrak{R}^{+} \backslash B_{0.54}(0)$ where $s_{q, \alpha}=\sqrt{3r^2 \cos^2 \theta+{2 \over r}-3^{4 \over 3}}\begin{pmatrix}\cos \alpha \\ \sin \alpha \end{pmatrix} \in \partial D_q$.

\begin{Lem 4.9.}
The function $f_q$ is convex on $[0, {\pi \over 2}]$ for each $q \in \mathfrak{R}^{+} \backslash B_{0.54}(0)$.
\end{Lem 4.9.}

\begin{proof}
We calculate the second derivative
\begin{eqnarray*}
{\partial^2 \over \partial \alpha^2}f_q(\alpha)&=&{\partial^2 \over \partial \alpha^2}(w+s_{\theta+\alpha})^t \mathcal{H}(w+s_{\theta+\alpha})
\\ &=& 2\sqrt{2c-2c_0}[(({9 \over r^2}-3r) \cos \theta \sin \theta)\cos \alpha+({1 \over r^5}-{2c \over r})\sin \alpha]
\\ & &+(2c-2c_0)[2(-{1 \over r^3}+{4c \over r^2}-3) \cos 2 \alpha + 2(-6 \sin \theta \cos \theta) \sin 2\alpha]
\end{eqnarray*}
and we want to prove that it is positive  on $\alpha \in [0, {\pi \over 2}]$.

Claim 1: $({1 \over r^5}-{2c \over r})\sin \alpha+\sqrt{2c-2c_0}(-{1 \over r^3}+{4c \over r^2}-3) \cos 2 \alpha>0$ for $\alpha \in [0, {\pi \over 2}]$.
\begin{proof} [Proof of Claim 1]
We already know that $({1 \over r^5}-{2c \over r})>\sqrt{2c-2c_0}(-{1 \over r^3}+{4c \over r^2}-3)>0$. On the other hand, we have the following inequality. If $b>a>0$, then
$$a \cos 2 \alpha+b \sin \alpha>0$$
for all $\alpha \in [0, {\pi \over 2}]$. In fact, we have that $a \cos2\alpha+b \sin\alpha=-2a\sin^2\alpha+b\sin \alpha+a$ and $-2at^2+b+a>0$ for all $t \in [0, 1]$, if $b>a>0$.
Because $0 \le \sin \alpha \le 1$ on $\alpha \in [0, {\pi \over 2}]$. This proves Claim 1.
\end{proof}

Claim 2: $({9 \over r^2}-3r)\cos \theta \sin \theta \cos \alpha \ge \sqrt{2c-2c_0}(6\sin \theta \cos \theta) \sin2\alpha$ for $\alpha \in [0, {\pi \over 2}]$.
\begin{proof} [Proof of Claim 2]
It suffices to prove that $({9 \over r^2}-3r-12\sqrt{2c-2c_0} \sin \alpha)\cos \alpha \ge 0$ on $\alpha \in [0, {\pi \over 2}]$. This is clear, because we have that ${9 \over r^2}-3r^2>8 \cdot 3^{2 \over 3}$ and $2c-2c_0<0.3$ from the Claim 1 in the proof of Lemma \ref{Lem 4.7}. Thus, we have ${9 \over r^2}-3r^2>12\sqrt{2c-2c_0}$. This proves Claim 2
\end{proof}

We have shown that ${\partial^2 \over \partial \alpha^2}f_q(\alpha)>0$ for all $\alpha \in [0, {\pi \over 2}]$ by Claim 1, 2. This completes the proof of Lemma \ref{Lem 4.9}.
\end{proof}

We have proven that $\min_{|s| \le 3q_1^2+{2 \over |q|}-3^{4/3}} (w(q)+s)^t \mathcal{H}(q) (w(q)+s)=\min_{0 \le \alpha \le {\pi \over 2}}f_q(\alpha)$ and $f_q$ is convex on $[0, {\pi \over 2}]$ for all $q \in \mathfrak{R}^{+} \backslash B_{0.54}(0)$. Let $l_q(\alpha)=f_q^{`}({\pi \over 4})(\alpha-{\pi \over 4})+f_q({\pi \over 4})$ be the tangent line of $f_q$ at $\alpha={\pi \over 4}$ then this tangent line will be below the function. In particular, one of the end points of this line will be less than or equal to the minimum value of the function, see Figure \ref{fig 7}.  Thus we have that
$$\min_{|s| \le 3q_1^2+{2 \over |q|}-3^{4/3}} (w(q)+s)^t \mathcal{H}(q) (w(q)+s)=\min_{0 \le \alpha \le {\pi \over 2}} f_q(\alpha) \ge \min \{ l_q({\pi \over 4}+1), l_q({\pi \over 4}-1) \}.$$
Therefore we shall show that $\min_{|s| \le 3q_1^2+{2 \over |q|}-3^{4/3}} (w(q)+s)^t \mathcal{H}(q) (w(q)+s)>0$ by proving $l_q({\pi \over 4}+1)>0$ and $l_q({\pi \over 4}-1)>0$ in Proposition \ref{prop 4.10} and \ref{prop 4.11}, respectively.

\begin{figure}
\centering
\includegraphics[]{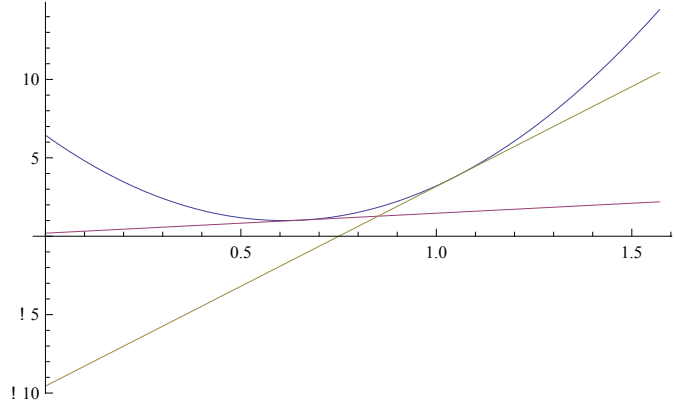}
\caption{Tangent lines of a convex function - The strategy is "One of the end points of a tangent line is below the minimum point of convex function."}
\label{fig 7}
\end{figure}

We need some new notations to prove Proposition \ref{prop 4.10} and \ref{prop 4.11}. From now on, we use the following coordinates and variables. We introduce new coordinates
$$x:=r, \quad y:=\cos^2 \theta$$
which are well-defined coordinates on the first quadrant of $(q_1, q_2)$-coordinate. The domain $\mathfrak{R}'$ of $(x, y)$ corresponding to the domain $\mathfrak{R}^{+} \backslash B_{0.54}(0)$ of $(q_1, q_2)$ is given by 
$$\mathfrak{R}':=\{(x, y) \in \mathbb{R}^2|0.54<x<3^{-1\over 3}, {{3^{4 \over 3}-{2 \over x}} \over 3x^2}<y \le 1 \}.$$
We will define a change of variables in terms of $x, y$ in the following Lemma.

\begin{Lem} [Blow-up coordinates change]
\label{Lem 5.2}
If we define the map $\phi : \mathfrak{R}'':=(0.54, 3^{-1 \over 3})\times[0, 1) \rightarrow \mathfrak{R}'$ by $(x, k) \rightarrow (x, y)$ where
$$y={1+3k(3^{1\over3}x-1) \over 1+k(3x^3-1)},$$
then $\phi$ is a diffeomorphism.
\end{Lem}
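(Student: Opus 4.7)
The plan is to prove $\phi$ is a diffeomorphism by exhibiting the inverse explicitly and then verifying (i) $\phi$ maps $\mathfrak{R}''$ into $\mathfrak{R}'$, (ii) $\phi$ is smooth, and (iii) the inverse is smooth and maps $\mathfrak{R}'$ back into $\mathfrak{R}''$. Since the map $\phi(x,k)=(x,y)$ fixes the first coordinate, the whole question reduces to analyzing, for each fixed $x\in(0.54,3^{-1/3})$, the one-variable map $k\mapsto y(x,k)$.

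The first step is to check that the denominator $1+k(3x^3-1)$ appearing in the formula for $y$ never vanishes on $\mathfrak{R}''$. Since $x<3^{-1/3}$ we have $3x^3-1<0$, so for $k\in[0,1)$ the denominator lies in $(3x^3,1]\subset(0,1]$. Hence $\phi$ is smooth on $\mathfrak{R}''$. Next I would compute $\partial y/\partial k$ at fixed $x$; after clearing the square in the denominator, the numerator collapses dramatically to $-(3x^3-3^{4/3}x+2)$. I would then prove the key elementary inequality
$$p(x):=3x^3-3^{4/3}x+2>0\qquad\text{for all }x\in(0,3^{-1/3}),$$
by noting that $p'(x)=9x^2-3^{4/3}$ has its only positive zero at $x=3^{-1/3}$, and $p(3^{-1/3})=1-3+2=0$; thus $p$ is strictly decreasing on $(0,3^{-1/3})$ and bounded below by $p(3^{-1/3})=0$, so in fact $p>0$ there. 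This gives $\partial y/\partial k<0$, so $y(x,\cdot)$ is strictly monotone in $k$ for each fixed $x$.

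Once monotonicity is in hand, the image of $\phi$ is pinned down by its boundary values: at $k=0$ we get $y=1$, and as $k\to 1^-$ we get $y\to\frac{1+3(3^{1/3}x-1)}{1+(3x^3-1)}=\frac{3^{4/3}x-2}{3x^3}=\frac{3^{4/3}-2/x}{3x^2}$. By continuity and strict monotonicity, $k\mapsto y(x,k)$ is a bijection from $[0,1)$ onto $\left(\frac{3^{4/3}-2/x}{3x^2},1\right]$, which is exactly the $x$-slice of $\mathfrak{R}'$. This simultaneously shows that $\phi$ lands in $\mathfrak{R}'$ and that it is bijective onto $\mathfrak{R}'$. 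Finally, solving the defining equation for $k$ algebraically yields the explicit inverse
$$\phi^{-1}(x,y)=\Bigl(x,\ \frac{1-y}{3x^3y-3^{4/3}x+3-y}\Bigr),$$
and the denominator here is nothing but $(1-y)/(\partial y/\partial k)^{-1}$ up to sign; combined with the nonvanishing of $\partial y/\partial k$ established above, this shows $\phi^{-1}$ is smooth (alternatively one may simply invoke the inverse function theorem, since the Jacobian of $\phi$ is lower-triangular with diagonal entries $1$ and $\partial y/\partial k\neq 0$).

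The main obstacle is really the algebraic identity that makes $\partial y/\partial k$ simplify to $-p(x)/(1+k(3x^3-1))^2$ and the subsequent verification that $p(x)>0$ on the relevant interval; both are elementary but are the crux of the argument, because they simultaneously deliver monotonicity, the correct image, and smoothness of the inverse. Everything else is either routine algebra (the inversion formula) or an immediate application of the intermediate value theorem.
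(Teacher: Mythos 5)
Your proposal is correct and follows essentially the same route as the paper: compute $\partial y/\partial k = \dfrac{-3x^3+3^{4/3}x-2}{(1+k(3x^3-1))^2}$, observe the sign is constant so $y$ is strictly monotone in $k$ (hence injective and, via the boundary values $y(x,0)=1$ and $y(x,1)=\frac{3^{4/3}x-2}{3x^3}$, surjective onto the $x$-slice of $\mathfrak{R}'$), and invoke the nonvanishing lower-triangular Jacobian for smoothness of the inverse. The only substantive addition you make is an explicit proof that $p(x)=3x^3-3^{4/3}x+2>0$ on $(0,3^{-1/3})$ via $p'(x)=9x^2-3^{4/3}$ and $p(3^{-1/3})=0$, which the paper leaves as an unproved sign claim; you also write out the inverse formula, which the paper records separately as $\Psi$.
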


\begin{proof}
We compute the Jacobian of $\phi$. First, we note that ${{\partial y} \over {\partial k}}$ is not zero. In fact,
\begin{eqnarray*}
{{\partial y} \over {\partial k}}&=& {\partial \over \partial k}({1+3k(3^{1\over3}x-1) \over 1+k(3x^3-1)})
\\ &=& {-3x^3+3^{4 \over 3}x-2 \over (1+k(3x^3-1))^2}<0 \textrm{ for any fixed } x \in (0.54, 3^{-1\over3})
\end{eqnarray*}
Then the Jacobian of this map is given by
$${{\partial (x, y)}\over {\partial (x, k)}}
=\begin{pmatrix} 1 & 0 
\\ * & {-3x^3+3^{4 \over 3}x-2 \over (1+k(3x^3-1))^2} 
\end{pmatrix}$$
Thus we know that the Jacobian is nonsingular for every $(x, k) \in (0.54, 3^{-1 \over 3}) \times [0,1)$ by the above computation. This proves that $\phi$ is a local diffeomorphism.
We need to show that $\phi$ is a bijective map. If we assume $\phi(x_1, k_1)=\phi(x_2, k_2)$, then we have $x_1=x_2$ and ${1+3k_1(3^{1\over3}x_1-1) \over 1+k_1(3x_1^3-1)}={1+3k_2(3^{1\over3}x_2-1) \over 1+k_2(3x_2^3-1)}$. We get $k_1=k_2$ from the monotonicity of $y$ with respect to $k$ and so $(x_1, k_1)=(x_2, k_2)$. This proves the injectivity of $\phi$. For the surjectivity, we extend the map $\phi$ to the map on $(0.54, 3^{-1 \over 3})\times [0, 1]$ in obvious way. Then we have that
$$k=0 \implies y=1, \quad k=1 \implies y={3^{4\over3}x-2 \over 3x^3}.$$
This proves the surjectivity from the monotonicity. Therefore the map $\phi$ is a diffeomorphism. This proves Lemma \ref{Lem 5.2}.
\end{proof}

This diffeomorphism $\phi : \mathfrak{R}'' \rightarrow \mathfrak{R}'$ cannot be extended to the boundary as a diffeomorphism. As one can see in Figure \ref{fig 8}, the critical point $(3^{-1 \over 3}, 1)$ at the boundary of $\mathfrak{R}'$ corresponds to the one side $x=3^{-1 \over 3}, 0 \le k \le 1$ of the boundary of $\mathfrak{R}''$. If we use this map $\phi$ as a coordinate chart, then we can handle our problem on a rectangle domain. This coordinate chart will play an important role in the proof of \hyperlink{step 3}{$\bold{Step \ \ 3}$} as well as Proposition \ref{prop 4.10} and \ref{prop 4.11}.

\begin{figure}
\centering
\includegraphics[]{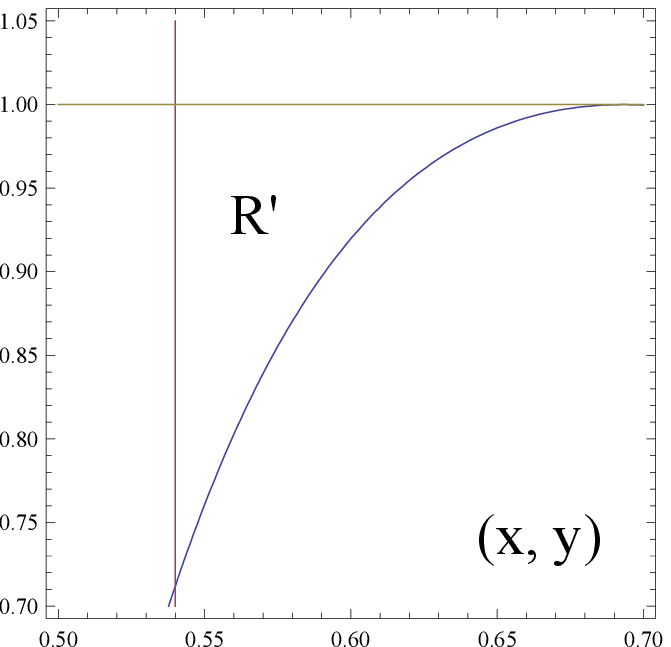}
\includegraphics[]{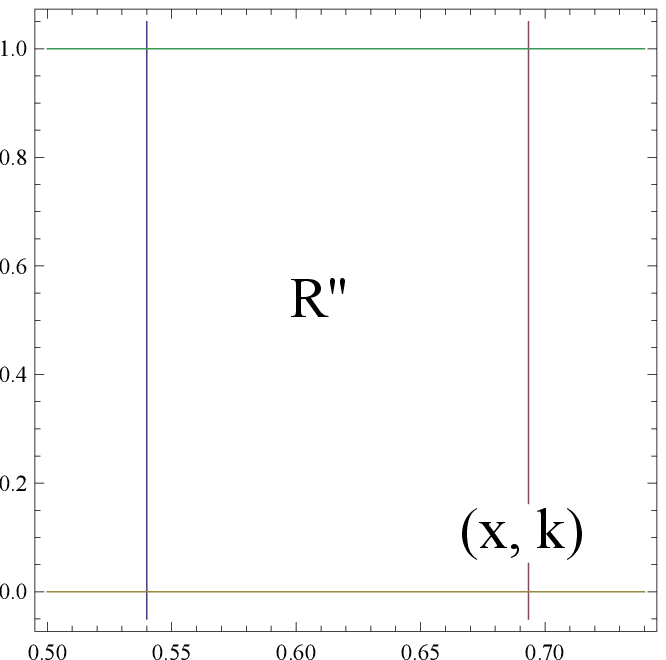}
\caption{The domains $\mathfrak{R}', \mathfrak{R}''$ of new variables}
\label{fig 8}
\end{figure}

We compute the evaluation at ${\pi \over 4}$
\begin{eqnarray*}
f_q({\pi \over 4})&=&w^t \mathcal{H}w+2\sqrt{2c-2c_0}({1 \over \sqrt{2}} (3r-{9 \over r^2})\cos \theta \sin \theta+ {1 \over \sqrt{2}} (-{1 \over r^5}+{2c \over r}))
\\ & &+(2c-2c_0)(-{1 \over 2}-{1 \over 2r^3}+3\sin \theta \cos \theta)
\\ f_q'({\pi \over 4})&=&2\sqrt{2c-2c_0}({1 \over \sqrt{2}}({9 \over r^2}-3r)\cos \theta \sin \theta)+{1 \over \sqrt{2}}(-{1 \over r^5}+{2c \over r}))+(2c-2c_0)(-{1 \over r^3}+{4c \over r^2}-3)
\end{eqnarray*}
of the functions $f_q$ and $f_q '$ to express the tangent line $l_q$ at ${\pi \over 4}$ in terms of $q$. For the tangent line
$$l_q(t)=f_q({\pi \over 4})+f_q'({\pi \over 4})(t-{\pi \over 4}),$$
of $f_q$ at ${\pi \over 4}$, we can express the values of $l_q$
$$l_q({\pi \over 4}+1)=w^t \mathcal{H}w+2\sqrt{2c-2c_0}(\sqrt{2}(-{1 \over r^5}+{2c \over r}))+(2c-2c_0)(-{7 \over 2}-{3 \over 2r^3}+{4c \over r^2}+3\sin\theta\cos\theta)$$
at ${\pi \over 4}+1$ in terms of $r, \theta, c$ explicitly.

\begin{prop 4.10.}
The inequality $l_q ({\pi \over 4}+1)>0$ holds for every $q \in \mathfrak{R}^{+} \backslash B_{0.54}(0)$.
\end{prop 4.10.}

\begin{proof} \hypertarget{pf of prop 4.10}
First, we note that we can express $w^t\mathcal{H}w$ in terms of $r, c$ using the equation $3r^2 \cos^2 \theta +{2 \over r}-2c=0 \iff \cos^2 \theta={{2cr-2} \over 3r^3}$ where $(r, \theta)$ is the polar coordinate system for $q$, namely $q_1=r \cos \theta, q_2=r \sin \theta$. 
\begin{eqnarray*}
& &w^t \mathcal{H}w
\\ &=&{1\over r^7}-{5 \over r^4} \cos^2 \theta-{2\over r^4} \sin^2 \theta +{3\over r} \cos^2 \theta-{27 \over r} \cos^2 \theta \sin^2 \theta +9r^2 \cos^2 \theta
\\ &=&{1 \over r^7}-{2 \over r^4}-{3 \over r^4}({2cr-2 \over 3r^3})+{3 \over r}({2cr-2 \over 3r^3})-{27 \over r}({2cr-2 \over 3r^3})(1-{2cr-2 \over 3r^3})+9r^2({2cr-2 \over 3r^3})
\\ &=&{1 \over r^7}-{2 \over r^4}-{3 \over r^4}({2c_0 r-2 \over 3r^3})+{3 \over r}({2c_0 r-2 \over 3r^3})-{27 \over r}({2c_0 r-2 \over 3r^3})(1-{2c_0 r-2 \over 3r^3})+9r^2({2c_0 r-2 \over 3r^3})
\\ &-&{3\over r^4}({2c-2c_0 \over 3r^2})+{3 \over r}({2c-2c_0 \over 3r^2})-27[({2c-2c_0 \over 3r^2})-(({2cr-2 \over 3r^3})^2-({2c_0 r-2 \over 3r^3})^2)]+9r^2({2c-2c_0 \over 3r^2})
\\ &=&{15 \over r^7}-{39\sqrt[3]{3} \over r^6}+{27\sqrt[3]{9} \over r^5}+{14 \over r^4}-{24\sqrt[3]{3} \over r^3}-{6 \over r}+9\sqrt[3]{3}
\\ & &+(2c-2c_0)(-{13 \over r^6}+{18\sqrt[3]{3} \over r^5}-{8 \over r^3}+3)+(2c-2c_0)^2({3 \over r^5})
\end{eqnarray*}

We have to see that  $w^t \mathcal{H}w+2\sqrt{2c-2c_0}(\sqrt{2}(-{1 \over r^5}+{2c \over r}))+(2c-2c_0)(-{7 \over 2}-{3 \over 2r^3}+{4c \over r^2})>0$

By inserting the last computation and using $c_0={3^{4 \over 3} \over 2}$, we get the following estimate.
\begin{eqnarray*}
& &w^t \mathcal{H}w+2\sqrt{2c-2c_0}(\sqrt{2}(-{1 \over r^5}+{2c \over r}))+(2c-2c_0)(-{7 \over 2}-{3 \over 2r^3}+{4c \over r^2})
\\ &=&{15 \over r^7}-{39\sqrt[3]{3} \over r^6}+{27\sqrt[3]{9} \over r^5}+{14 \over r^4}-{24\sqrt[3]{3} \over r^3}-{6 \over r}+9\sqrt[3]{3}
\\ & &+(2c-2c_0)(-{13 \over r^6}+{18\sqrt[3]{3} \over r^5}-{8 \over r^3}+3)+(2c-2c_0)^2({3 \over r^5})
\\ & &+2\sqrt{2c-2c_0}(\sqrt{2}(-{1\over r^5}+{3\sqrt[3]{3}\over r}))+2(2c-2c_0)^{3\over2}({\sqrt{2}\over r})
\\ & &+(2c-2c_0)(-{3 \over 2r^3}-{7 \over 2}+{6\sqrt[3]{3}\over r^2})+(2c-2c_0)^2 ({2 \over r^2})
\\ &=&{15 \over r^7}-{39\sqrt[3]{3} \over r^6}+{27\sqrt[3]{9} \over r^5}+{14 \over r^4}-{24\sqrt[3]{3} \over r^3}-{6 \over r}+9\sqrt[3]{3}+2\sqrt{2c-2c_0}(\sqrt{2}(-{1\over r^5}+{3\sqrt[3]{3}\over r}))
\\ & &+(2c-2c_0)(-{13 \over r^6}+{18\sqrt[3]{3} \over r^5}-{19 \over 2r^3}+{6\sqrt[3]{3} \over r^2}-{1 \over 2})
\\ & &+2(2c-2c_0)^{3\over2}({\sqrt{2}\over r})+(2c-2c_0)^2({3 \over r^5}+{2 \over r^2})
\\ &\ge&{15 \over r^7}-{39\sqrt[3]{3} \over r^6}+{27\sqrt[3]{9} \over r^5}+{14 \over r^4}-{24\sqrt[3]{3} \over r^3}-{6 \over r}+9\sqrt[3]{3}+2\sqrt{2c-2c_0}(\sqrt{2}(-{1\over r^5}+{3\sqrt[3]{3}\over r}))
\\ & &+(2c-2c_0)(-{13 \over r^6}+{18\sqrt[3]{3} \over r^5}-{19 \over 2r^3}+{6\sqrt[3]{3} \over r^2}-{1 \over 2})
\end{eqnarray*}
Therefore, it suffices to prove the following inequality
\begin{equation}
\label{ineq1 in prop 4.10}
\begin{split}
&  {15 \over r^7}-{39\sqrt[3]{3} \over r^6}+{27\sqrt[3]{9} \over r^5}+{14 \over r^4}-{24\sqrt[3]{3} \over r^3}-{6 \over r}+9\sqrt[3]{3}+2\sqrt{2c-2c_0}(\sqrt{2}(-{1\over r^5}+{3\sqrt[3]{3}\over r}))
\\  & +(2c-2c_0)(-{13 \over r^6}+{18\sqrt[3]{3} \over r^5}-{19 \over 2r^3}+{6\sqrt[3]{3} \over r^2}-{1 \over 2}) > 0
\end{split}
\end{equation}

We will use the variables $(x,k)$ in Lemma \ref{Lem 5.2} which have the relation of $x:=r, y:=\cos^2 \theta, y={1+3k(3^{1\over3}x-1) \over 1+k(3x^3-1)}$. Note that the following identities.
\begin{equation}
\label{id1 in prop 4.10}
\begin{split}
& 2c-2c_0=3x^2 y+{2 \over x}-3^{4 \over 3}
\\ & \quad = 3x^2 ({1+3k(3^{1\over3}x-1) \over 1+k(3x^3-1)})+{2 \over x}-3^{4 \over 3}={1-k \over 1+k(3x^3-1)}(3x^2+{2 \over x}-3^{4 \over 3})
\\ & \quad ={1-k \over 1+k(3x^3-1)}(3^{-1\over 3}-x)^2(3+{2 \cdot 3^{2\over3}\over x})
\end{split}
\end{equation}
Using the above identities, the inequality (\ref{ineq1 in prop 4.10}) can be written as follows.
\begin{eqnarray*}
& & {15 \over x^7}-{39\sqrt[3]{3} \over x^6}+{27\sqrt[3]{9} \over x^5}+{14 \over x^4}-{24\sqrt[3]{3} \over x^3}-{6 \over x}+9\sqrt[3]{3}
\\ & &+2\sqrt{{1-k \over 1+k(3x^3-1)}(3^{-1\over 3}-x)^2(3+{2 \cdot 3^{2\over3}\over x})}(\sqrt{2}(-{1\over x^5}+{3\sqrt[3]{3}\over x}))
\\ & &+({1-k \over 1+k(3x^3-1)}(3^{-1\over 3}-x)^2(3+{2 \cdot 3^{2\over3}\over x}))(-{13 \over x^6}+{18\sqrt[3]{3} \over x^5}-{19 \over 2x^3}+{6\sqrt[3]{3} \over x^2}-{1 \over 2}) > 0
\end{eqnarray*}
Since we have the following decompositions
\begin{equation}
\label{decom1 in prop 4.10}
{15 \over x^7}-{39\sqrt[3]{3} \over x^6}+{27\sqrt[3]{9} \over x^5}+{14 \over x^4}-{24\sqrt[3]{3} \over x^3}-{6 \over x}+9\sqrt[3]{3}=(3^{-1 \over 3}-x)^2 ({15 \cdot 3^{2\over3} \over x^7}-{27 \over x^6}-{18 \cdot 3^{1 \over 3} \over x^5}+{5 \cdot 3^{2 \over 3} \over x^4}+{12 \over x^3}+{9 \cdot 3^{1\over 3} \over x^2}),
\end{equation}
\begin{equation}
\label{decom2 in prop 4.10}
-{1\over x^5}+{3\sqrt[3]{3}\over x}=-(3^{-1 \over 3}-x)({3^{1\over3} \over x^5}+{3^{2 \over 3} \over x^4}+{3 \over x^3}+{3^{4\over3} \over x^2}),
\end{equation}
we can factor out the term $(3^{-1 \over 3}-x)^2$. Then inequality (\ref{ineq1 in prop 4.10}) is equivalent to the following inequality
\begin{equation}
\label{ineq2 in prop 4.10}
\begin{split}
& {15 \cdot 3^{2\over3} \over x^7}-{27 \over x^6}-{18 \cdot 3^{1 \over 3} \over x^5}+{5 \cdot 3^{2 \over 3} \over x^4}+{12 \over x^3}+{9 \cdot 3^{1\over 3} \over x^2}
\\ & -2\sqrt{2} \sqrt{{1-k \over 1+k(3x^3-1)}(3+{2 \cdot 3^{2\over3}\over x})}({3^{1\over3} \over x^5}+{3^{2 \over 3} \over x^4}+{3 \over x^3}+{3^{4\over3} \over x^2})
\\ & +({1-k \over 1+k(3x^3-1)}(3+{2 \cdot 3^{2\over3}\over x}))(-{13 \over x^6}+{18\sqrt[3]{3} \over x^5}-{19 \over 2x^3}+{6\sqrt[3]{3} \over x^2}-{1 \over 2}) > 0
\end{split}
\end{equation}
We will prove this inequality (\ref{ineq2 in prop 4.10}). We define a degree 2 polynomial
\begin{eqnarray*}
g_x(t)&:=& (-{13 \over x^6}+{18\cdot3^{1\over3} \over x^5}-{19 \over 2x^3}+{6\cdot3^{1\over3} \over x^2}-{1 \over 2})t^2-2\sqrt{2}({3^{1\over3} \over x^5}+{3^{2 \over 3} \over x^4}+{3 \over x^3}+{3^{4\over3} \over x^2})t
\\ & &+({15 \cdot 3^{2\over3} \over x^7}-{27 \over x^6}-{18 \cdot 3^{1 \over 3} \over x^5}+{5 \cdot 3^{2 \over 3} \over x^4}+{12 \over x^3}+{9 \cdot 3^{1\over 3} \over x^2}).
\end{eqnarray*}
in variable $t$. The coefficients of $g_x$ are functions of $x$. We note that $g_x(\sqrt{{1-k \over 1+k(3x^3-1)}(3+{2 \cdot 3^{2\over3}\over x})})$ is the left hand side of inequality (\ref{ineq2 in prop 4.10}) which we have to show. Thus we want to prove that the inequality
$$g_x(\sqrt{{1-k \over 1+k(3x^3-1)}(3+{2 \cdot 3^{2\over3}\over x})})>0$$
for all $(x,k) \in (0.54, 3^{-1 \over 3}) \times [0, 1)$. We calculate the discriminant $D_x$ of the polynomial $g_x$.
\begin{eqnarray*}
{D_x \over 4}&=& 2({3^{1\over3} \over x^5}+{3^{2 \over 3} \over x^4}+{3 \over x^3}+{3^{4\over3} \over x^2})^2
\\ & &-(-{13 \over x^6}+{18\cdot 3^{1\over3} \over x^5}-{19 \over 2x^3}+{6\cdot 3^{1\over3} \over x^2}-{1 \over 2})({15 \cdot 3^{2\over3} \over x^7}-{27 \over x^6}-{18 \cdot 3^{1 \over 3} \over x^5}+{5 \cdot 3^{2 \over 3} \over x^4}+{12 \over x^3}+{9 \cdot 3^{1\over 3} \over x^2})
\end{eqnarray*}

\begin{figure}
\centering
\includegraphics[]{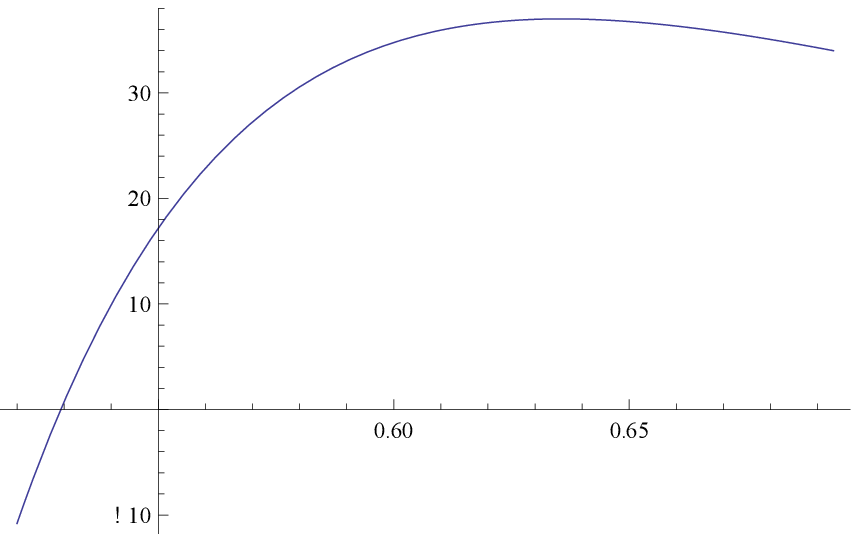}
\caption{Graph of $f_9(x)=-{13 \over x^6}+{18\cdot3^{1\over3} \over x^5}-{19 \over 2x^3}+{6\cdot3^{1\over3} \over x^2}-{1 \over 2}$ shows that it is positive on $[0.54, 3^{-1 \over 3})$.}
\label{fig 9}
\end{figure}

\begin{figure}
\centering
\includegraphics[]{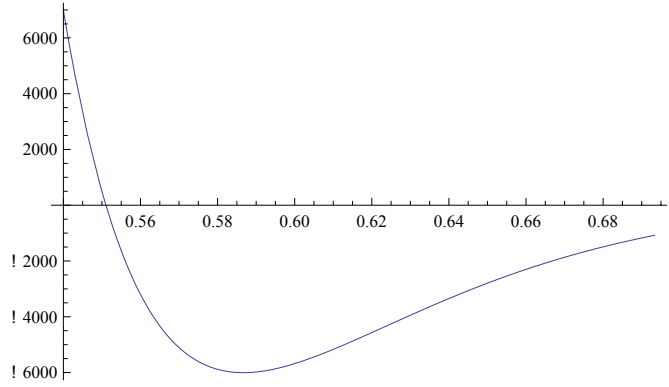}
\caption{Graph of $f_{10}(x)={D_x \over 4}$ shows that it is negative on $[0.56, 3^{-1 \over 3})$.}
\label{fig 10}
\end{figure}

We can see that the coefficient of $t^2$ for $g_x$ is positive for all $x \in (0.54, 3^{-1 \over 3})$, namely $-{13 \over x^6}+{18\cdot3^{1\over3} \over x^5}-{19 \over 2x^3}+{6\cdot3^{1\over3} \over x^2}-{1 \over 2}>0 \textrm{ on } x \in (0.54, 3^{-1 \over 3})$, from its graph in Figure \ref{fig 9} and this discriminant $D_x <0$ for all  $x \in [0.56, 3^{-1 \over 3})$ from the graph of ${D_x \over 4}$ in Figure \ref{fig 10}. This means that degree 2 polynomial $g$ has a positive coefficient for $t^2$ and has no real root for all $x \in [0.56, 3^{-1 \over 3})$. Therefore, we have proven that 
$$g_x(\sqrt{{1-k \over 1+k(3x^3-1)}(3+{2 \cdot 3^{2\over3}\over x})})>0 \textrm{ for all } (x, k) \in [0.56, 3^{-1\over3}) \times [0, 1).$$
The inequality for $(x, k) \in [0.54, 0.56) \times [0, 1)$ is still left. To complete the proof, we note that the possible values of $t$ for the proof satisfy the inequality
$$0 < \sqrt{{1-k \over 1+k(3x^3-1)}(3+{2 \cdot 3^{2\over3}\over x})} < \sqrt{3+{2\cdot 3^{2 \over 3}\over x}}$$
We compute the derivative
$${dg_x \over dt}(\sqrt{3+{2 \cdot 3^{2\over3} \over x}})
=2(-{13 \over x^6}+{18\cdot3^{1\over3} \over x^5}-{19 \over 2x^3}+{6\cdot3^{1\over3} \over x^2}-{1 \over 2})(\sqrt{3+{2 \cdot 3^{2\over3} \over x}})-2\sqrt{2}({3^{1\over3} \over x^5}+{3^{2 \over 3} \over x^4}+{3 \over x^3}+{3^{4\over3} \over x^2})$$
of $g_x$ at $\sqrt{3+{2 \cdot 3^{2\over3} \over x}}$. Then we have that
$${dg_x \over dt}(\sqrt{3+{2 \cdot 3^{2\over3} \over x}})<0 \textrm{ for all } x \in [0.54, 0.56)$$
from the graph of ${dg_x \over dt}(\sqrt{3+{2 \cdot 3^{2\over3} \over x}})$ in Figure \ref{fig 11}.

\begin{figure}
\centering
\includegraphics[]{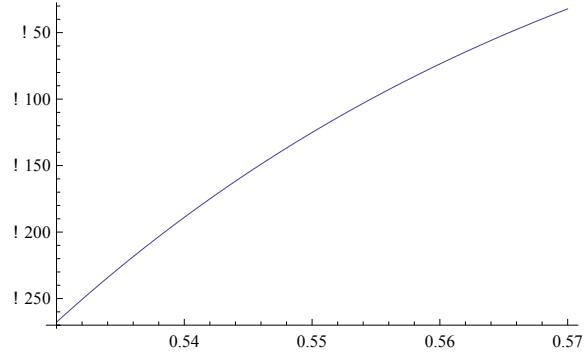}
\caption{Graph of $f_{11}(x)={dg_x \over dt}(\sqrt{3+{2 \cdot 3^{2\over3} \over x}})$ shows that it is negative on [0.54, 0.56].}
\label{fig 11}
\end{figure}

Thus we have the inequality $g_x(t)>g_x(\sqrt{3+{2 \cdot 3^{2\over3} \over x}})$ for all $t \in (0, \sqrt{3+{2 \cdot 3^{2\over3} \over x}})$, when $x \in (0.54, 0.56]$. In particular, we have
$$g_x(\sqrt{{1-k \over 1+k(3x^3-1)}(3+{2 \cdot 3^{2\over3}\over x})}) > g_x(\sqrt{3+{2 \cdot 3^{2\over3} \over x}})$$
for all $(x, k) \in [0.54, 0.56) \times [0, 1)$. Therefore, it is enough to see the following inequality
\begin{eqnarray*}
g_x(\sqrt{3+{2 \cdot 3^{2\over3} \over x}})&:=& (-{13 \over x^6}+{18\cdot3^{1\over3} \over x^5}-{19 \over 2x^3}+{6\cdot3^{1\over3} \over x^2}-{1 \over 2})(3+{2 \cdot 3^{2\over3} \over x})
\\ & &-2\sqrt{2}({3^{1\over3} \over x^5}+{3^{2 \over 3} \over x^4}+{3 \over x^3}+{3^{4\over3} \over x^2})(\sqrt{3+{2 \cdot 3^{2\over3} \over x}})
\\ & &+({15 \cdot 3^{2\over3} \over x^7}-{27 \over x^6}-{18 \cdot 3^{1 \over 3} \over x^5}+{5 \cdot 3^{2 \over 3} \over x^4}+{12 \over x^3}+{9 \cdot 3^{1\over 3} \over x^2}) > 0 \textrm{ on } x \in (0.54, 0.56]
\end{eqnarray*}
in order to prove $g_x(\sqrt{{1-k \over 1+k(3x^3-1)}(3+{2 \cdot 3^{2\over3}\over x})})>0$. This inequality $g_x(\sqrt{3+{2 \cdot 3^{2\over3} \over x}})>0$ for $x \in (0.54, 0.56]$ can be seen from its graph in Figure \ref{fig 12}.

\begin{figure}
\centering
\includegraphics[]{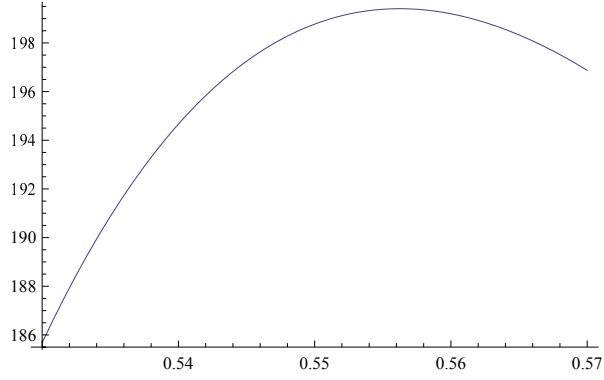}
\caption{Graph of $f_{12}(x)=g(\sqrt{3+{2\cdot3^{2 \over 3} \over x}})$ shows that it is positive on $[0.54, 3^{-1 \over 3})$.}
\label{fig 12}
\end{figure}

Therefore, we have proven
$$g_x(\sqrt{{1-k \over 1+k(3x^3-1)}(3+{2 \cdot 3^{2\over3}\over x})})>0$$
for all $(x, k) \in [0.54, 3^{-1 \over 3})$. This implies inequality (\ref{ineq2 in prop 4.10}) and so (\ref{ineq1 in prop 4.10}). This completes the proof of Proposition \ref{prop 4.10}.
\end{proof}

As in the computation for $l_q({\pi \over 4}+1)$, we can express
$$
l_q({\pi \over 4}-1)=w^t \mathcal{H}w+2\sqrt{2c-2c_0}(\sqrt{2}(3r-{9\over r^2})\cos \theta \sin \theta)+(2c-2c_0)({5 \over 2}+{1 \over {2r^3}}-{4c \over r^2}+3 \sin \theta \cos \theta)
$$
in terms of $r, \theta$.

\begin{prop 4.11.}
The inequality $l_q ({\pi \over 4}-1)>0$ holds for every $q \in \mathfrak{R}^{+} \cap (B_{0.63}(0) \backslash B_{0.54}(0))$.
\end{prop 4.11.}

\begin{proof} \hypertarget{pf of prop 4.11}
Following the computations in the proof of Proposition \ref{prop 4.10}, we can get a lower bound for $l_q({\pi \over 4}-1)$
\begin{eqnarray*}
l_q({\pi \over 4}-1) &\ge& w^t \mathcal{H}w+2\sqrt{2c-2c_0}(\sqrt{2}(3r-{9\over r^2})\cos \theta \sin \theta)+(2c-2c_0)({5 \over 2}+{1 \over 2r^3}-{4c \over r^2})
\\ &=& {15 \over r^7}-{39\cdot3^{1\over3}\over r^6}+{27\cdot3^{2\over3} \over r^5}+{14\over r^4}-{24\cdot3^{1\over3} \over r^3}-{6\over r}+9\cdot3^{1\over3}
\\ & &+2\sqrt{2c-2c_0}(\sqrt{2}(3r-{9 \over r^2})\cos \theta \sin \theta)
\\ & &+(2c-2c_0)(-{13\over r^6}+{6 \cdot3^{4\over3} \over r^5}-{15\over 2r^3}-{2\cdot3^{4\over3} \over r^2}+{11\over2}+3\cos \theta \sin \theta)
\\ & &+(2c-2c_0)^2({3\over r^5}-{2\over r^2})
\\ &\ge& {15 \over r^7}-{39\cdot3^{1\over3}\over r^6}+{27\cdot3^{2\over3} \over r^5}+{14\over r^4}-{24\cdot3^{1\over3} \over r^3}-{6\over r}+9\cdot3^{1\over3}
\\ & &+2\sqrt{2c-2c_0}(\sqrt{2}(3r-{9 \over r^2})\sin \theta)
\\ & &+(2c-2c_0)(-{13\over r^6}+{6 \cdot3^{4\over3} \over r^5}-{15\over 2r^3}-{2\cdot3^{4\over3} \over r^2}+{11\over2})
\end{eqnarray*}
for all $q \in \mathfrak{R}^{+}  \backslash B_{0.54}(0)$. Thus it is enough to prove the following inequality
\begin{equation}
\label{ineq1 in prop 4.11}
\begin{split}
 & {15 \over r^7}-{39\cdot3^{1\over3}\over r^6}+{27\cdot3^{2\over3} \over r^5}+{14\over r^4}-{24\cdot3^{1\over3} \over r^3}-{6\over r}+9\cdot3^{1\over3}
\\  &+2\sqrt{2c-2c_0}(\sqrt{2}(3r-{9 \over r^2})\sin \theta)+(2c-2c_0)(-{13\over r^6}+{6 \cdot3^{4\over3} \over r^5}-{15\over 2r^3}-{2\cdot3^{4\over3} \over r^2}+{11\over2})>0
\end{split}
\end{equation}
for all $q \in \mathfrak{R}^{+} \cap (B_{0.63}(0) \backslash B_{0.54}(0))$ in order to prove Proposition \ref{prop 4.11}. Using the variables in Lemma \ref{Lem 5.2}, we have that
\begin{equation}
\label{id1 in prop 4.11}  
y={1+3k(3^{1\over3}x-1) \over 1+k(3x^3-1)},\quad \sin^2 \theta=1-y={xk \over 1+k(3x^3-1)}(3x^2+{2\over x}-3^{4\over3}).
\end{equation}
For notational convenience, we define $f(x):={15 \over x^7}-{39\cdot3^{1\over3}\over x^6}+{27\cdot3^{2\over3} \over x^5}+{14\over x^4}-{24\cdot3^{1\over3} \over x^3}-{6\over x}+9\cdot3^{1\over3}$. Using the identity (\ref{id1 in prop 4.10}) with above computations, we can write the left hand side of inequality (\ref{ineq1 in prop 4.11})
\begin{eqnarray*}
& & {15 \over r^7}-{39\cdot3^{1\over3}\over r^6}+{27\cdot3^{2\over3} \over r^5}+{14\over r^4}-{24\cdot3^{1\over3} \over r^3}-{6\over r}+9\cdot3^{1\over3}
\\ & &+2\sqrt{2c-2c_0}(\sqrt{2}(3r-{9 \over r^2})\sin \theta)+(2c-2c_0)(-{13\over r^6}+{6 \cdot3^{4\over3} \over r^5}-{15\over 2r^3}-{2\cdot3^{4\over3} \over r^2}+{11\over2})
\\ &=& f(x)+2\sqrt{2}{\sqrt{x}\sqrt{k-k^2} \over 1+k(3x^3-1)}(3x^2+{2\over x}-3^{4\over3})(3x-{9 \over x^2})
\\ & &+{1-k \over 1+k(3x^3-1)}(3x^2+{2\over x}-3^{4\over3})(-{13\over x^6}+{6 \cdot3^{4\over3} \over x^5}-{15\over 2x^3}-{2\cdot3^{4\over3} \over x^2}+{11\over2})
\end{eqnarray*}
in terms of $x, k$. We note that ${1-k \over 1+k(3x^3-1)}$ decreases as $k$ increases for fixed $x$. We compute the partial derivative of ${\sqrt{k-k^2} \over 1+k(3x^3-1)}$ with respect to $k$
$${\partial \over \partial k}({\sqrt{k-k^2} \over 1+k(3x^3-1)})={1-k(3x^3+1) \over 2\sqrt{k-k^2}(1+k(3x^3-1))^2}.$$
One can easily see that ${\sqrt{k-k^2} \over 1+k(3x^3-1)}$ attain its maximum at $k={1\over 3x^3+1}>{1\over 2}$. Moreover, ${\sqrt{k-k^2} \over 1+k(3x^3-1)}$ increases for $k<{1\over 3x^3+1}$ and decreases for $k>{1\over 3x^3+1}$ with respect to $k$ when we fix the other variable $x$. We recall the decompositions (\ref{decom1 in prop 4.10}) in the \hyperlink{pf of prop 4.10}{proof of Proposition 4.10}. Then we can factor out the term $(3^{-1 \over 3}-x)^2$ from inequality (\ref{ineq1 in prop 4.11}). With these notations and discussions, we will prove the following inequality
\begin{equation}
\label{ineq2 in prop 4.11}
\begin{split}
& g(x)+2\sqrt{2}{\sqrt{x}\sqrt{k-k^2} \over 1+k(3x^3-1)}(3+{{2 \cdot 3^{2 \over 3}} \over x})(3x-{9 \over x^2})
\\ & +{1-k \over 1+k(3x^3-1)}(3+{{2 \cdot 3^{2 \over 3}} \over x})(-{13\over x^6}+{6 \cdot3^{4\over3} \over x^5}-{15\over 2x^3}-{2\cdot3^{4\over3} \over x^2}+{11\over2})>0
\end{split}
\end{equation}
for all $(x, k) \in (0.54, 0.63) \times [0, 1]$ where $g(x)=({15 \cdot 3^{2\over3} \over x^7}-{27 \over x^6}-{18 \cdot 3^{1 \over 3} \over x^5}+{5 \cdot 3^{2 \over 3} \over x^4}+{12 \over x^3}+{9 \cdot 3^{1\over 3} \over x^2})$. This is equivalent with inequality (\ref{ineq1 in prop 4.11}).

The strategy for the proof of (\ref{ineq2 in prop 4.11}) can be described as follows.
\\ \\ 
$\bold{1.}$ We divide the region into several cases in terms of $k$. 
\begin{eqnarray*}
& & \hyperlink{case 1}{\bold{Case \ \ 1)}} \ \ 0 \le k \le {1\over3}, \ \ \hyperlink{case 2}{\bold{Case \ \ 2)}} \ \  {1\over3} \le k \le {2\over3}, \ \ \hyperlink{case 3}{\bold{Case \ \ 3)}} \ \ {2\over3} \le k \le {3\over4}
\\ & & \hyperlink{case 4}{\bold{Case \ \ 4)}} \ \ {3\over4} \le k \le {4\over5}, \ \ \hyperlink{case 5}{\bold{Case \ \ 5)}} \ \ {4\over5} \le k \le 1
\end{eqnarray*}
\\ \\
$\bold{2.}$ We make the following estimates
$$ {\sqrt{x}\sqrt{k-k^2} \over 1+k(3x^3-1)}(3+{{2 \cdot 3^{2 \over 3}} \over x}) \le U_i (x), \quad 
m_i (x) \le {1-k \over 1+k(3x^3-1)}(3+{{2 \cdot 3^{2 \over 3}} \over x}) \le M_i (x)$$
for each $\bold{Case \ \ i}$ of $i=1, 2, 3, 4, 5$.
\\ \\
$\bold{3.}$ We construct lower bounds 
\begin{eqnarray*}
& & g(x)+2\sqrt{2}{\sqrt{x}\sqrt{k-k^2} \over 1+k(3x^3-1)}(3+{{2 \cdot 3^{2 \over 3}} \over x})(3x-{9 \over x^2})
\\ & & +{1-k \over 1+k(3x^3-1)}(3+{{2 \cdot 3^{2 \over 3}} \over x})(-{13\over x^6}+{6 \cdot3^{4\over3} \over x^5}-{15\over 2x^3}-{2\cdot3^{4\over3} \over x^2}+{11\over2}) \ge L_i (x) 
\end{eqnarray*}
for each $\bold{Case \ \ i}$ where the function $L_i$ of $x$ is defined by
\begin{eqnarray*}
L_i (x)= \min\{L_i^m (x):=g(x)+2\sqrt{2}U_i(x)(3x-{9\over x^2})+m_i(x)(-{13\over x^6}+{6 \cdot3^{4\over3} \over x^5}-{15\over 2x^3}-{2\cdot3^{4\over3} \over x^2}+{11\over2}),
\\ L_i^M (x):=g(x)+2\sqrt{2}U_i(x)(3x-{9\over x^2})+M_i(x)(-{13\over x^6}+{6 \cdot3^{4\over3} \over x^5}-{15\over 2x^3}-{2\cdot3^{4\over3} \over x^2}+{11\over2})\}.
\end{eqnarray*}
\\ \\
$\bold{4.}$ We prove the inequality $L_i(x)>0$ for all $x \in [0.54, 0.63]$ and $i=1, 2, 3, 4, 5$ by showing the following inequalities
\begin{eqnarray*}
L_i^m (x)=g(x)+2\sqrt{2}U_i(x)(3x-{9\over x^2})+m_i(x)(-{13\over x^6}+{6 \cdot3^{4\over3} \over x^5}-{15\over 2x^3}-{2\cdot3^{4\over3} \over x^2}+{11\over2})>0
\\ L_i^M (x)=g(x)+2\sqrt{2}U_i(x)(3x-{9\over x^2})+M_i(x)(-{13\over x^6}+{6 \cdot3^{4\over3} \over x^5}-{15\over 2x^3}-{2\cdot3^{4\over3} \over x^2}+{11\over2})>0
\end{eqnarray*}
for all $x \in [0.54, 0.63]$, respectively.
\\ We will use the graph of each of functions $L_i^m, L_i^M$ to show $L_i(x)>0$ for each $i=1, 2, 3, 4, 5$. 
\\ \\

\hypertarget{case 1}{$\bold{Case \ \ 1)}$} $0 \le k \le {1\over3}$

For each fixed $x \in [0.54, 0.63]$, the term ${\sqrt{x}\sqrt{k-k^2} \over 1+k(3x^3-1)}(3+{{2 \cdot 3^{2 \over 3}} \over x})$ attains its maximum at $k={1\over3}$ for $k \in [0, {1 \over 3}]$ and the maximum value is given by the function $U_1(x)={\sqrt{2x} \over 3x^3+2}(3+{{2 \cdot 3^{2 \over 3}} \over x})$ of $x$. The term ${1-k \over 1+k(3x^3-1)}(3+{{2 \cdot 3^{2 \over 3}} \over x})$ has the value between its value $m_1(x)={2 \over 3x^3+2}(3+{{2 \cdot 3^{2 \over 3}} \over x})$ at ${1 \over 3}$ and its value $M_1(x)=(3+{{2 \cdot 3^{2 \over 3}} \over x})$ at $k=0$ for $k \in [0, {1 \over 3}]$. It suffices to show that the functions
\begin{eqnarray*}
L_1^m (x) &=& g(x)+2\sqrt{2}{\sqrt{2x} \over 3x^3+2}(3+{{2 \cdot 3^{2 \over 3}} \over x})(3x-{9\over x^2})
\\ & &+{2 \over 3x^3+2}(3+{{2 \cdot 3^{2 \over 3}} \over x})(-{13\over x^6}+{6 \cdot3^{4\over3} \over x^5}-{15\over 2x^3}-{2\cdot3^{4\over3} \over x^2}+{11\over2}),
\\ L_1^M (x) &=&g(x)+2\sqrt{2}{\sqrt{2x} \over 3x^3+2}(3+{{2 \cdot 3^{2 \over 3}} \over x})(3x-{9\over x^2})
\\ & &+(3+{{2 \cdot 3^{2 \over 3}} \over x})(-{13\over x^6}+{6 \cdot3^{4\over3} \over x^5}-{15\over 2x^3}-{2\cdot3^{4\over3} \over x^2}+{11\over2})
\end{eqnarray*}
of $x$ are positive for all $x \in [0.54, 0.63]$. We can see that the inequalities $L_1^m (x)>0, L_1^M (x)>0$ hold for all $x \in [0.54, 0.63]$ from their graphs in Figure \ref{fig 13}.

\begin{figure}
\centering
\includegraphics[]{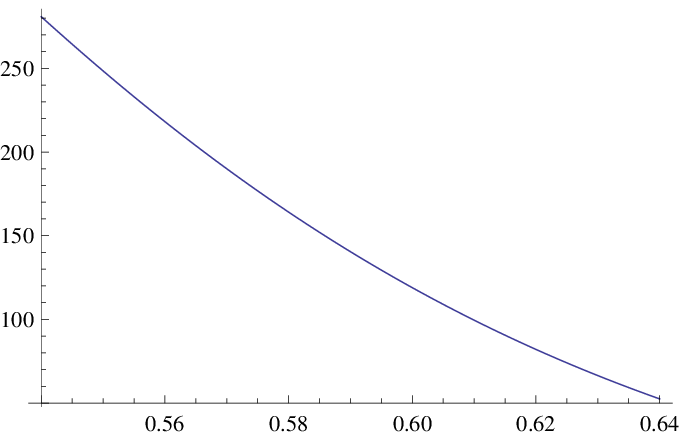}
\includegraphics[]{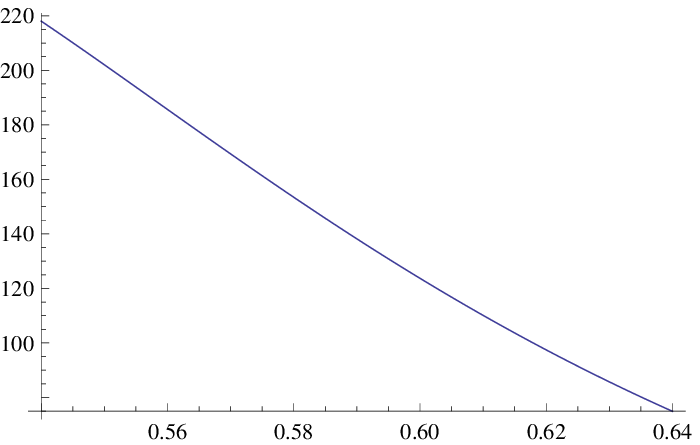}
\caption{Case 1) Graphs of $f^m_{13}(x)=L_1^m (x)$(left) and $f^M_{13}(x)=L_1^M (x)$(right) show that they are positive on $[0.54, 0.63]$.}
\label{fig 13}
\end{figure}

\hypertarget{case 2}{$\bold{Case \ \ 2)}$} ${1\over 3} \le k \le {2 \over 3}$

For each fixed $x \in [0.54, 0.63]$, the term ${\sqrt{x}\sqrt{k-k^2} \over 1+k(3x^3-1)}(3+{{2 \cdot 3^{2 \over 3}} \over x})$ attains its maximum at $k={1\over 3x^3+1}$ among $k \in [{1 \over 3}, {2 \over 3}]$ and the maximum value is the function $U_2 (x)={1 \over 2 \sqrt{3}x}(3+{{2 \cdot 3^{2 \over 3}} \over x})$ of $x$. The term ${1-k \over 1+k(3x^3-1)}(3+{{2 \cdot 3^{2 \over 3}} \over x})$ has the value between $m_2 (x)={1 \over 6x^3+1}(3+{{2 \cdot 3^{2 \over 3}} \over x})$ and $M_2 (x)={2 \over 3x^3+2}(3+{{2 \cdot 3^{2 \over 3}} \over x})$ for $k \in [{1 \over 3}, {2 \over 3}]$. It suffices to show that the functions
\begin{eqnarray*}
L_2^m (x) &=& g(x)+{\sqrt{2} \over \sqrt{3}x}(3+{{2 \cdot 3^{2 \over 3}} \over x})(3x-{9\over x^2})
\\ & &+{1 \over 6x^3+1}(3+{{2 \cdot 3^{2 \over 3}} \over x})(-{13\over x^6}+{6 \cdot3^{4\over3} \over x^5}-{15\over 2x^3}-{2\cdot3^{4\over3} \over x^2}+{11\over2}),
\\ L_2^M (x) &=& g(x)+{\sqrt{2} \over \sqrt{3}x}(3+{{2 \cdot 3^{2 \over 3}} \over x})(3x-{9\over x^2})
\\ & &+{2 \over 3x^3+2}(3+{{2 \cdot 3^{2 \over 3}} \over x})(-{13\over x^6}+{6 \cdot3^{4\over3} \over x^5}-{15\over 2x^3}-{2\cdot3^{4\over3} \over x^2}+{11\over2})
\end{eqnarray*}
of $x$ are positive for all $x \in [0.54, 0.63]$. We can see that the inequalities $L_2^m (x)>0, L_2^M (x)>0$ hold for all $x \in [0.54, 0.63]$ from their graphs in Figure \ref{fig 14}.

\begin{figure}
\centering
\includegraphics[]{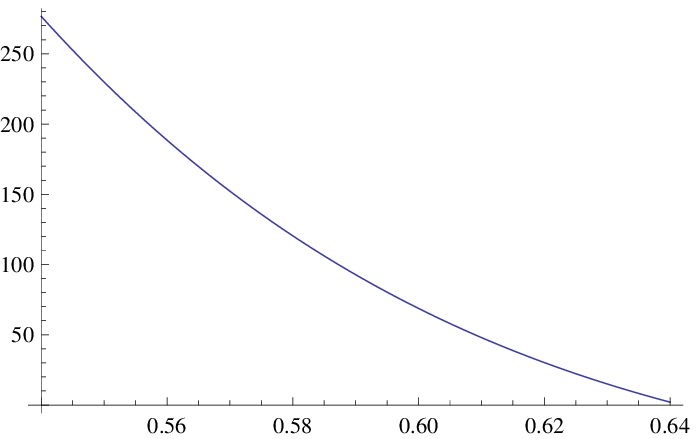}
\includegraphics[]{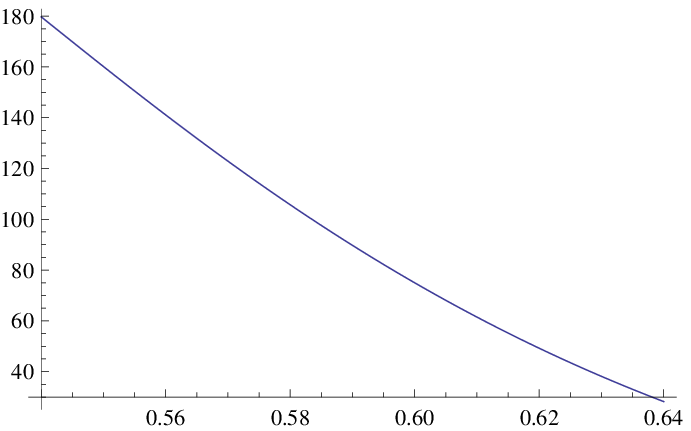}
\caption{Case 2) Graphs of $f^m_{14}(x)=L_2^m (x)$(left) and $f^M_{14}(x)=L_2^M (x)$(right) show that they are positive on $ [0.54, 0.63]$.}
\label{fig 14}
\end{figure}

\hypertarget{case 3}{$\bold{Case \ \ 3)}$} ${2\over 3} \le k \le {3 \over 4}$

For each fixed $x \in [0.54, 0.63]$, the term ${\sqrt{x}\sqrt{k-k^2} \over 1+k(3x^3-1)}(3+{{2 \cdot 3^{2 \over 3}} \over x})$ attains its maximum at $k={2\over 3}$ among $k \in [{2 \over 3}, {3 \over 4}]$ and the maximum value is the function $U_3 (x)={\sqrt{2x} \over 6x^3+1}(3+{{2 \cdot 3^{2 \over 3}} \over x})$ of $x$. The term ${1-k \over 1+k(3x^3-1)}(3+{{2 \cdot 3^{2 \over 3}} \over x})$ has the value between $m_3 (x)={1 \over 9x^3+1}(3+{{2 \cdot 3^{2 \over 3}} \over x})$ and $M_3 (x)={1 \over 6x^3+1}(3+{{2 \cdot 3^{2 \over 3}} \over x})$ for $k \in [{2 \over 3}, {3 \over 4}]$. It suffices to show that the functions
\begin{eqnarray*}
L_3^m (x)&= &g(x)+2\sqrt{2}{\sqrt{2x} \over 6x^3+1}(3+{{2 \cdot 3^{2 \over 3}} \over x})(3x-{9\over x^2})
\\ & &+{1 \over 9x^3+1}(3+{{2 \cdot 3^{2 \over 3}} \over x})(-{13\over x^6}+{6 \cdot3^{4\over3} \over x^5}-{15\over 2x^3}-{2\cdot3^{4\over3} \over x^2}+{11\over2}),
\\ L_3^M (x)&= &g(x)+2\sqrt{2}{\sqrt{2x} \over 6x^3+1}(3+{{2 \cdot 3^{2 \over 3}} \over x})(3x-{9\over x^2})
\\ & &+{1 \over 6x^3+1}(3+{{2 \cdot 3^{2 \over 3}} \over x})(-{13\over x^6}+{6 \cdot3^{4\over3} \over x^5}-{15\over 2x^3}-{2\cdot3^{4\over3} \over x^2}+{11\over2})
\end{eqnarray*}
of $x$ are positive for all $x \in [0.54, 0.63]$. We can see that the inequalities $L_3^m (x)>0, L_3^M (x)>0$ hold for all $x \in [0.54, 0.63]$ from their graphs in Figure \ref{fig 15}.

\begin{figure}
\centering
\includegraphics[]{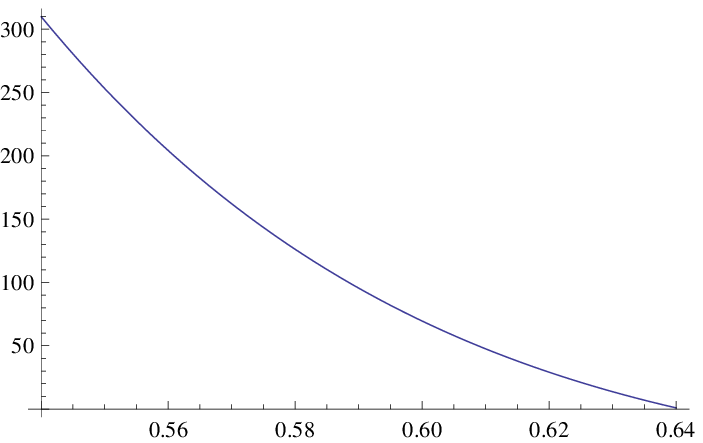}
\includegraphics[]{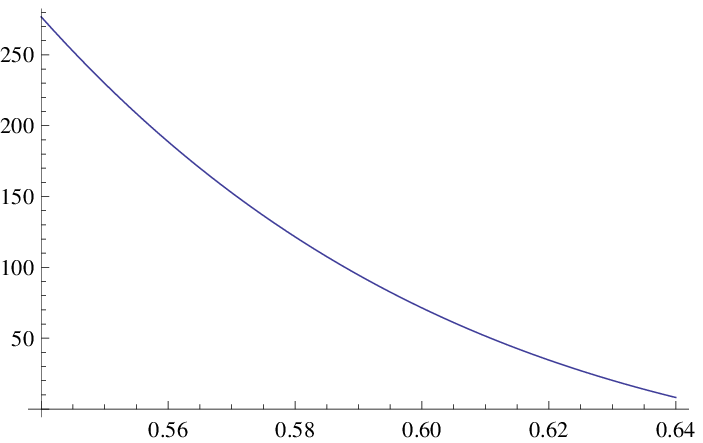}
\caption{Case 3) Graphs of $f^m_{15}(x)=L_3^m (x)$(left) and $f^M_{15}(x)=L_3^M (x)$(right) show that they are positive on $ [0.54, 0.63]$.}
\label{fig 15}
\end{figure}

\hypertarget{case 4}{$\bold{Case \ \ 4)}$} ${3\over 4} \le k \le {4 \over 5}$

For each fixed $x \in [0.54, 0.63]$, the term ${\sqrt{x}\sqrt{k-k^2} \over 1+k(3x^3-1)}(3+{{2 \cdot 3^{2 \over 3}} \over x})$ attains its maximum at $k={3\over 4}$ among $k \in [{3 \over 4}, {4 \over 5}]$ and the maximum value is the function $U_4 (x)={\sqrt{3x} \over 9x^3+1}(3+{{2 \cdot 3^{2 \over 3}} \over x})$ of $x$. The term ${1-k \over 1+k(3x^3-1)}(3+{{2 \cdot 3^{2 \over 3}} \over x})$ has the value between $m_4 (x)={1 \over 12x^3+1}(3+{{2 \cdot 3^{2 \over 3}} \over x})$ and $M_4 (x)={1 \over 9x^3+1}(3+{{2 \cdot 3^{2 \over 3}} \over x})$ for $k \in [{3 \over 4}, {4 \over 5}]$. It suffices to show that the functions
\begin{eqnarray*}
L_4^m (x)&=&g(x)+2\sqrt{2}{\sqrt{3x} \over 9x^3+1}(3+{{2 \cdot 3^{2 \over 3}} \over x})(3x-{9\over x^2})
\\ & &+{1 \over 12x^3+1}(3+{{2 \cdot 3^{2 \over 3}} \over x})(-{13\over x^6}+{6 \cdot3^{4\over3} \over x^5}-{15\over 2x^3}-{2\cdot3^{4\over3} \over x^2}+{11\over2}),
\\ L_4^M (x)&=&g(x)+2\sqrt{2}{\sqrt{3x} \over 9x^3+1}(3+{{2 \cdot 3^{2 \over 3}} \over x})(3x-{9\over x^2})
\\ & &+{1 \over 9x^3+1}(3+{{2 \cdot 3^{2 \over 3}} \over x})(-{13\over x^6}+{6 \cdot3^{4\over3} \over x^5}-{15\over 2x^3}-{2\cdot3^{4\over3} \over x^2}+{11\over2})
\end{eqnarray*}
of $x$ are positive for all $x \in [0.54, 0.63]$. We can see that the inequalities $L_4^m (x)>0, L_4^M (x)>0$ hold for all $x \in [0.54, 0.63]$ from their graphs in Figure \ref{fig 16}.

\begin{figure}
\centering
\includegraphics[]{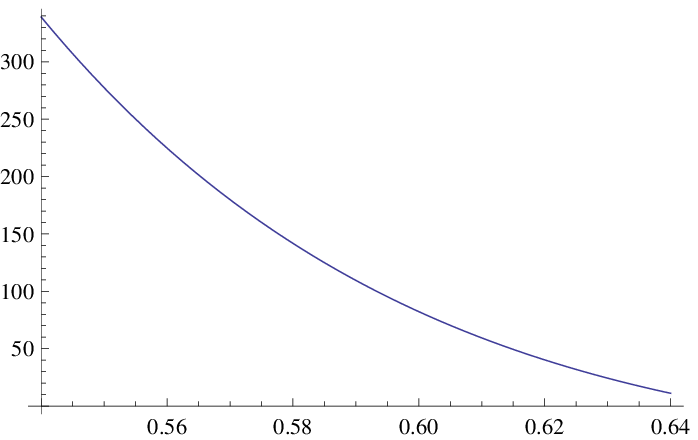}
\includegraphics[]{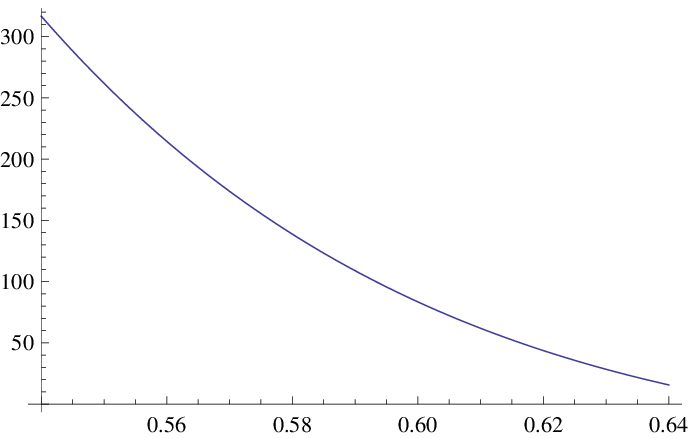}
\caption{Case 4) Graphs of $f^m_{16}(x)=L_4^m (x)$(left) and $f^M_{16}(x)=L_4^M (x)$(right) show that they are positive on $ [0.54, 0.63]$.}
\label{fig 16}
\end{figure}

\hypertarget{case 5}{$\bold{Case \ \ 5)}$} ${4\over 5} \le k \le 1$

For each fixed $x \in [0.54, 0.63]$, the term ${\sqrt{x}\sqrt{k-k^2} \over 1+k(3x^3-1)}(3+{{2 \cdot 3^{2 \over 3}} \over x})$ attains its maximum at $k={4\over 5}$ among $k \in [{4 \over 5}, 1]$ and the maximum value is the function  $U_5 (x)={2\sqrt{x} \over 12x^3+1}(3+{{2 \cdot 3^{2 \over 3}} \over x})$ of $x$. The term ${1-k \over 1+k(3x^3-1)}(3+{{2 \cdot 3^{2 \over 3}} \over x})$ has the value between $m_5 (x)=0$ and $M_5 (x)={1 \over 12x^3+1}(3+{{2 \cdot 3^{2 \over 3}} \over x})$ for $k \in [{4 \over 5}, 1]$. It suffices to show that the functions
\begin{eqnarray*}
L_5^m (x)&=&g(x)+2\sqrt{2}{2\sqrt{x} \over 12x^3+1}(3+{{2 \cdot 3^{2 \over 3}} \over x})(3x-{9\over x^2}),
\\ L_5^M (x)&=&g(x)+2\sqrt{2}{2\sqrt{x} \over 12x^3+1}(3+{{2 \cdot 3^{2 \over 3}} \over x})(3x-{9\over x^2})
\\ & &+{1 \over 12x^3+1}(3+{{2 \cdot 3^{2 \over 3}} \over x})(-{13\over x^6}+{6 \cdot3^{4\over3} \over x^5}-{15\over 2x^3}-{2\cdot3^{4\over3} \over x^2}+{11\over2})
\end{eqnarray*}
of $x$ are positive for all $x \in [0.54, 0.63]$. We can see that the inequalities $L_4^m (x)>0, L_4^M (x)>0$ hold for all $x \in [0.54, 0.63]$ from their graphs in Figure \ref{fig 17}.

\begin{figure}
\centering
\includegraphics[]{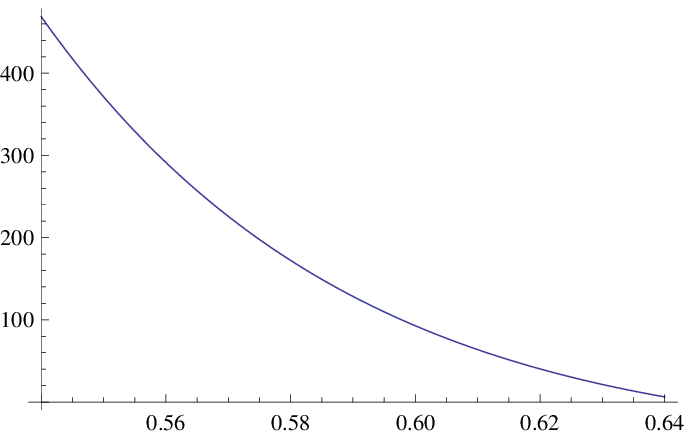}
\includegraphics[]{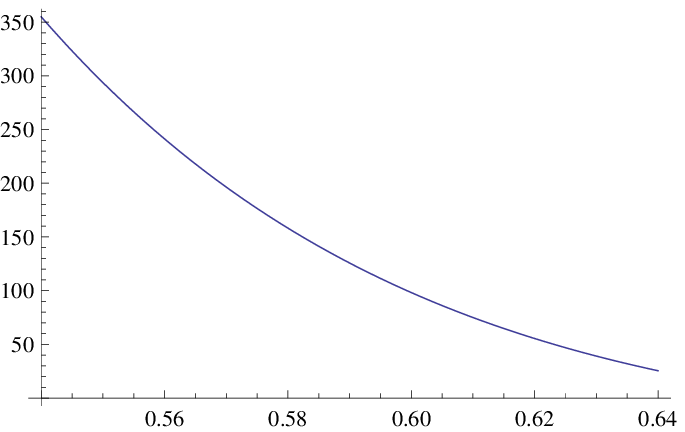}
\caption{Case 5) Graphs of $f^m_{17}(x)=L_5^m (x)$(left) and $f^M_{17}=L_5^M (x)$(right) show that they are positive on $[0.54, 0.63]$.}
\label{fig 17}
\end{figure}

These cases complete the \hyperlink{pf of prop 4.11}{proof of Proposition 4.11}.
\end{proof}

We can prove \hyperlink{step 2}{$\bold{Step \ \ 2}$} by combining the above results.
\\

\begin{proof} [Proof of \hyperlink{step 2}{$\bold{Step \ \ 2}$}]
By Proposition \ref{prop 4.5}, we only need to show that $f_q(\alpha)>0$ for all $q \in \mathfrak{R} \cap (B_{0.63} \backslash B_{0.54}(0))$ and $\alpha \in [0, {\pi \over 2}]$. The function $f_q$ is convex on $[0, {\pi \over 2}]$ by Lemma \ref{Lem 4.9}. This implies that the tangent line $l_q$ of $f_q$ at ${\pi \over 4}$ is below $f_q$. Thus we have
$$\min_{\alpha \in [0, {\pi \over 2}]} {f_q (\alpha)} \ge \min_{\alpha \in [0, {\pi \over 2}]} {l_q (\alpha)}
 \ge \min_{\alpha \in [{\pi \over 4}-1, {\pi \over 4}+1]} {l_q (\alpha)}=\min\{l_q({\pi \over 4}+1), l_q({\pi \over 4}-1)\} \textrm{ for any } q \in \mathfrak{R} \backslash B_{0.54}(0)$$
Proposition \ref{prop 4.10} and \ref{prop 4.11} prove that $\min\{l_q({\pi \over 4}+1), l_q({\pi \over 4}-1)\}>0$ for any $q \in \mathfrak{R} \cap (B_{0.63} \backslash B_{0.54}(0))$. Therefore, $\min_{\alpha \in [0, {\pi \over 2}]} {f_q (\alpha)}>0$ for any $q \in \mathfrak{R} \cap (B_{0.63} \backslash B_{0.54}(0))$. This proves \hyperlink{step 2}{$\bold{Step \ \ 2}$}.
\end{proof}

We will prove \hyperlink{step 3}{$\bold{Step \ \ 3}$} from now. We can consider only when $r>0.63$. We recall that
\begin{eqnarray*}
f_q(\alpha)&:=&(w+s_{\theta+\alpha})^t \mathcal{H} (w+s_{\theta+\alpha})
\\ &=& w^t \mathcal{H}w+2\sqrt{2c-2c_0}(\cos \alpha (3r-{9 \over r^2})\cos \theta \sin \theta+ \sin \alpha (-{1 \over r^5}+{2c \over r}))
\\ & &+(2c-2c_0)(\cos^2 \alpha (1-{2c \over r^2})+\sin^2 \alpha (-{1 \over r^3}+{2c \over r^2}-2)+2 \cos \alpha \sin \alpha(3 \cos \theta \sin \theta))
\end{eqnarray*}

\begin{proof} [Proof of \hyperlink{step 3}{$\bold{Step \ \ 3}$}]
It suffices to show that $f_q(\alpha)>0$ for all $q \in \mathfrak{R} \backslash B_{0.63}(0)$ and $\alpha \in [0, {\pi \over 2}]$ by Proposition \ref{prop 4.5}. We define the function $G(q, \alpha):=f_q(\alpha)$ of $q$ and $\alpha$. We have that
\begin{eqnarray*}
G(q,\alpha)&:=&f_q(\alpha)
\\ &=& w^t \mathcal{H}w+2\sqrt{2c-2c_0}(\cos \alpha (3r-{9 \over r^2})\cos \theta \sin \theta+ \sin \alpha (-{1 \over r^5}+{2c \over r}))
\\ & &+(2c-2c_0)(\cos^2 \alpha (1-{2c \over r^2})+\sin^2 \alpha (-{1 \over r^3}+{2c \over r^2}-2)+2 \cos \alpha \sin \alpha(3 \cos \theta \sin \theta))
\\ &\ge& w^t \mathcal{H}w+2\sqrt{2c-2c_0}(\cos \alpha (3r-{9 \over r^2})\sin \theta+ \sin \alpha (-{1 \over r^5}+{2c \over r}))
\\ & &+(2c-2c_0)(\cos^2 \alpha (1-{2c \over r^2})+\sin^2 \alpha (-{1 \over r^3}+{2c \over r^2}-2))
\\ &=:& E(q, \alpha)
\end{eqnarray*}
for $\alpha \in [0, {\pi \over 2}]$ and $q \in \mathfrak{R} \backslash B_{0.63}(0)$. It suffices to prove that $E(q, \alpha)>0$ for all $q \in \mathfrak{R} \backslash B_{0.63}(0)$ and $\alpha \in [0, {\pi \over 2}]$. We use the variables $x:=r, y:=\cos^2 \theta$ in Lemma \ref{Lem 5.2} and will denote again $E(x, y, \alpha)$ by ignoring the composition of this change of variables. Then we can express the function $E$ in terms of $x, y$ and $\alpha$ as follows.
 \begin{eqnarray*}
E(x, y, \alpha)&=&{15 \over x^7}-{39\sqrt[3]{3} \over x^6}+{27\sqrt[3]{9} \over x^5}+{14 \over x^4}-{24\sqrt[3]{3} \over x^3}-{6 \over x}+9\sqrt[3]{3}
\\ & &+(2c-2c_0)(-{13 \over x^6}+{18\sqrt[3]{3} \over x^5}-{8 \over x^3}+3)+(2c-2c_0)^2({3 \over x^5})
\\ & &+2\sqrt{2c-2c_0}(\cos \alpha (3x-{9 \over x^2})\sqrt{1-y}+ \sin \alpha (-{1 \over x^5}+{2c \over x}))
\\ & &+(2c-2c_0)(\cos^2 \alpha (1-{2c \over x^2})+\sin^2 \alpha (-{1 \over x^3}+{2c \over x^2}-2))
\\ &=& {15 \over x^7}-{39\sqrt[3]{3} \over x^6}+{27\sqrt[3]{9} \over x^5}+{14 \over x^4}-{24\sqrt[3]{3} \over x^3}-{6 \over x}+9\sqrt[3]{3}
\\ & &+(2c-2c_0)(-{13 \over x^6}+{18\sqrt[3]{3} \over x^5}-{8 \over x^3}+3)+(2c-2c_0)^2({3 \over x^5})
\\ & &+2\sqrt{2c-2c_0}(\cos \alpha (3x-{9 \over x^2})\sqrt{1-y}+ \sin \alpha (-{1 \over x^5}+{2c \over x}))
\\ & &+(2c-2c_0)(\cos^2 \alpha (1-{2c \over x^2})+\sin^2 \alpha (-{1 \over x^3}+{2c \over x^2}-2))
\\ &=& {15 \over x^7}-{39\sqrt[3]{3} \over x^6}+{27\sqrt[3]{9} \over x^5}+{14 \over x^4}-{24\sqrt[3]{3} \over x^3}-{6 \over x}+9\sqrt[3]{3}
\\ & &+(2c-2c_0)(-{13 \over x^6}+{18\sqrt[3]{3} \over x^5}-{8 \over x^3}+3)+(2c-2c_0)^2({3 \over x^5})
\\ & &+2\sqrt{2c-2c_0}(\cos \alpha (3x-{9 \over x^2})\sqrt{1-y}+ \sin \alpha (-{1 \over x^5}+{3^{4 \over 3} \over x}))+(2c-2c_0)^{3 \over 2} ({{2 \sin \alpha} \over x})
\\ & &+(2c-2c_0)(\cos^2 \alpha (1-{2c \over x^2})+\sin^2 \alpha (-{1 \over x^3}+{3^{4 \over 3} \over x^2}-2))+(2c-2c_0)^2 (-{{\cos^2 \alpha} \over x^2}+{{\sin^2 \alpha} \over x^2})
\end{eqnarray*}
where $2c=3x^2 y+{2 \over x}$. We can find a lower bound function $D(x, y, \alpha)$
\begin{eqnarray*}
E(x, y, \alpha)&=&{15 \over x^7}-{39\sqrt[3]{3} \over x^6}+{27\sqrt[3]{9} \over x^5}+{14 \over x^4}-{24\sqrt[3]{3} \over x^3}-{6 \over x}+9\sqrt[3]{3}
\\ & & +2\sqrt{2c-2c_0}(\cos \alpha (3x-{9 \over x^2})\sqrt{1-y}+ \sin \alpha (-{1 \over x^5}+{3^{4 \over 3} \over x}))
\\ & & +(2c-2c_0)(-{13 \over x^6}+{18\sqrt[3]{3} \over x^5}-{8 \over x^3}+3+\cos^2 \alpha (1-{3^{4 \over 3} \over x^2})+\sin^2 \alpha (-{1 \over x^3}+{3^{4 \over 3} \over x^2}-2))
\\ & & +(2c-2c_0)^{3 \over 2} ({{2 \sin \alpha} \over x})+(2c-2c_0)^2({3 \over x^5}-{{\cos^2 \alpha} \over x^2}+{{\sin^2 \alpha} \over x^2})
\\ &\ge& {15 \over x^7}-{39\sqrt[3]{3} \over x^6}+{27\sqrt[3]{9} \over x^5}+{14 \over x^4}-{24\sqrt[3]{3} \over x^3}-{6 \over x}+9\sqrt[3]{3}
\\ & & +2\sqrt{2c-2c_0}(\cos \alpha (3x-{9 \over x^2})\sqrt{1-y}+ \sin \alpha (-{1 \over x^5}+{3^{4 \over 3} \over x}))
\\ & & +(2c-2c_0)(-{13 \over x^6}+{18\sqrt[3]{3} \over x^5}-{8 \over x^3}+3+\cos^2 \alpha (1-{3^{4 \over 3} \over x^2})+\sin^2 \alpha (-{1 \over x^3}+{3^{4 \over 3} \over x^2}-2))
\\ &=:& D(x, y, \alpha)
\end{eqnarray*}
of $E(x, y, \alpha)$ by removing the degree 3, 4 terms of $\sqrt{2c-2c_0}$, because the degree 3, 4 terms are positive. Therefore it is enough to show that $D(x, y, \alpha)>0$ for all $q \in \mathfrak{R} \backslash B_{0.63}(0)$ and $\alpha \in [0, {\pi \over 2}]$. We use the variables $(x, k)$ which is given by the change of variables $y={{1+3k(3^{1 \over 3}x-1)} \over {1+k(3x^3-1)}}$ in Lemma \ref{Lem 5.2} and will denote by $D(x, k, \alpha)$ again. Recall that the decompositions (\ref{decom1 in prop 4.10}), (\ref{decom2 in prop 4.10}) in the proof of Proposition \ref{prop 4.10}, then we get the common factor $(3^{-1 \over 3}-x)^2$ in $D(x, k, \alpha)$. Precisely, we have that
\begin{eqnarray*}
D(x, k, \alpha)&=&(3^{-1 \over 3}-x)^2 ({15 \cdot 3^{2\over3} \over x^7}-{27 \over x^6}-{18 \cdot 3^{1 \over 3} \over x^5}+{5 \cdot 3^{2 \over 3} \over x^4}+{12 \over x^3}+{9 \cdot 3^{1\over 3} \over x^2})
\\ & & - \cos \alpha \Big( 2 {\sqrt{x}\sqrt{k-k^2} \over 1+k(3x^3-1)}(3^{-1 \over 3}-x)^2(3+{2 \cdot 3^{2 \over 3} \over x})({9 \over x^2}-3x) \Big)
\\ & & -\sin \alpha \Big( 2 \sqrt{{1-k \over 1+k(3x^3-1)}(3+{2 \cdot 3^{2 \over 3} \over x})} (3^{-1 \over 3}-x)^2 ({3^{1\over3} \over x^5}+{3^{2 \over 3} \over x^4}+{3 \over x^3}+{3^{4\over3} \over x^2}) \Big)
\\ & & +{1-k \over 1+k(3x^3-1)}(3^{-1 \over 3}-x)^2 (3+{2 \cdot 3^{2 \over 3} \over x}) \Big(-{13 \over x^6}+{18\sqrt[3]{3} \over x^5}-{8 \over x^3}+3
\\ & & \qquad \qquad \qquad \qquad \qquad \qquad +\cos^2 \alpha (1-{3^{4 \over 3} \over x^2})+\sin^2 \alpha (-{1 \over x^3}+{3^{4 \over 3} \over x^2}-2) \Big)
\end{eqnarray*}

We can factor out the common factor $(3^{-1 \over 3}-x)^2$ from $D(x,k,\alpha)$. Define $d(x, k, \alpha):={D(x, k, \alpha) \over (3^{-1 \over 3}-x)^2}$. In fact, functions $d, D$ are defined on $\mathfrak{R}'' \times [0, {\pi \over 2}]$ where the domain $\mathfrak{R}''=(0.54, 3^{-1 \over 3}) \times [0, 1)$ for $(x,k)$, we can extend $d$ continuously to the function on $\overline{\mathfrak{R}''} \times [0, {\pi \over 2}]$. If we prove the inequality $d(x, k, \alpha)>0$ on $(\overline{\mathfrak{R}''} \cap \{ x \ge 0.63 \}) \times [0, {\pi \over 2}]$, then we have the inequality $d(x, k, \alpha)>0$ on $(\mathfrak{R}'' \cap \{ x \ge 0.63 \}) \times [0, {\pi \over 2}]$. This implies $D(x, k, \alpha)>0$ for all $(\mathfrak{R}'' \cap \{ x \ge 0.63 \}) \times [0, {\pi \over 2}]$. Therefore, we will prove that $d(x, k, \alpha)>0$ for all $(x, k, \alpha) \in [0.63, 3^{-1 \over 3}] \times [0, 1] \times [0, {\pi \over 2}]$. We abbreviate the terms of $d$ by the functions $C_i, (i=1,2,3,4,5)$ as follows.

\begin{eqnarray*}
d(x, k, \alpha)&=&({15 \cdot 3^{2\over3} \over x^7}-{27 \over x^6}-{18 \cdot 3^{1 \over 3} \over x^5}+{5 \cdot 3^{2 \over 3} \over x^4}+{12 \over x^3}+{9 \cdot 3^{1\over 3} \over x^2})
\\ & & - \cos \alpha \Big( 2 {\sqrt{x}\sqrt{k-k^2} \over 1+k(3x^3-1)}(3+{2 \cdot 3^{2 \over 3} \over x})({9 \over x^2}-3x) \Big)
\\ & & -\sin \alpha \Big( 2 \sqrt{{1-k \over 1+k(3x^3-1)}(3+{2 \cdot 3^{2 \over 3} \over x})} ({3^{1\over3} \over x^5}+{3^{2 \over 3} \over x^4}+{3 \over x^3}+{3^{4\over3} \over x^2}) \Big)
\\ & & +{1-k \over 1+k(3x^3-1)} (3+{2 \cdot 3^{2 \over 3} \over x}) (-{13 \over x^6}+{18\sqrt[3]{3} \over x^5}-{8 \over x^3}+3)
\\ & &+\cos^2 \alpha {1-k \over 1+k(3x^3-1)} (3+{2 \cdot 3^{2 \over 3} \over x})(1-{3^{4 \over 3} \over x^2})
\\ & &+\sin^2 \alpha {1-k \over 1+k(3x^3-1)} (3+{2 \cdot 3^{2 \over 3} \over x})(-{1 \over x^3}+{3^{4 \over 3} \over x^2}-2) )
\\ &=:& C_1(x, k)-C_2(x, k)\cos \alpha -C_3(x, k)\sin \alpha +C_4(x, k)\cos^2 \alpha +C_5(x, k)\sin^2 \alpha.
\end{eqnarray*}
Namely, $C_i$ are the following functions
\begin{eqnarray*}
C_1(x, k)&=&({15 \cdot 3^{2\over3} \over x^7}-{27 \over x^6}-{18 \cdot 3^{1 \over 3} \over x^5}+{5 \cdot 3^{2 \over 3} \over x^4}+{12 \over x^3}+{9 \cdot 3^{1\over 3} \over x^2})
\\ & & +{1-k \over 1+k(3x^3-1)} (3+{2 \cdot 3^{2 \over 3} \over x}) (-{13 \over x^6}+{18\sqrt[3]{3} \over x^5}-{8 \over x^3}+3),
\\ C_2(x, k)&=&2 {\sqrt{x}\sqrt{k-k^2} \over 1+k(3x^3-1)}(3+{2 \cdot 3^{2 \over 3} \over x})({9 \over x^2}-3x),
\\ C_3(x, k)&=& 2 \sqrt{{1-k \over 1+k(3x^3-1)}(3+{2 \cdot 3^{2 \over 3} \over x})} ({3^{1\over3} \over x^5}+{3^{2 \over 3} \over x^4}+{3 \over x^3}+{3^{4\over3} \over x^2}),
\\ C_4(x, k)&=&{1-k \over 1+k(3x^3-1)} (3+{2 \cdot 3^{2 \over 3} \over x})(1-{3^{4 \over 3} \over x^2}),
\\ C_5(x, k)&=& {1-k \over 1+k(3x^3-1)} (3+{2 \cdot 3^{2 \over 3} \over x})(-{1 \over x^3}+{3^{4 \over 3} \over x^2}-2)
\end{eqnarray*}
 of $(x, k)$. We want to prove that the function $d(x, k, \alpha)$ is monotone with respect to $x$ on $[0.63, 3^{-1 \over 3}] \times [0, 1] \times [0, {\pi \over 2}]$. We will prove that ${{\partial d} \over {\partial x}}(x, k, \alpha)<0$ for all $(x, k ,\alpha) \in [0.63, 3^{-1 \over 3}] \times [0, 1] \times [0, {\pi \over 2}]$. Observe that the following Lemma.
\begin{Lem} \label{C2&C3}
The inequalities
$${{\partial C_2} \over {\partial x}}(x, k)<0, \quad {{\partial C_3} \over {\partial x}}(x, k) <0 $$
hold for all $(x, k) \in [0.63, 3^{-1 \over 3}] \times [0, 1]$.
\end{Lem}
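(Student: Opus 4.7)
The plan is to separate the $x$- and $k$-dependences in $C_2$ and $C_3$ so that the $x$-monotonicity reduces to monotonicity of an explicit function of $x$ alone on $[0.63, 3^{-1/3}]$, which can then be checked by direct computation.

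For $C_2$, I would write
\[ C_2(x,k) = \sqrt{k-k^2}\cdot\frac{A(x)}{1+k(3x^3-1)}, \qquad A(x) = \frac{6(3x+2\cdot 3^{2/3})(3-x^3)}{x^{5/2}}. \]
On the relevant interval $3x^3 \le 1$, so the denominator $1+k(3x^3-1)$ lies in $[3x^3, 1]$: it is positive, and its $x$-derivative $9kx^2$ is non-negative. Granted that $A(x) > 0$ and $A'(x) < 0$ on $[0.63, 3^{-1/3}]$, the quotient $A(x)/(1+k(3x^3-1))$ is strictly decreasing in $x$ for every fixed $k$, and multiplication by the non-negative, $x$-independent factor $\sqrt{k-k^2}$ preserves the sign, giving $\partial_x C_2 \le 0$ (strictly for $k\in(0,1)$). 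To check $A'(x) < 0$ I would apply the quotient rule and observe that $x^{7/2}A'(x)$ simplifies to a sum of four monomials in $x$ and $3^{2/3}$ each with a strictly negative coefficient on the interval, so the claim is immediate.

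For $C_3$, I would use logarithmic differentiation. Writing
\[ C_3(x,k) = 2\,\sqrt{\tfrac{1-k}{1+k(3x^3-1)}\bigl(3 + \tfrac{2\cdot 3^{2/3}}{x}\bigr)}\; B(x), \qquad B(x) = \tfrac{3^{1/3}}{x^5}+\tfrac{3^{2/3}}{x^4}+\tfrac{3}{x^3}+\tfrac{3^{4/3}}{x^2}, \]
for $k \in [0,1)$ (so $C_3 > 0$) one has
\[ \partial_x \log C_3 = -\frac{9kx^2}{2(1+k(3x^3-1))} + \frac{1}{2}\,\partial_x \log\!\bigl(3+\tfrac{2\cdot 3^{2/3}}{x}\bigr) + \partial_x \log B(x). \]
Each of the three summands is non-positive on $[0.63, 3^{-1/3}]$: the first because the numerator is non-negative and the denominator is positive; the second because $3 + 2\cdot 3^{2/3}/x$ is strictly decreasing in $x$; and the third because $B(x)$ is a positive linear combination of strictly decreasing positive functions. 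Hence $\partial_x C_3 < 0$ for $k \in [0, 1)$, while $C_3 \equiv 0$ on the edge $k=1$.

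The only step with any real computational content is verifying $A'(x) < 0$ for $C_2$, but after clearing the positive factor $x^{7/2}$ this reduces to checking a polynomial whose four coefficients all come out with the correct sign, so there is no genuine obstacle. The lemma is essentially a packaging exercise; the key recognition is that in both $C_2$ and $C_3$, the $k$-dependent factor multiplies an $x$-dependent piece that is already monotone decreasing on $[0.63, 3^{-1/3}]$, which makes the separation of variables work cleanly.
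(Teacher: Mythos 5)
Your proof is correct, and it takes a genuinely different route from the paper's. The paper computes $\partial_x C_2$ and $\partial_x C_3$ directly and exhibits the result in a form where negativity can be read off term by term (for $\partial_x C_2$, a bracket of terms each non-positive when $0\le k\le 1$ and $x\ge 0.63$). You instead separate the variables: for $C_2$ you factor out $\sqrt{k-k^2}$ and the $k$-dependent denominator and reduce everything to showing that $A(x)=6(3x+2\cdot3^{2/3})(3-x^3)/x^{5/2}$ is positive and strictly decreasing, while the denominator $1+k(3x^3-1)$ is positive with nonnegative $x$-derivative $9kx^2$; for $C_3$ you use logarithmic differentiation and observe that each of the three log-summands is manifestly nonpositive. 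Your structural argument is cleaner to verify by hand (the computation $x^{7/2}A'(x)=-81x-27x^4-6\cdot3^{2/3}x^3-90\cdot3^{2/3}$ indeed has all negative coefficients), whereas the paper's is a more brute-force expansion whose sign one has to stare at. You also correctly flag a small inaccuracy in the lemma as stated: $\partial_x C_2=0$ at $k\in\{0,1\}$ and $\partial_x C_3=0$ at $k=1$, so the strict inequalities actually degenerate to equalities on those boundary slices; this is harmless for the downstream use, where only $\partial_x C_2\le 0$, $\partial_x C_3\le 0$ (together with $0\le\cos\alpha,\sin\alpha\le 1$) are needed to bound $\partial_x d$.
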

\begin{proof}
We can easily see the first inequality
\begin{eqnarray*}
& &{{\partial C_2} \over {\partial x}}(x, k)={(3 \sqrt{k-k^2} \over x^{7 \over 2} (1+k(3 x^3-1))^2} \times
\\ & & \big[(27 x^7 k+30 \cdot 3^{2 \over 3} x^6 k-234 x^4 k-9 x^4-196 \cdot 3^{2 \over 3} x^3 k-2\cdot 3^{2 \over 3} x^3)
\\ & &+(27 x k-27 x)+(30\cdot 3^{2 \over 3} k-30\cdot 3^{2 \over 3})\big]<0
\end{eqnarray*}
from the $(\cdot)$-block and the fact $0 \le k \le 1$. The second inequality
\begin{eqnarray*}
{{\partial C_3} \over {\partial x}}(x, k)&=& 2 \left(-{4 \cdot 3^{2 \over 3} \over x^5}-{5 \cdot 3^{1 \over 3} \over x^6}-{9 \over x^4}-{6 \cdot 3^{1 \over 3} \over x^3} \right) \sqrt{{(1-k) ({2 \cdot 3^{2 \over 3} \over x}+3) \over (1+k(3 x^3-1))}}
\\ &+&{\left({3^{1 \over 3} \over x^5}+{3^{2 \over 3} \over x^4}+{3 \over x^3}+{3^{4 \over 3} \over x^2} \right)
\left( -{\left(9 x^2 (k-k^2)  \left({2 \cdot 3^{2 \over 3} \over x}+3\right)\right) \over (1+k(3 x^3-1))^2}-{2 \cdot 3^{2 \over 3} (1-k) \over x^2 (1+k(3 x^3-1))} \right) \over \sqrt{{(1-k) ({2 \cdot 3^{2 \over 3} \over x}+3) \over (1+k(3 x^3-1))}}}<0
\end{eqnarray*}
is also obvious from the above form. This completes the proof of Lemma \ref{C2&C3}.
\end{proof}
 We consider the estimate 
\begin{eqnarray*}
& & {{\partial d} \over {\partial x}}(x, k, \alpha)
\\ &=& {{\partial C_1} \over {\partial x}}(x, k)-{{\partial C_2} \over {\partial x}}(x, k) \cos \alpha -{{\partial C_3} \over {\partial x}}(x, k) \sin \alpha+{{\partial C_4} \over {\partial x}}(x, k) \cos^2 \alpha +{{\partial C_5} \over {\partial x}}(x, k) \sin^2 \alpha
\\ &\le& {{\partial C_1} \over {\partial x}}(x, k)-{{\partial C_2} \over {\partial x}}(x, k)-{{\partial C_3} \over {\partial x}}(x, k)+\max\{{{\partial C_4} \over {\partial x}}(x, k), {{\partial C_5} \over {\partial x}}(x, k)\}
\\ &=& {{\partial C_1} \over {\partial x}}(x, k)-{{\partial C_2} \over {\partial x}}(x, k)-{{\partial C_3} \over {\partial x}}(x, k)+{{\partial C_4} \over {\partial x}}(x, k) 
\end{eqnarray*}
for ${{\partial d} \over {\partial x}}(x, k, \alpha)$. The inequality is from Lemma \ref{C2&C3}. The last equality follows from the Claim.
\\
Claim: We have the inequality ${{\partial C_4} \over {\partial x}}(x, k) \ge {{\partial C_5} \over {\partial x}}(x, k)$ for all $(x, k) \in [0.63, 3^{-1 \over 3}] \times [0, 1]$.

\begin{proof}[Proof of Claim]
It is enough to show that ${1 \over 1-k} {\partial (C_4-C_5) \over \partial x}(x, k)>0$ for all $(x, k) \in [0.63, 3^{-1 \over 3}] \times [0, 1]$. 
$${C_4-C_5 \over 1-k}={1 \over {1+k(3x^3-1)}}(3+{2 \cdot 3^{2 \over 3} \over x})(3- {2 \cdot 3^{4 \over 3} \over x^2}+{1 \over x^3})$$
We differentiate it by $x$.
\begin{eqnarray*}
{\partial \over \partial x}({C_4-C_5 \over 1-k})&=&{-9kx^2 \over ({1+k(3x^3-1)})^2}(3+{2 \cdot 3^{2 \over 3} \over x})(3- {2 \cdot 3^{4 \over 3} \over x^2}+{1 \over x^3})
\\ & & +{1 \over {1+k(3x^3-1)}}(-{2 \cdot 3^{2 \over 3} \over x^2})(3- {2 \cdot 3^{4 \over 3} \over x^2}+{1 \over x^3})
\\ & & +{1 \over {1+k(3x^3-1)}}(3+{2 \cdot 3^{2 \over 3} \over x})({4 \cdot 3^{4 \over 3} \over x^3}-{3 \over x^4})
\end{eqnarray*}
One can easily see that ${\partial \over \partial x}({C_4-C_5 \over 1-k})>0$ for all $(x, k) \in [0.63, 3^{-1 \over 3}] \times [0, 1]$ from the following inequalities. 
$$3- {2 \cdot 3^{4 \over 3} \over x^2}+{1 \over x^3}<0, \quad {4 \cdot 3^{4 \over 3} \over x^3}-{3 \over x^4}>0 \quad \textrm{for all } x \in [0.63, 3^{-1 \over 3}].$$
\end{proof}

The proof of the inequality
\begin{equation} \label{final ineq}
{{\partial C_1} \over {\partial x}}(x, k)-{{\partial C_2} \over {\partial x}}(x, k)-{{\partial C_3} \over {\partial x}}(x, k)+{{\partial C_4} \over {\partial x}}(x, k) <0
\end{equation}
will be given in Appendix \ref{sec pf of ineq}. Once we prove inequality (\ref{final ineq}), then we obtain ${\partial \over \partial x}d(x, k, \alpha)<0$ for all $(x, k, \alpha) \in [0.63, 3^{-1 \over 3}] \times [0,1] \times [0, {\pi \over 2}]$. Therefore $d(x, k, \alpha) \ge d(3^{-1 \over 3}, k, \alpha)$ for all $(x, k, \alpha) \in [0.63, 3^{-1 \over 3}] \times [0,1] \times [0, {\pi \over 2}]$, in particular the inequality is strict when $x \ne 3^{-1 \over 3}$, and  so it is sufficient to prove that $d(3^{-1 \over 3}, k, \alpha) \ge 0$ for all $(k, \alpha) \in [0,1] \times [0, {\pi \over 2}]$. In fact, we have that
\begin{eqnarray*}
d(3^{-1 \over 3}, k, \alpha)&=&108+216(1-k)-144 \cdot 3^{1 \over 2} \sqrt{k-k^2} \cos \alpha-216\sqrt{1-k} \sin \alpha
\\ & & -72(1-k) \cos^2 \alpha+36(1-k) \sin^2 \alpha
\\ &=& 36[(3\sqrt{1-k}\sin \alpha-1)^2+(\sqrt{2k}-\sqrt{6(1-k)}\cos \alpha)^2].
\end{eqnarray*}
This implies the inequality $d(3^{-1 \over 3}, k, \alpha) \ge 0$ for all $(x, k, \alpha) \in [0.63, 3^{-1 \over 3}] \times [0,1] \times [0, {\pi \over 2}]$ where the equality holds if and only if $k={2 \over 3}$ and $\sin \alpha=\sqrt{{1 \over 3}}, \cos \alpha=\sqrt{{2 \over 3}}$.

We summarize the above results below
\begin{eqnarray*}
& & d(3^{-1 \over 3}, k, \alpha) \ge 0 \textrm{ for all } (k, \alpha) \in [0, 1] \times[0, {\pi \over 2}]
\\ &\Rightarrow& d(x, k, \alpha) > 0 \textrm{ for all } (x, k, \alpha) \in [0.63, 3^{-1 \over 3}) \times [0, 1] \times[0, {\pi \over 2}]
\\ &\Rightarrow& D(x, k, \alpha) > 0 \textrm{ for all } (x, k, \alpha) \in [0.63, 3^{-1 \over 3}) \times [0, 1] \times[0, {\pi \over 2}]
\\ &\Rightarrow& E(x, y, \alpha) > 0 \textrm{ for all } (x, y, \alpha) \in (\mathfrak{R}' \backslash B_{0.63}(0)) \times[0, {\pi \over 2}]
\\ &\Rightarrow& G(q, \alpha) > 0 \textrm{ for all } (q, \alpha) \in (\mathfrak{R} \backslash B_{0.63}(0)) \times[0, {\pi \over 2}]
\\ &\Rightarrow& \min_{\alpha \in [0, {\pi \over 2}]} f_q(\alpha) > 0 \textrm{ for all }  q \in \mathfrak{R} \backslash B_{0.63}(0)
\end{eqnarray*}
This implies that $\min_{|s|^2 \le 3q_1^2+{2 \over |q|}-3^{4/3}} (w(q)+s)^t \mathcal{H}(q) (w(q)+s)>0$ for all $q \in \mathfrak{R} \backslash B_{0.63}(0)$ by Proposition \ref{prop 4.5} and this proves \hyperlink{step 3}{$\bold{Step \ \ 3}$}.
\end{proof}

Therefore, we have proven \hyperlink{step 1}{$\bold{Step \ \ 1, 2, 3}$} and these cover all domain of $q$ for Theorem \ref{Thm 4.3}. As we mentioned before, Theorem \ref{Thm 4.3} implies Theorem \ref{main theorem}, which tells us the fiberwise convexity of Hill's lunar problem.

\appendix

\section{Appendix: Numerical proofs} \label{Numerical}

In the proof of Theorem \ref{Thm 4.3}, some proofs of inequalities are replaced by computer plots in order to simplify the argument. We will verify the Figures in Appendix \ref{pf of fig}. As we mentioned, we will prove inequality (\ref{final ineq}) in Appendix \ref{sec pf of ineq}. These proofs will be done by a computer program. The author want to emphasize that there has been lots of advice and help for this Appendix from Otto van Koert and referees.

\subsection{The proofs of figures} \label{pf of fig}

In this section, we will verify the graphs(Figure \ref{fig 2}, \ref{fig 5}, \ref{fig 9}, \ref{fig 10}, \ref{fig 11}, \ref{fig 12}, \ref{fig 13}, \ref{fig 14}, \ref{fig 15}, \ref{fig 16} and \ref{fig 17}) which we used to show inequalities. We rely on the computer program. Let $f: I=[a, b] \rightarrow \mathbb{R}$ be a function which we want to verify the inequality $f(x)>0$ or $f(x)<0$ for all $x \in I$.
\begin{itemize}
\item Replace the inequality by $g(x)=\pm x^n f(x)>0$.
\item Pick a small $\epsilon>0$.
\item Compute $m:=\min \{g(x)| x=a+k \epsilon, k \in \mathbb{N}, x<b\}$, in practice, its lower bound.
\item Derive an upper bound, say $B$, for $\max_{x \in I} \left| {dg \over dx}(x) \right|$.
\item Show that $m>\epsilon B$.
\end{itemize}
First three steps can be done with the following simple python program.
\begin{Verbatim}
init=a
final=b
epsilon=0.000001

def g(x):
    return "function we are interested in"
    
min=g(init)
temp=init

while temp<final:
    if min>g(temp):
        min=g(temp)
    temp+=epsilon

print(min)
\end{Verbatim}
Here, we take $\epsilon=10^{-6}$ for every $g$'s in Figure \ref{fig 2}, \ref{fig 5}, \ref{fig 9}, \ref{fig 10}, \ref{fig 11}, \ref{fig 12}, \ref{fig 13}, \ref{fig 14}, \ref{fig 15}, \ref{fig 16} and \ref{fig 17}.

\begin{center}
\begin{tabular}{ |c|c|c|c|c| } 
 \hline
Figure  & $g$ & $I$ & $m$ & $B$ \\ 
 \hline
 Fig. \ref{fig 2} & $x^7 f_2(x) $ & $[0, 0.54]$ & 0.3524 & $4 \times 10^4 $\\ 
 Fig. \ref{fig 5} & $x^7 f_5(x) $ & $[0, 0.54]$ & 0.0453 & $4 \times 10^4 $\\
 Fig. \ref{fig 9} & $x^6 f_9(x) $ & $[0.54, 3^{-1 \over 3})$ & 0.2461 & $4 \times 10^4 $\\
 Fig. \ref{fig 10} & $-x^{13} f_{10}(x) $ & $[0.56, 3^{-1 \over 3})$ & 1.8777 & $4 \times 10^4 $\\
 Fig. \ref{fig 11} & $-x^7 f_{11}(x) $ & $[0.54, 0.56]$ & 1.2197 & $4 \times 10^4 $\\
 Fig. \ref{fig 12} & $x^7 f_{12}(x) $ & $[0.54, 0.56]$ & 2.7452 & $4 \times 10^4 $\\
 Fig. \ref{fig 13} & $x^7 f^m_{13}(x), \quad x^7 f^M_{13}(x)$ & $[0.54, 0.63]$ & 2.6154, 2.9192 & $4 \times 10^4 $\\
 Fig. \ref{fig 14} & $x^7 f^m_{14}(x), \quad x^7 f^M_{14}(x)$ & $[0.54, 0.63]$ & 0.5905, 1.5023 & $4 \times 10^4 $\\
 Fig. \ref{fig 15} & $x^7 f^m_{15}(x), \quad x^7 f^M_{15}(x)$ & $[0.54, 0.63]$ & 0.5395, 0.7966 & $4 \times 10^4 $\\
 Fig. \ref{fig 16} & $x^7 f^m_{16}(x), \quad x^7 f^M_{16}(x)$ & $[0.54, 0.63]$ & 0.9569, 1.1176 & $4 \times 10^4 $\\
 Fig. \ref{fig 17} &$x^7 f^m_{17}(x), \quad x^7 f^M_{17}(x)$  & $[0.54, 0.63]$ & 0.8420, 1.5383 & $4 \times 10^4 $\\
 \hline
\end{tabular}
\end{center}

The derivative bound $B$ can be shown with the following argument. Note that every $g$ has no negative degree and consists of rationals and square roots which have no pole on each interval. We can easily see that the coefficient of $g$ is less than 40 and the highest degree is less than 10. Thus the absolute value of the coefficient of ${dg \over dx}$ is less than 400. Moreover, it is clear that each ${dg \over dx}$ has at most 100 terms. Therefore, we have that
$$\left| {dg\over dx}(x) \right|<400 \times 100=4\times 10^4.$$
Clearly, the property $\epsilon B <m$ holds for every case. This proves inequalities
$$g_i(x)>0 \textrm{ on } I_i \quad \textrm{ for all }  i=2,5,9,10,11, 12, 13, 14, 15, 16, 17.$$

\subsection{Proof of inequality (\ref{final ineq})} \label{sec pf of ineq}

We prove inequality (\ref{final ineq}) using a computer program. The strategy is basically same with Appendix \ref{pf of fig}. We will use a python program for finding the minimum on the lattice. We will use Maple program to obtain derivative bounds. This can be used to get a derivative bound in general case. Thus, we give the coding at the end of Appendix \ref{sec pf of ineq}. Let us explain the idea of the proof.
\begin{itemize}

\item $F(x, u):={{\partial C_1} \over {\partial x}}(x, k(u))-{{\partial C_2} \over {\partial x}}(x, k(u))-{{\partial C_3} \over {\partial x}}(x, k(u))+{{\partial C_4} \over {\partial x}}(x, k(u))$ where $k(u)=-2u^3+3u^2$. The new variable $u$ resolves the singularities of derivatives.

\item Pick a small $\epsilon_x>0$ and $\epsilon_u>0$.

\item Compute $M:=\max \{F(x, u)| x=0.63+m \epsilon_x<3^{-1 \over 3}, u=0+n \epsilon_u<1, (m, n) \in \mathbb{N}\times \mathbb{N}\}$, in practice, its upper bound. Check $M<0$.

\item Derive an upper bounds $B_x>\max_{(x, u) \in [0.63, 3^{-1 \over 3}] \times [0, 1]} \left| {\partial F \over \partial u}(x, u) \right|$ and
\\ $B_u>\max_{(x, u) \in [0.63, 3^{-1 \over 3}] \times [0, 1]} \left| {\partial F \over \partial x}(x, u) \right|$.

\item Show that $|M|>\epsilon_x B_x$ and $|M|>\epsilon_u B_u$.

\end{itemize}

Motivated by the derivative bounds we will derive in the following, we take the $\epsilon_x={1 \over 2.05 \times 10^5}$ and $\epsilon_u={1 \over 4.59 \times 10^4}$. The next step is can be done with the following simple code(pseudo code).

\begin{Verbatim}
init_x, final_x=0.63, 3^(-1/3)
init_u, final_u=0, 1
epsilon_x, epsilon_u= "As above"

F(x, u)="The function we want to get the maximum on the lattice."
    
Max=F(init_x, init_u)
temp_x, temp_u=init_x, init_u

while temp_x<final_x:
	while temp_u<final_u:
		if Max<F(temp_x, temp_u):
			Max=F(temp_x, temp_u)
		temp_u+=epsilon_u
	temp_x+=epsilon_x

print(Max)
\end{Verbatim}

As a result, we can get $M<-19$. We explain the procedure to get the derivative bounds $B_x$ and $B_u$. We divide the functions composing the functions $C_1, C_2, C_3$ and $C_4$ into two classes of functions, say "abstract functions", "rational functions". Define abstract functions
$$a_0(x, u)={1 \over \sqrt{1+k(u)(3x^3-1)}}, \quad a_1(u)=\sqrt{1-k(u)}, \quad a_2(u)=\sqrt{k(u)},$$
$$a_3(x)=\sqrt{3+{2 \cdot 3^{2 \over 3} \over x}}, \quad a_4(x)={9 \over x^{3 \over 2}}-3x^{3 \over 2},$$
and rational functions
$$r_0(x)={15 \cdot 3^{2\over3} \over x^7}-{27 \over x^6}-{18 \cdot 3^{1 \over 3} \over x^5}+{5 \cdot 3^{2 \over 3} \over x^4}+{12 \over x^3}+{9 \cdot 3^{1\over 3} \over x^2}, \quad r_1(x)=-{13 \over x^6}+{18\sqrt[3]{3} \over x^5}-{8 \over x^3}+3,$$
$$r_2(x)={3^{1\over3} \over x^5}+{3^{2 \over 3} \over x^4}+{3 \over x^3}+{3^{4\over3} \over x^2}, \quad r_3(x)=1-{3^{4 \over 3} \over x^2}.$$
Note that $r_i$'s do not have a positive degree term. Then we have that
$$C_1(x, u)=r_0(x)+a_0^2(x, u)a_1^2(u)a_3^2(x)r_1(x), \quad C_2(x, u)=2a_0^2(x, u)a_1(u)a_2(u)a_3^2(x)a_4(x),$$
$$C_3(x, u)=2a_0(x, u)a_1(u)a_3(x)r_2(x), \quad C_4(x, u)=a_0^2(x, u)a_1^2(u)a_3^2(x)r_3(x).$$
When we take derivatives, abstract functions are formally differentiated. Since $F(x, u)={{\partial C_1} \over {\partial x}}(x, u)-{{\partial C_2} \over {\partial x}}(x, u)-{{\partial C_3} \over {\partial x}}(x, u)+{{\partial C_4} \over {\partial x}}(x, u)$, the following functions
$$b_1=a_0, \quad b_2=\partial_x a_0, \quad b_3=\partial_u a_0, \quad b_4=\partial_{xu} a_0,\quad b_5=\partial_{xx} a_0,$$
$$b_6=a_1, \quad b_7=\partial_u a_1, \quad \quad b_8=a_2, \quad b_9=\partial_u a_2,$$
$$b_{10}=a_3,\quad  b_{11}=\partial_x a_3, \quad b_{12}=\partial_{xx} a_3, \quad \quad b_{13}=a_4, \quad b_{14}=\partial_x a_4,\quad b_{15}=\partial_{xx} a_4$$
will appear in the expansions of ${\partial F \over \partial x}$ and ${\partial F \over \partial u}$. Then we have the following expressions
\begin{equation} \label{lin comb}
{\partial F \over \partial x}=\sum_{I \subset J}{c_I^x \cdot b_I}, \quad {\partial F \over \partial u}=\sum_{I \subset J}{c_I^u \cdot b_I}
\end{equation}
where $J=\{1, 2, \cdots ,15\}$, $b_I=\prod_{i \in I} {b_i}$ and $c_I^x, c_I^u$ are monomials of non-positive degree. Note that $b_I$ can be repeated in order to have monomial coefficients. We can easily see that the inequalities
$$|a_0(x, u)|<1.2, \quad |\partial_x a_0(x, u)|<3.4, \quad |\partial_u a_0(x, u)|<0.3, \quad |\partial_{xx} a_0|<15, \quad |\partial_{xu} a_0(x, y)|<1.7,$$
$$|a_1(u)| \le 1, \quad |\partial_u a_1(u)| \le \sqrt{3}, \quad |a_2(u)| \le 1, \quad |\partial_u a_2(u)| \le \sqrt{3},$$
$$|a_3(x)| \le \sqrt{3+{2\cdot3^{2 \over 3}\over 0.63}},\quad |\partial_x a_3(x)|<1.7, \quad, |\partial_{xx} a_3(x)|<5,$$ 
$$|a_4(x)|<16.5,\quad |\partial_x a_4(x)|<46.5, \quad, |\partial_{xx} a_4(x)|<168.$$
Using triangle inequalities, we can obtain upper bounds 
$$\left|{\partial F \over \partial x}\right| \le \sum_{I \subset J}{|c_I^x \cdot b_I|}, \quad \left|{\partial F \over \partial u}\right| \le \sum_{I \subset J}{|c_I^u \cdot b_I|}.$$
We replace the abstract functions and their derivatives by their individual upper bounds obtained above. In addition we substitute $x=0.63$ into $|c_I^x|$ and $|c_I^u|$ to get upper bounds
$$\left|{\partial F \over \partial x}\right| \le B_x \cong 2.040754753 \times 10^6, \quad \left|{\partial F \over \partial u}\right| \le B_u\cong 4.580163896 \times 10^5.$$
This proves inequality (\ref{final ineq}). We leave the Maple code of this procedure for the bound of $\left|{\partial F \over \partial u}\right|$.

\begin{Verbatim}
Declare the "rational functions"  r_0(x), r_1(x), r_2(x) and r_3(x) as above:
# We do not specify the "abstract functions".
Declare C_1(x, u), C_2(x, u), C_3(x, u) and C_4(x, u) as above:

F := expand(diff(C_1, x)-(diff(C_2, x))-(diff(C_3, x))+diff(C_4, x)); 
DuF := expand(diff(F, u)); 
boundDuF := 0: 

for i to nops(DuF) do 
	tmp := subs(diff(diff(a_0(x, u), u), x) = BDxua_0, op(i, DuF)): 
	tmp := subs(diff(diff(a_0(x, u), x), x) = BDxxa_0, tmp): 
	tmp := subs(diff(a_0(x, u), u) = BDua_0, tmp): 
	tmp := subs(diff(a_0(x, u), x) = BDxa_0, tmp): 
	tmp := subs(diff(a_1(u), u) = BDua_1, tmp): 
	tmp := subs(diff(a_2(u), u) = BDua_2, tmp): 
	tmp := subs(diff(a_3(x), x) = BDxa_3, tmp):
	tmp := subs(diff(a_4(x), x) = BDxa_4, tmp):
	
	tmp := subs(a_0(x, u) = Ba_0, tmp):
	tmp := subs(a_1(u) = Ba_1, tmp):
	tmp := subs(a_2(u) = Ba_2, tmp):
	tmp := subs(a_3(x) = Ba_3, tmp):
	tmp := subs(a_4(x) = Ba_4, tmp):
	
	boundDuF := boundDuF+abs(tmp):
end do:

xmin := 0.63: 
Ba_0 := 1.2: BDxa_0 := 3.4: BDua_0 := .3: BDxxa_0 := 15: BDxua_0 := 1.7: 
Ba_1 := 1: BDua_1 := sqrt(3): Ba_2 := 1: BDua_2 := sqrt(3): 
Ba_3 := sqrt(3+2*3^(2/3)/xmin): BDxa_3 := 1.8: BDxxa_3 := 5:
Ba_4 := 16.5: BDxa_4 := 46.5: BDxxa_4 := 168: 

evalf(subs(x = xmin, boundDuF));

				4.580163896 10^5
\end{Verbatim}

Using the similar code, derivative bound 
$$2.040754753 \times 10^6$$
for $\left| {\partial F \over \partial x}\right|$ can be obtained analogously. This completes the proof of inequality (\ref{final ineq}).

\newpage


\begin{thebibliography}{99}

\bibitem{ABT} J. C. \'Alvarez Paiva, F. Balacheff and K. Tzanev, \emph{Isosystolic inequalities for optical hypersurfaces}, 2013, arXiv:1308.5522v1.

\bibitem{AFFHvK} P. Albers, J. W. Fish, U. Frauenfelder, H. Hofer and O. van Koert, \emph{Global surfaces of section in the planar restricted 3-body problem},  Arch. Ration. Mech. Anal. 204 (2012), no. 1, 273--284.

\bibitem{AFFvK} P. Albers, J. W. Fish, U. Frauenfelder and O. van Koert, \emph{The Conley-Zehnder indices of the rotating Kepler problem},  Math. Proc. Cambridge Philos. Soc. 154 (2013), no. 2, 243--260.

\bibitem{CFvK} K. Cieliebak, U. Frauenfelder and O. van Koert, \emph{The Finsler geometry of the rotating Kepler problem}, Publ. Math. Debrecen 84 (2014), no. 3-4, 333--350.

\bibitem{Eli1} Y. Eliashberg, \emph{Lectures on symplectic topology in Cala, Gonone, Basic notions, problems and some methods}, Conference on Differential Geometry and Topology(Sardinia, 1988). Rend. Sem. Fac. Sci. Univ. Cagliari 58 (1988), suppl., 27-49.

\bibitem{Eli2} Y. Eliashberg, \emph{Contact 3-manifolds twenty years since J. Martinet's work}, Ann. Inst. Fourier 42 (1992), 165-192.

\bibitem{Gro} M. Gromov, \emph{Pseudoholomorphic curves in symplectic manifolds}, Invent. Math. 82 (1985), no. 2, 307-347.

\bibitem{Hill} G. W. Hill, \emph{Researches in the lunar theory}, American Journal of Mathematics Vol. 1 (1878), 5-26, 129-147.

\bibitem{HWZ} H. Hofer, K. Wysocki and E. Zehnder, \emph{The dynamics on three-dimensional strictly convex energy surfaces},  Ann. of Math. (2) 148 (1998), no. 1, 197--289.

\bibitem{Kim} D. Kim, \emph{Planar circular restricted three body problem}, 2011, M.A. Thesis -Seoul National University.

\bibitem{LR} J. Llibre, L. A. Roberto, \emph{On the periodic orbits and the integrability of the regularized Hill lunar problem},  J. Math. Phys. 52 (2011), no. 8, 082701, 8 pp.

\bibitem{MHO} K. R. Meyer, G. Hall, D. Offin, \emph{Introduction to Hamiltonian dynamical systems and the N-body problem}, Second edition. Applied Mathematical Sciences, 90. Springer, New York, 2009. xiv+399 pp. ISBN: 978-0-387-09723-7

\bibitem{MIW}  E. Meletlidou, S. Ichtiaroglou, F. J. Winterberg, \emph{Non-integrability of Hill's lunar problem},  Celestial Mech. Dynam. Astronom. 80 (2001), no. 2, 145--156.

\bibitem{MS} K. R. Meyer and D. S. Schmidt, \emph{Hill's lunar equations and the three-body problem},  Journal of Differential Equations Vol. 44 (1982), 263--272.

\bibitem{MSS} J. J. Morales-Ruiz, C. Sim\'o, and S. Simon, \emph{Algebraic proof of the non-integrability of Hill's problem}, Ergodic Theory Dynam. Systems 25 (2005), no. 4, 1237--1256.

\bibitem{Moser} J. Moser, \emph{Regularization of Kepler's problem and the averaging method on a manifold},  Comm. Pure Appl. Math. 23 (1970), 609--636.

\bibitem{SS} C. Sim\'o and T. J. Stuchi \emph{Central stable/unstable manifolds and the destruction of KAM tori in the planar Hill problem},  Phys. D 140 (2000), no. 1-2, 1--32.

\end{thebibliography}
\end{document}